\title{On the Singer functor $R_1$ and the functor $\fix$}
\author[Geoffrey Powell]{Geoffrey M.L. Powell}
\address{Laboratoire Analyse, Géométrie et Applications, UMR 7539\\ Institut
Galilée, Université Paris 13, 93430 Villetaneuse, France}
\email{powell@math.univ-paris13.fr} 
\keywords{Singer functor -- unstable module -- Lannes' $T$-functor}
\subjclass[2000]{Primary 55S10; Secondary 18E10}
\thanks{This work was partly financed by the project  ANR
BLAN08-2 338236, HGRT.
\newline
\indent
The author would like to thank Jean Lannes for observing that the
copresentation of the functor $R_1$ should lead to a conceptual calculation of
$\fix R_1$ and Aurélien Djament for some helpful comments.}
\newtheorem{thm}{Theorem}[section]
\newtheorem{prop}[thm]{Proposition}
\newtheorem{cor}[thm]{Corollary}
\newtheorem{lem}[thm]{Lemma}
\newtheorem{THM}{Theorem}
\theoremstyle{definition}
\newtheorem{defn}[thm]{Definition}
\newtheorem{exam}[thm]{Example}
\theoremstyle{remark}
\newtheorem{rem}[thm]{Remark}
\newtheorem{nota}[thm]{Notation}
\newcommand{\usquare}{{\field [u].u^2}}
\newcommand{\rhoprime}{\tilde{\rho}^K}
\newcommand{\indec}{Q}
\newcommand{\rprime}{\rtilde^{K}}
\newcommand{\fix}{\mathrm{Fix}}
\newcommand{\unstproj}{\unst_{\mathrm{Proj}}}
\newcommand{\unstred}{{\unst_{\mathrm{red}}}}
\renewcommand{\phi}{\varphi}
\renewcommand{\epsilon}{\varepsilon}
\newcommand{\symm}{\mathfrak{S}}
\newcommand{\zed}{\mathbb{Z}}
\newcommand{\unst}{\mathscr{U}}
\newcommand{\st}{\mathrm{St}}
\newcommand{\dash}{\hspace{-2pt}-\hspace{-2pt}}
\newcommand{\unstalg}[1][]{{\mathscr{K}_{#1}}}
\newcommand{\rtilde}{\tilde{R}}
\newcommand{\redT}{\overline{T}}
\newcommand{\field}{\mathbb{F}}
\newcommand{\kfield}{\mathbb{K}}
\newcommand{\cala}{\mathcal{A}}
\begin{document}

\begin{abstract}
Lannes' $T$-functor is used to give a construction of the
 Singer functor $R_1$ on the category $\unst$ of unstable modules over the
 Steenrod algebra $\cala$. This leads to a direct proof that the
composite functor $\fix R_1$ is naturally equivalent to the identity. Further
properties of the functors $R_1$ are deduced, especially when applied to
reduced and nilclosed unstable modules.
\end{abstract}
\maketitle

\section{Introduction}
\label{sect:intro}

The purpose of this paper is to make explicit the relation between the 
Singer functor $R_1$  and  Lannes' $T$-functor and to deduce some immediate
consequences. The functor
$R_1 : \unst \rightarrow \field[u]\dash \unst$ takes values in the category
 of $\field [u]$-modules in unstable modules, where $\field$ is taken to
be $\field_2$ (analogous results hold at odd primes). The Singer functor is an
important tool, which arose in the work of Singer on the cohomology of the
Steenrod algebra.

The functor $R_1$ has remarkable properties, in particular it preserves
nilclosed unstable modules. This means that it is amenable to study by
$T$-functor technology, which  provides a first approximation, $\rtilde_1$, to
the
functor $R_1$, where $\rtilde_1 M$ is defined as the kernel of a natural
morphism $\field
[u] \otimes M \rightarrow \overline{\field[u]} \otimes \redT M$. The main
result is summarized by the following, in which $\unstproj$ denotes the
full subcategory of projective unstable modules.

\begin{THM}
\label{THM:R1-rtilde}
 The functor $\rtilde_1: \unst \rightarrow \field [u]\dash \unst$ satisfies the
following properties:
\begin{enumerate}
 \item 
the functor $\rtilde_1$ is left exact;
\item
the functor $\rtilde_1$ coincides with $R_1$ on nilclosed unstable modules;
\item
the functor $R_1$ is naturally equivalent to $\rprime_1$, the left Kan
extension of the functor $\rtilde_1 |_{\unstproj}$, hence 
there is a natural transformation $R_1 \cong \rprime_1 \rightarrow \rtilde_1$.
\end{enumerate}
\end{THM}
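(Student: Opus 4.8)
The plan is to prove the three properties in turn. Throughout, write $T$ for Lannes' functor for $\field_2$, the left adjoint of $\field[u]\otimes(-)$, so that $TM\cong M\oplus\redT M$ naturally, and recall that $\rtilde_1 M=\ker\bigl(\field[u]\otimes M\to\overline{\field[u]}\otimes\redT M\bigr)$. For property~(1), the functor $\field[u]\otimes(-)$ is exact, and so is $\overline{\field[u]}\otimes\redT(-)$: tensoring over $\field$ with $\overline{\field[u]}$ is exact, and $\redT$ is exact, being a natural direct summand of $T$, which is exact by Lannes' theorem. Thus $\rtilde_1$ is the kernel of a natural transformation between exact functors $\unst\to\field[u]\dash\unst$; applying the two exact rows it produces to a short exact sequence in $\unst$ and running the usual diagram chase (snake lemma) shows the induced sequence of kernels is exact at its first two terms, i.e.\ $\rtilde_1$ is left exact.

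For property~(2), I would first construct a natural monomorphism $R_1 M\hookrightarrow\rtilde_1 M$. Using the classical realization of $R_1 M$ as the $\field[u]$-submodule of $\field[u]\otimes M$ generated by the total Steenrod powers $\st(x)$, $x\in M$, it suffices to check that the composite $R_1 M\hookrightarrow\field[u]\otimes M\to\overline{\field[u]}\otimes\redT M$ kills each $\st(x)$; as the second map is built from the $T$-functor adjunction, this is a direct computation with the total power. It then remains to see that this monomorphism is onto when $M$ is nilclosed — equivalently, that the copresentation of $R_1 M$ has first cosyzygy term $\overline{\field[u]}\otimes\redT M$ and is exact for such $M$. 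Here I would use that both $R_1$ (which is exact) and $T$ preserve nilclosedness, choose a copresentation $0\to M\to I^0\to I^1$ of $M$ by reduced (hence nilclosed) injectives, apply the left exact functor $\rtilde_1$ and the exact functor $R_1$, and reduce, via the monomorphisms just built and a diagram chase, to the case of the basic reduced injectives, where the $T$-functor construction of $R_1$ applies transparently. I expect this last step to be the main obstacle of the whole theorem: one must verify, on the basic nilclosed modules, that the copresentation of $R_1$ coincides with the two-term sequence defining $\rtilde_1$, so that nothing survives beyond the kernel; this is the precise point at which ``nilclosed'' rather than merely ``reduced'' is used, and at which the structure of $\redT$ on nilclosed modules enters.

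For property~(3), the classical $R_1$ is right exact — it preserves epimorphisms, since $\st(x')$ on a quotient lifts to $\st(x)$ upstairs, and it is additive — and it commutes with arbitrary direct sums. The left Kan extension $\rprime_1:=\mathrm{Lan}_{\unstproj\hookrightarrow\unst}\bigl(\rtilde_1|_{\unstproj}\bigr)$ restricts back to $\rtilde_1$ on $\unstproj$, the inclusion being fully faithful. The essential point is that $\rtilde_1$ and $R_1$ already agree on $\unstproj$: both commute with direct sums and with retracts (for $\rtilde_1$, each of $\field[u]\otimes(-)$, $\redT$ and kernels does so in the Grothendieck category $\unst$), so it suffices to treat the free modules $F(n)$; each $F(n)$ is reduced, hence embeds in a reduced injective $J$, and since $\rtilde_1$ is left exact and $R_1$ is exact, both exhibit the value at $F(n)$ as the intersection inside $\field[u]\otimes J$ of the common value at $J$ (equal by~(2), $J$ being nilclosed) with $\field[u]\otimes F(n)$; hence $R_1 F(n)=\rtilde_1 F(n)$. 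Therefore $\rprime_1=\mathrm{Lan}_{\unstproj\hookrightarrow\unst}\bigl(R_1|_{\unstproj}\bigr)$, and since $R_1$ is right exact and $\unst$ has enough projectives, the canonical comparison $\rprime_1\to R_1$ is an isomorphism. Finally, the natural transformation $R_1\cong\rprime_1\to\rtilde_1$ is the one corresponding under the Kan-extension adjunction to $\mathrm{id}_{\rtilde_1|_{\unstproj}}$; being a natural transformation out of the right exact functor $R_1$, it is determined by its restriction to $\unstproj$, where it is the identification just obtained, so it agrees with the monomorphism of~(2) and is an isomorphism exactly on the nilclosed modules.
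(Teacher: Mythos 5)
Your argument for left exactness (kernel of a natural transformation between exact functors, via the snake lemma) is correct and essentially equivalent to the paper's, and your treatment of~(3) by Kan extension from $\unstproj$ — once~(2) is in hand and one recalls that projectives are nilclosed — matches the paper's formal argument (your detour through reduced injectives and the intersection property for the $F(n)$ is unnecessary: just cite~(2) directly).

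The genuine gap is in~(2), and you flag it yourself: you reduce, via an injective copresentation $0\to M\to I^0\to I^1$ by nilclosed injectives and the five-lemma, to verifying that $R_1$ and $\rtilde_1$ agree on the basic injectives $H^*(V)$, and then write that you ``expect this last step to be the main obstacle'' without performing it. This verification is exactly the content of the paper's Lemma~\ref{lem:rtilde-injectives}: one identifies $\field[u]\otimes H^*(V)\cong H^*(V\oplus\field)$, computes that $\sigma$ is the identity on each component while the $v$-component of $\tau$ is the automorphism of $H^*(V\oplus\field)$ induced by $1\mapsto(v,1)$, and recognizes the resulting equalizer as $H^*(V\oplus\field)^{G_V}$, which is precisely the Lannes--Zarati description of $R_1 H^*(V)$ from Proposition~\ref{prop:R1-injectives}. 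Without this computation nothing else closes up, since~(3) also rests on the agreement at projectives established by~(2). Your proposed shortcut of first producing a natural monomorphism $R_1 M\hookrightarrow\rtilde_1 M$ for \emph{all} $M$ by checking $\overline{\tau}_M$ kills each $\st_1(x)$ is likewise left as an assertion: it requires an explicit handle on the adjunction unit $M\to\field[u]\otimes TM$, which is not a straightforward formula in general, and the paper avoids this by only ever evaluating both functors on $H^*(V)$ and then propagating by (co)presentations. Finally, two small inaccuracies: your claim that the comparison $\rprime_1\to\rtilde_1$ is an isomorphism ``exactly on the nilclosed modules'' is false (Theorem~\ref{THM:reduced} extends it to all reduced modules), and the remark that nilclosedness ``enters at the basic injective computation'' is slightly misplaced — the $H^*(V)$ are automatically nilclosed; nilclosedness of $M$ is what guarantees the copresentation can be taken by nilclosed injectives.
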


The identification of $\rprime_1$ with $R_1$ depends on
 properties of the Singer functor $R_1$; this result is not used
in the proofs of the subsequent results of the paper, which can be interpreted
as results for $R_1$, via Theorem \ref{THM:R1-rtilde}.

The functor $\fix : \field[u] \dash \unst \rightarrow \unst$ is the left
adjoint to the functor $\field [u] \otimes - : \unst \rightarrow \field [u]
\dash \unst$. The above leads to a direct proof of the following result. 

\begin{THM}
\label{THM:fix}
 Let $M$ be an unstable module. There is a natural isomorphism 
\[
 \fix \rprime_1 M \stackrel{\cong}{\rightarrow} M.
\]
\end{THM}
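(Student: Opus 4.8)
The plan is to combine two structural facts. Since $\fix$ is a left adjoint it preserves all colimits, and by Theorem~\ref{THM:R1-rtilde}(3) the functor $\rprime_1$ is the left Kan extension of $\rtilde_1|_{\unstproj}$ along the inclusion $\iota\colon\unstproj\hookrightarrow\unst$. A pointwise left Kan extension along $\iota$ is a colimit over the category of projective unstable modules mapping to a given module, and $\fix$ commutes with such colimits, so there is a natural isomorphism $\fix\rprime_1\cong\mathrm{Lan}_\iota\bigl(\fix\circ(\rtilde_1|_{\unstproj})\bigr)$. On the other hand, since $\unst$ has enough projectives, every unstable module is the canonical colimit of the diagram of projective modules over it; that is, $\iota$ is dense and $\mathrm{Id}_{\unst}\cong\mathrm{Lan}_\iota\iota$. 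Hence it suffices to produce a natural isomorphism $\fix\rtilde_1 P\cong P$ for $P$ in $\unstproj$ and then pass to left Kan extensions. Because $\field[u]\otimes-$, $\redT$ (a natural direct summand of the colimit-preserving functor $T$), the formation of kernels, and $\fix$ all commute with direct sums, so does $\fix\rtilde_1$, and one is reduced to treating the free unstable modules $P=F(n)$.

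For $P=F(n)$ I would feed the defining copresentation
\[
0\longrightarrow\rtilde_1 F(n)\longrightarrow\field[u]\otimes F(n)\stackrel{\alpha}{\longrightarrow}\overline{\field[u]}\otimes\redT F(n)
\]
into $\fix$. The basic computational input is the identification $\fix(\field[u]\otimes N)\cong TN$, where $T$ denotes Lannes' $T$-functor (so $TN\cong N\oplus\redT N$); this is merely a reformulation of the adjunction $\hom_\unst(TN,N')\cong\hom_\unst(N,\field[u]\otimes N')$, using that a $\field[u]$-linear morphism out of $\field[u]\otimes N$ is determined by an $\cala$-linear map on $1\otimes N$. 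In particular $\fix(\field[u]\otimes F(n))\cong TF(n)\cong F(n)\oplus\redT F(n)$, with $F(n)$ occurring as a natural direct summand. It then remains to compute $\fix$ on (the image of $\alpha$ inside) the ``non-extended'' object $\overline{\field[u]}\otimes\redT F(n)$ — which is free as a $\field[u]$-module but not of the form $\field[u]\otimes(-)$ — and to identify the morphism $\fix\alpha$. The expectation is that, under these identifications, $\fix\alpha$ vanishes on the summand $F(n)$ of $TF(n)$ and is injective on the complementary summand, so that $\ker\fix\alpha$ is exactly $F(n)$; applying $\fix$ to the copresentation and using left exactness then gives $\fix\rtilde_1 F(n)\cong F(n)$, the isomorphism being induced from the inclusion $\rtilde_1 F(n)\hookrightarrow\field[u]\otimes F(n)$. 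One finally checks naturality in $F(n)$, extends over $\unstproj$ by additivity, and takes left Kan extensions to conclude.

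The main obstacle is the analysis of $\fix$ on modules of the form $\overline{\field[u]}\otimes W$ and the concomitant identification of the induced map $\fix\alpha$: this is precisely the ``conceptual calculation of $\fix R_1$'' flagged in the acknowledgements, and it is the place where the particular shape of the copresentation — hence of Lannes' $T$-functor on $\field[u]=H^*(B\field)$ — enters. A secondary, technical point is that $\fix$, as a left adjoint, is a priori only known to be right exact; one must therefore either establish that $\fix$ is exact (an analogue of the exactness of Lannes' $T$-functor), or control the first left derived functor of $\fix$ on the terms of the copresentation, or argue directly — exploiting the left exactness of $\rtilde_1$ from Theorem~\ref{THM:R1-rtilde}(1) — that the natural map $\fix\rtilde_1 F(n)\to\fix(\field[u]\otimes F(n))\cong TF(n)$ is a monomorphism, so that $\fix$ applied to the copresentation stays left exact and $\fix\rtilde_1 F(n)=\ker\fix\alpha$.
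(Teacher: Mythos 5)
Your overall architecture is sound and parallel to the paper's: you reduce the question to projectives, compute $\fix\rtilde_1$ there, and then pass back via Kan extension, whereas the paper first establishes $\fix\rtilde_1 M\cong M$ for \emph{all} $M$ (Proposition~\ref{prop:fix-rtilde}) and then compares $\fix\rprime_1$ with $\fix\rtilde_1$ via a projective presentation and the five-lemma. These are essentially the same reduction, and the extra Kan-extension bookkeeping you invoke is not needed once one notices the paper's stronger intermediate statement. A minor point: the exactness of $\fix$ that you worry about is a standard theorem in Lannes' theory (a left adjoint which is also exact, much as $T$ is), and the paper uses it without comment; you need not control derived functors.

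The genuine gap is that the computational heart of the theorem is left as an ``expectation.'' You correctly identify that everything hinges on computing $\fix$ applied to the map out of $\field[u]\otimes M$ in the defining copresentation, but you do not carry this out. This is precisely Lemma~\ref{lem:fix-tau} in the paper, and it is where the content lives. The paper's trick is to work not with $\overline{\tau}_M\colon\field[u]\otimes M\to\overline{\field[u]}\otimes\redT M$ (as you do) but with the equivalent equalizer form $\rtilde_1 M=\ker(\sigma_M,\tau_M\colon\field[u]\otimes M\rightrightarrows\field[u]\otimes TM)$, where both sides are extended modules and $\fix(\field[u]\otimes TM)\cong T^2M$ immediately. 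The key observation is then that the degree-zero part of $T\field[u]$ is the Boolean algebra $\field^\field$, so $T\sigma_M$, $T\tau_M$ decompose into components indexed by $w\in\field$; the $w$-component of $\tau_M$ is induced by $1\mapsto(1,w)\colon\field\to\field^2$, while that of $\sigma_M$ is induced by the first-factor inclusion. Passing to $\fix$ picks out the $w=1$ component, and one recognizes the resulting equalizer as the \emph{split} equalizer
\[
M\cong T_{\field^0}M\longrightarrow T_\field M\rightrightarrows T_{\field^2}M,
\]
split by the maps induced by $\field\to\field^0$ and the projection $p_2\colon\field^2\to\field$. Without this identification your argument does not close; the ``expectation'' that $\fix\alpha$ kills the $F(n)$-summand and is injective on the complement is exactly what must be proved, and the Boolean-algebra/split-equalizer computation is how the paper proves it.

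Two smaller remarks. First, your reduction to $F(n)$ via additivity is unnecessary: the split-equalizer argument is uniform in $M$ and yields $\fix\rtilde_1 M\cong M$ directly, with no need to special-case projectives. Second, your concern about $\fix$ on $\overline{\field[u]}\otimes W$ is resolvable (by exactness of $\fix$ and the vanishing of $\fix$ on modules with trivial $u$-action, one gets $\fix(\overline{\field[u]}\otimes W)\cong TW$), but this still leaves the identification of the induced map, which is why the paper prefers the $\sigma,\tau$ form from the start.
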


This is then used used to show:

\begin{THM}
\label{THM:reduced}
Let $M$ be a reduced unstable module, then the natural transformation
\[
 \rprime_1 M \rightarrow \rtilde_1 M  
\]
is an isomorphism.
\end{THM}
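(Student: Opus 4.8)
The plan is to read the statement through Theorem~\ref{THM:R1-rtilde}(3): since $\rprime_1\cong R_1$, I must show that the comparison morphism $\nu_M\colon R_1M\to\rtilde_1M$ supplied there is an isomorphism whenever $M$ is reduced. The facts I would feed in are that $R_1$ is exact (the standard exactness of the Singer functor), that $\rtilde_1$ is only left exact (Theorem~\ref{THM:R1-rtilde}(1)), that $\nu$ is an isomorphism on nilclosed modules (Theorem~\ref{THM:R1-rtilde}(2)), and — the crucial point — that $\nu$ is a monomorphism on all of $\unst$, which is read off the copresentation of $R_1$: the Singer construction realises $R_1N$ as a natural submodule of $\field[u]\otimes N$ refining the tautological inclusion $\rtilde_1N\hookrightarrow\field[u]\otimes N$ that defines $\rtilde_1$.

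Granting this, the argument goes as follows. Since $M$ is reduced, the kernel of the canonical map $i\colon M\to\ell_0M$ to its nilclosure is the maximal nilpotent submodule of $M$ and hence is zero, so one has a short exact sequence $0\to M\xrightarrow{i}\ell_0M\xrightarrow{p}Q\to 0$ with $\ell_0M$ nilclosed and $Q$ nilpotent; this is the only place reducedness enters. Applying the exact functor $R_1$ and the left exact functor $\rtilde_1$ and comparing along $\nu$ produces a commutative diagram with exact top row $0\to R_1M\to R_1\ell_0M\xrightarrow{R_1p}R_1Q\to 0$, bottom row $0\to\rtilde_1M\to\rtilde_1\ell_0M\xrightarrow{\rtilde_1p}\rtilde_1Q$ exact on the left, and isomorphism $\nu_{\ell_0M}$ in the middle. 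Naturality of $\nu$ along $i$, together with $R_1i$ injective and $\nu_{\ell_0M}$ an isomorphism, forces $\ker\nu_M=0$; and a short computation with the isomorphism $\nu_{\ell_0M}$ and the surjectivity of $R_1p$ identifies $\mathrm{coker}\,\nu_M$ with $\ker\nu_Q$. Since $Q$ is nilpotent, $\nu_Q$ is a monomorphism by the displayed property of $\nu$, so $\mathrm{coker}\,\nu_M=0$ and $\nu_M$ is an isomorphism.

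The step I expect to be the real work is the injectivity of $\nu$ on all of $\unst$ (equivalently, the natural embedding $R_1N\hookrightarrow\field[u]\otimes N$): it does not follow formally — the Kan extension property only gives that $\nu$ is an isomorphism on projectives — and has to come from the construction of $R_1$, which I would expect to be available from the sections setting up the copresentation. An alternative route that instead uses Theorem~\ref{THM:fix} directly, and avoids an explicit appeal to the Singer construction, is to obtain $\ker\nu_M=0$ as above, put $C=\mathrm{coker}\,\nu_M$, and apply $\fix$ to $0\to R_1M\to\rtilde_1M\to C\to 0$: since $\fix R_1M=\fix\rprime_1M\cong M$ by Theorem~\ref{THM:fix}, one is reduced to checking that $\fix$ carries $R_1M\hookrightarrow\rtilde_1M$ to an isomorphism $M\xrightarrow{\cong}\fix\rtilde_1M$, for which one computes $\fix\rtilde_1M$ from the copresentation $0\to\rtilde_1M\to\field[u]\otimes M\xrightarrow{\theta_M}\overline{\field[u]}\otimes\redT M$ using the behaviour of $\fix$ on free and on trivial $\field[u]$-modules together with the vanishing of $\fix(\theta_M)$ for reduced $M$ (a naturality argument along $i$); then $\fix C=0$, whence $C=0$ because $\fix$ detects the zero object. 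In that route the delicate input is the precise control of $\fix$ on these objects rather than the injectivity of $\nu$.
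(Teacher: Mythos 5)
Your two routes have different statuses, and neither coincides with the paper's argument, so let me address them separately and then say what the paper actually does.

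Your first route is a genuine alternative. The snake-lemma reduction along $0\to M\to \ell_0 M\to Q\to 0$ (nilclosure with nilpotent cokernel $Q$) is correct: exactness of $R_1$, left exactness of $\rtilde_1$, and the isomorphism on the nilclosed middle term give $\ker(\rprime_1 M\to\rtilde_1 M)=0$ and identify $\mathrm{coker}(\rprime_1 M\to\rtilde_1 M)$ with $\ker(\rprime_1 Q\to\rtilde_1 Q)$. The load-bearing claim is exactly the one you flag: that $\rprime_1\to\rtilde_1$ is a natural monomorphism on \emph{all} of $\unst$. The paper's Remark following Lemma \ref{lem:rprime-rtilde} explicitly warns that this is not formal, and the paper never proves it in this generality; it is true, but only via Theorem \ref{thm:rprime-R} plus a compatibility check that the isomorphism $\rprime_1\cong R_1$ carries the composite $\rprime_1\to\rtilde_1\hookrightarrow\field[u]\otimes-$ to the Singer embedding $R_1\hookrightarrow\field[u]\otimes-$ (this can be extracted from the proofs of Theorems \ref{thm:Rone-rtilde} and \ref{thm:rprime-R} since both functors carry natural inclusions into $\field[u]\otimes-$ that agree on projectives). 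So your first route is sound but rests squarely on the identification with $R_1$ and its properties, which is precisely what the section in the paper is written to avoid; what it buys is a clean one-step reduction to the injectivity statement, what it costs is independence from Singer's construction.

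Your second route contains a genuine error. The conclusion \emph{``$C=0$ because $\fix$ detects the zero object''} is false in $\field[u]\dash\unst$: the paper recalls (from Lannes--Zarati) that $\fix N=0$ if and only if $N[u^{-1}]=0$, i.e.\ $\fix$ only detects $u$-torsion. Any nonzero unstable module regarded as a $\field[u]$-module via the augmentation is a counterexample. To finish, one must know in addition that $C=\rtilde_1 M/\rprime_1 M$ is $u$-torsion free; this is exactly what Proposition \ref{prop:red-free-R1}(3) supplies (the cokernel $(\field[u]\otimes M)/\rprime_1 M$ is $\field[u]$-free, hence so is its submodule $C$), via the computation $\indec\rprime_1 M\cong\Phi M$ and injectivity of $Sq_0$ on reduced $M$. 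That is the paper's actual proof: establish the monomorphism and the freeness of the cokernel from Proposition \ref{prop:red-free-R1}, apply $\fix$ and Theorem \ref{thm:fix-rprime} to get $\fix C=0$, hence $C[u^{-1}]=0$, and conclude $C=0$ from freeness. Your second route cannot avoid this freeness input, so it does not bypass Proposition \ref{prop:red-free-R1} as you suggest.
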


With this result in hand, all properties of the functor $\rprime_1$ can be
deduced from properties of the functor $\rtilde_1$, which are transparent when
considered modulo nilpotent unstable modules. 

When $M$ is a nilclosed unstable module, the techniques used to prove
Theorem \ref{THM:reduced} provide more information:

\begin{THM}
\label{THM:nilclosed}
 Let $M$ be a nilclosed unstable module. Then 
there is a natural exact sequence in
$\field [u]\dash \unst$:
\begin{eqnarray*}
 0
\rightarrow
\rprime_1 M
\rightarrow 
\field [u] \otimes M 
\rightarrow 
\overline{\field[u]} \otimes \redT M 
\rightarrow C_2 M
\rightarrow 
0
\end{eqnarray*}
such that:
\begin{enumerate}
\item
applying the indecomposables functor $\indec  : \field [u] \dash \unst
\rightarrow \unst$
 induces an exact sequence 
\[
 0
\rightarrow 
\Phi M 
\rightarrow 
M 
\rightarrow 
\Sigma \redT M
\rightarrow 
\Sigma (M: \usquare)
\rightarrow 
0;
\]
\item
applying the functor $\fix  : \field [u] \dash \unst \rightarrow \unst$  induces
an exact sequence 
\[
 0
\rightarrow 
M
\rightarrow 
TM 
\rightarrow 
T \redT M 
\rightarrow 
\redT^2 M
\rightarrow 0.
\]
\end{enumerate}
Moreover, the underlying $\field [u]$-module of $C_2 M$ is free.
\end{THM}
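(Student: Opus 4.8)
The plan is to read the first row of the displayed sequence as a \emph{definition} and then feed it through the two functors $\indec$ and $\fix$, using throughout that $\field[u]$ (with $u$ in degree $1$) is a graded principal ideal domain, so that a bounded-below graded $\field[u]$-module is free precisely when it is $u$-torsion free, i.e.\ precisely when $\mathrm{Tor}^{\field[u]}_1(\field,-)$ vanishes on it. For $M$ nilclosed, Theorem~\ref{THM:R1-rtilde} identifies $\rprime_1 M$ with $\rtilde_1 M$, which by construction is the kernel of the structural natural transformation $\psi_M : \field[u]\otimes M \to \overline{\field[u]}\otimes\redT M$ furnished by the copresentation of $R_1$; so I would set $C_2 M := \mathrm{coker}\,\psi_M$, which at once yields the asserted natural four-term exact sequence. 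Writing $JM := \mathrm{im}\,\psi_M$ factors it as two short exact sequences $0\to\rprime_1 M\to\field[u]\otimes M\to JM\to 0$ and $0\to JM\to\overline{\field[u]}\otimes\redT M\to C_2 M\to 0$ in $\field[u]\dash\unst$, in which $\field[u]\otimes M$ and $\overline{\field[u]}\otimes\redT M$ are free $\field[u]$-modules (the latter because $\overline{\field[u]}$ is free of rank one over $\field[u]$); in particular $\rprime_1 M$, being a sub-$\field[u]$-module of a free module over the principal ideal domain $\field[u]$, is itself free.

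For part~(1) I would apply the right exact functor $\indec = \field\otimes_{\field[u]}-$, whose only nonvanishing higher derived functor is $L_1 = \mathrm{Tor}^{\field[u]}_1(\field,-)$. From $\indec(\field[u]\otimes N) = N$, $\indec(\overline{\field[u]}\otimes N) = \Sigma N$, and the vanishing of $\mathrm{Tor}^{\field[u]}_1(\field,-)$ on the free modules above, the long exact sequences give $\mathrm{Tor}^{\field[u]}_1(\field,JM) = 0$ and $\indec JM \cong M/\indec\rprime_1 M$. The substance is then to identify the terms explicitly: $\indec\rprime_1 M \cong \Phi M$ with $\Phi M\to M$ the canonical map $\lambda_M$ (the indecomposables of the Singer functor $R_1$, read off the copresentation; $\lambda_M$ is a monomorphism since a nilclosed module is reduced, consistently with exactness on the left), and the induced map $\indec\psi_M : M\to\Sigma\redT M$ factors, via the fundamental exact sequence $0\to\Phi M\xrightarrow{\lambda_M}M\to\Sigma\Omega M\to 0$ for reduced $M$, as $M\twoheadrightarrow\Sigma\Omega M\hookrightarrow\Sigma\redT M$, with cokernel $\Sigma(M:\usquare)$. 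Granting this, $\ker\indec\psi_M = \Phi M = \mathrm{im}(\indec\rprime_1 M\to M)$ produces the exact sequence of part~(1); and, since it shows $\indec JM\hookrightarrow\Sigma\redT M$ is a monomorphism, the $\mathrm{Tor}$ long exact sequence of $0\to JM\to\overline{\field[u]}\otimes\redT M\to C_2 M\to 0$ forces $\mathrm{Tor}^{\field[u]}_1(\field,C_2 M) = \ker(\indec JM\to\Sigma\redT M) = 0$. Hence $C_2 M$ is $u$-torsion free, so free over $\field[u]$, and then $JM$ is free as well. Once all four modules are $u$-torsion free, applying $\indec$ to the four-term sequence is exact and gives precisely the sequence of part~(1).

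For part~(2) I would apply $\fix$. Since $\fix$ is left adjoint to $\field[u]\otimes-:\unst\to\field[u]\dash\unst$ while $\field[u]\otimes-:\unst\to\unst$ is right adjoint to Lannes' $T$, one has $\fix(\field[u]\otimes N)\cong TN$; and since a $\field[u]$-module with trivial action admits no nonzero map to a free $\field[u]$-module, $\fix$ vanishes on trivially-acted modules, so the short exact sequence $0\to\overline{\field[u]}\otimes N\to\field[u]\otimes N\to N\to 0$ (trivial action on the quotient) should yield $\fix(\overline{\field[u]}\otimes N)\cong TN$, together with the $\fix$-acyclicity of the modules $\field[u]\otimes(-)$ and $\overline{\field[u]}\otimes(-)$ (here the exactness of $T$ enters, via $\fix$-acyclic resolutions built from the projectives $\field[u]\otimes P$, and via the observation that $L_{\geq 2}\fix$ vanishes on every $\field[u]$-free object). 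Applying the right exact $\fix$ to the two short exact sequences, invoking Theorem~\ref{THM:fix} (so that $\fix\rprime_1 M\cong M$) and the acyclicities just described, and identifying the resulting maps with the evident inclusion and projection for the splitting $T(-) = (-)\oplus\redT(-)$, one splices to the natural exact sequence $0\to M\to TM\to T\redT M\to\fix C_2 M\to 0$ and reads off $\fix C_2 M\cong\redT^2 M$.

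I expect the main obstacle to be the two identifications of the cokernel term, $\indec C_2 M\cong\Sigma(M:\usquare)$ and $\fix C_2 M\cong\redT^2 M$ — together with the preliminary computation of $\indec\psi_M$ in Step~(1) — since these require unwinding precisely how the copresentation map $\psi_M$ interacts with the loop functor $\Omega$, with the division functor $(-:\usquare)$, and with the iterated reduced $T$-functor on nilclosed modules. A secondary difficulty is the control of the left derived functors of $\fix$ in Step~(2), i.e.\ establishing $\fix(\overline{\field[u]}\otimes N)\cong TN$ and the $\fix$-acyclicity of the free $\field[u]$-modules in play (equivalently, the vanishing of $L_1\fix$ on trivially-acted modules). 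The auxiliary fact $\indec R_1 M\cong\Phi M$ is also needed, but should follow either from the known structure of the Singer functor or from comparing Poincaré series once $\rprime_1 M$ is known to be $\field[u]$-free.
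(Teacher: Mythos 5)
Your skeleton matches the paper's proof closely: take $C_2 M := \mathrm{coker}\,\overline{\tau}_M$, split the four-term sequence into two short exact sequences through the image $JM = C_1 M$, and apply $\indec$ and $\fix$ to each piece. Two points of technique differ from the paper, and one of your characterizations of a key map is off. First, you establish $\field[u]$-freeness via $\mathrm{Tor}^{\field[u]}_1(\field,-)$-vanishing, where the paper uses the cartesian-square criterion of Lemmas~\ref{lem:tfree-quotient} and \ref{lem:equiv-cond} (that the $\field[u]$-generators of $C_1 M$ inject into the indecomposables of $\overline{\field[u]}\otimes\redT M$, which is exactly the injectivity of $\Sigma\alpha_M$ you need anyway); over the graded polynomial ring these criteria are equivalent, so this is a cosmetic difference. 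Second, you hedge on whether $\fix$ is exact and propose controlling $L_\bullet\fix$ via $\fix$-acyclic resolutions; but exactness of $\fix$ is a known theorem of Lannes (used freely in the paper, e.g.\ in the proof of Proposition~\ref{prop:fix-rtilde}), so no derived-functor bookkeeping is needed --- you can simply apply $\fix$ to both short exact sequences.

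The one place where your description is substantively imprecise is the identification of $\fix(\overline{\tau}_M)$. It is not the ``evident inclusion and projection for the splitting $T = \mathrm{id}\oplus\redT$''. The paper's Lemma~\ref{lem:fix-tau} computes $\fix(\tau_M)$ as the map $T_\field M \to T_{\field^2}M$ induced by the diagonal $\field\to\field^2$ (a genuine calculation through the Boolean algebra $\field^\field$), and then $\fix(\overline{\tau}_M)$ factors as $TM\twoheadrightarrow\redT M\to T\redT M\cong\redT M\oplus\redT^2 M$ where the second map is the pair $(\mathrm{id},\bar\delta)$ with $\bar\delta$ the reduced diagonal --- not a direct-summand inclusion. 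You still get the right cokernel, because any monomorphism $\redT M\to\redT M\oplus\redT^2 M$ whose first component is an isomorphism has cokernel $\redT^2 M$; but the map itself must be computed, not read off the splitting. You correctly flag the analogous identifications in part~(1) --- $\indec\,\rprime_1 M\cong\Phi M$, the factorization through $\Sigma\Omega M$, and the identification of the cokernel as $\Sigma(M:\usquare)$ --- as the substance to be supplied; these are exactly Proposition~\ref{prop:rho-prime}, Corollary~\ref{cor:reduced-R1-Q1}(4), and Proposition~\ref{prop:division} in the paper, and the injectivity of $\alpha_M$ for nilclosed $M$ (Proposition~\ref{prop:division}) is the single input that upgrades the statement from the reduced to the nilclosed case.
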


Organization of the paper: Section \ref{sect:recollect} provides
background and introduces notation used in the paper and Section
\ref{sect:review} gives a rapid review of the Singer functor $R_1$. The functors
$\rtilde_1$ and $\rprime_1$ are introduced in Section \ref{sect:Rone}, where 
Theorem \ref{THM:R1-rtilde} is proven.  Theorem \ref{THM:fix} is proved in
Section \ref{sect:fix} and Theorem \ref{THM:reduced} in Section
\ref{sect:reduced}, with an application of the methods being given in Corollary
\ref{cor:reduced-R1-Q1}. Theorem \ref{THM:nilclosed} is proved in  Section
\ref{sect:nilclosed} and the appendix proves some auxiliary results on division
functors.

\section{Background}
\label{sect:recollect}

Throughout this paper, $\field$ is the prime field $\field_2$ of
characteristic two (analogues of the results presented here hold in odd
characteristic). The
category of unstable modules over the Steenrod algebra is denoted by $\unst$
and the category of unstable algebras by $\unstalg$. See \cite{sch} for
background on unstable modules and algebras.

\begin{nota}
\ 
\begin{enumerate}
\item
Let $\Phi$ denote the degree-doubling functor on graded vector
spaces. (If $X$ is a graded vector space, $\Phi X$ is
concentrated in even degree and $(\Phi X)_{2n}= X_n$.) 
\item
If $M, N$ are unstable modules, a morphism between the underlying
graded vector spaces is denoted by $\xymatrix{ M \ar@{-->} [r]& N}$.
\end{enumerate}
\end{nota}

The functor $\Phi$ restricts to  a functor $\Phi : \unst \rightarrow
\unst$ and there is a natural morphism of unstable modules $Sq_0 : \Phi M
\rightarrow M$.

Lannes' functor $T_V : \unst \rightarrow \unst$, for $V$ an
elementary abelian $2$-group, is the left adjoint to the functor $H^* (V)
\otimes - : \unst \rightarrow \unst$. The association $V \mapsto T_V$ is
covariantly functorial in $V$; namely, a morphism of $\field$-vector spaces
$V\rightarrow W$ induces a natural transformation $T_V \rightarrow T_W$.
 The reduced $T$-functor, $\redT : \unst \rightarrow \unst$, occurs in the
natural splitting $TM \cong  M \oplus \redT M $ given by $M \cong T_0 M
\rightarrow TM = T_\field M \rightarrow M$ induced by the zero $\field$-vector
space.

If $K$ is an unstable algebra, then $K\dash \unst$ denotes the category of
$K$-modules in $\unst$, which is abelian (see \cite[Section 4.4]{lannes}, for
example). Here, $K$ will be the unstable algebra $\field [u]$ with
$u$ of degree one.

The tensor product induces an exact functor $\field [u] \otimes - :
\unst \rightarrow \field [u]\dash \unst$,  which is left adjoint to the
forgetful
functor $\field [u] \dash \unst \rightarrow \unst$. The functor $\field [u]
\otimes -$ admits a left adjoint, the functor $\fix$. (See \cite{lannes} and
\cite{lz2} for  properties of the functor $\fix$.)

These adjunctions are summarized by the diagram 
\[
 \xymatrix{
\field[u]\dash \unst 
\ar@<3ex>[rr]|(.6){\fix}_(.4)\bot
\ar@{<-}[rr]|(.6){\field [u] \otimes -}_(.4)\bot
\ar@<-3ex>[rr]|(.6){\mathrm{forget}}
&&
\unst,
}
\]
and it is formal that there is a natural isomorphism $T(-) \cong \fix
(\field[u] \otimes -)$. 

The augmentation $\epsilon : \field [u] \rightarrow \field$ induces  a section
$\unst \rightarrow \field [u]\dash \unst $ to the forgetful functor. This admits
a left adjoint, the functor of indecomposables, $\indec : \field [u]\dash
\unst \rightarrow \unst$.

%%%%%%%%%%%%%%%%%%%%%%%%%%%%%%%%%%%%%%%%%%%%%%%%%%%%%%%%%%%%%%%%%%%%%%%%%%%%
 The following general result on graded connected modules will be used without
further comment.

\begin{lem}
\label{lem:u-torsion-free}
Let $\kfield$ be a field and $\kfield[u]$ be the graded polynomial algebra on a
generator of degree one. A graded, connected $\kfield [u]$-module $M$ is
$u$-torsion free if and only if it is a free $\kfield [u]$-module.
\end{lem}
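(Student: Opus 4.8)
First I would dispatch the two implications separately, observing that all the content lies in showing that a $u$-torsion free connected module is free: the reverse implication is immediate, since $\kfield[u]$ is an integral domain and hence a direct sum of copies of $\kfield[u]$ has no $u$-torsion. Here ``connected'' enters only through the consequence that $M$ is bounded below, i.e. $M_n = 0$ for $n$ sufficiently small, and this hypothesis is essential --- the $\kfield[u]$-module $\kfield[u,u^{-1}]$ is $u$-torsion free but not free.

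Assuming now that $M$ is $u$-torsion free and bounded below, I would choose homogeneous elements $\{x_i\}_{i\in I}$ of $M$ whose classes form a $\kfield$-basis of the module of indecomposables $M/uM$, and claim that $\{x_i\}$ is then a $\kfield[u]$-basis of $M$; this is a graded form of Nakayama's lemma. For generation, put $N = \sum_i \kfield[u] x_i$: since the classes $\bar x_i$ span $M/uM$ one has $N + uM = M$, so $M/N = u(M/N)$ is $u$-divisible, and a bounded-below $u$-divisible graded module must vanish, because in its lowest nonzero degree $d$ one would get $(M/N)_d = u\cdot(M/N)_{d-1} = 0$. Hence $N = M$.

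For linear independence I would take a nontrivial homogeneous relation $\sum_i p_i x_i = 0$ with $p_i \in \kfield[u]$ (an arbitrary relation splits into homogeneous pieces), write each nonzero $p_i$ as $u^{e_i} q_i$ with $q_i$ of nonzero constant term, let $e$ be the minimum of the $e_i$, cancel the common factor $u^e$ using $u$-torsion freeness, and reduce the result modulo $uM$: the terms with $e_i > e$ vanish while those with $e_i = e$ contribute $q_i(0)\,\bar x_i$, producing a nontrivial $\kfield$-linear dependence among the $\bar x_i$ and contradicting their independence in $M/uM$. The main obstacle is nothing deep --- just the bookkeeping in this last step, namely extracting the exact power of $u$ and tracking which monomials survive modulo $uM$; the genuine input everywhere is the connectivity hypothesis, which both powers the Nakayama argument and rules out counterexamples of the $\kfield[u,u^{-1}]$ type.
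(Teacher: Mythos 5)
The paper does not prove this lemma; it is stated as a general fact to be ``used without further comment,'' so there is no in-text argument to compare against. Your proof is correct and is the standard graded Nakayama argument: lift a homogeneous $\kfield$-basis of $M/uM$, obtain generation from the vanishing of a bounded-below $u$-divisible graded module, and obtain linear independence by stripping off the maximal common power of $u$ (legitimate precisely because $M$ is $u$-torsion free) and reducing modulo $uM$. You have also correctly identified that ``connected'' is used only through boundedness below and that this hypothesis is genuinely needed, the example $\kfield[u,u^{-1}]$ showing why. One small simplification you might note: in a \emph{homogeneous} relation $\sum_i p_i x_i = 0$ each coefficient $p_i$ is forced (by degree considerations) to be a single monomial $c_i u^{k_i}$ with $c_i \in \kfield$, so the $q_i$ in your last step are just scalars and the bookkeeping collapses; this changes nothing essential about the argument.
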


\subsection{The Singer functor $R_1$}
\label{sect:review}

The definition and the
properties of the Singer functor $R_1$ are reviewed in this section;  for
further details, the reader is referred to  Singer \cite{s2,s} 
and the article \cite{lz} of Lannes and Zarati.

\begin{defn} 
For $M$ an unstable module, 
\begin{enumerate}
\item
let $\xymatrix{ \Phi M \ar@{-->}[r]^{\st_1} & \field [u] \otimes M }$  be
defined by 
$\st_1 ( x ):= \sum u^{|x|-i}\otimes Sq^i x $;
\item   
let $R_1 M$ denote the sub $\field [u] $-module of $\field[u]\otimes M $
generated by the image of $\st_1$.
\end{enumerate}
\end{defn}

\begin{prop}
This construction  defines a functor 
 $ 
 R_1 :\unst \rightarrow \field [u]\dash \unst
 $ 
and, by forgetting the $\field[u]$-module structure, a functor $R_1 :
 \unst\rightarrow \unst $ which satisfies the following
 properties:
\begin{enumerate}
\item
$R_1$ is exact and commutes with limits and colimits;
\item
there exists a unique natural transformation
 $\rho_1 : R_1 M \rightarrow     \Phi M$ which makes the following
     diagram commute
\[
 \xymatrix{
R_1 M 
\ar[d]_{\rho_1}
\ar[r]
&
\field [u] \otimes M 
\ar[d]^{\epsilon \otimes 1}
\\
\Phi M
\ar[r]_{Sq_0}
&
M;
}
\]
\item
there is a natural short exact sequence in $\unst$
\[
0 \rightarrow u R_1 M \rightarrow R_1 M
\stackrel{\rho_1}{\rightarrow} \Phi M 
\rightarrow 0;
\] 
\item
$R_1 M $ is a free $\field[u]$-module on a basis $\st_1 (x)$, as $x$
     ranges over a homogeneous basis of $M$;
\item
if $N \subset M$, then $R_1 N = R_1 M \cap (\field [u] \otimes N)$ as
     a submodule of   $\field[u]\otimes M$;
\item
if $M$ is reduced (respectively nilclosed), then $R_1 M$ is reduced
     (resp. nilclosed);
\item
there is a natural isomorphism $
 R_1 (M\otimes N)
\cong
R_1 M \otimes _{\field[u]} R_1 N
$  in $\field[u]\dash\unst $;
\item
if $K \in \unstalg$ is an unstable algebra, then $R_1 K$ has the
     structure of an unstable algebra and belongs to  the under-category $\field[u]\downarrow \unstalg$.
\end{enumerate}
\end{prop}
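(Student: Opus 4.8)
The plan is to isolate a single substantive input --- the \emph{algebraic Singer construction} --- and to deduce the eight clauses from it by formal arguments. The input is that the $\field[u]$-submodule of $\field[u]\otimes M$ generated by the image of $\st_1$ is stable under the $\cala$-action and is again an unstable module, so that $R_1 M$ is genuinely an object of $\field[u]\dash\unst$; this is where all the $\cala$-module content sits. I would establish it by computing $Sq^k\st_1(x)$ from the Cartan formula on $\field[u]\otimes M$ (with $Sq(u)=u+u^2$) and rewriting the outcome, via the Adem relations, as a $\field[u]$-linear combination of the elements $\st_1(Sq^j x)$, then checking the instability bound; equivalently one may appeal to the invariant-theoretic description, citing Singer \cite{s,s2} and Lannes--Zarati \cite{lz}. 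Granting it, functoriality of $R_1\colon\unst\to\field[u]\dash\unst$ and its $\field[u]$-linearity are immediate, since the formula defining $\st_1$ involves no choices and is natural in $M$ by naturality of the Steenrod operations. The hard part, and the only genuine obstacle, is this first step; everything below is formal once it is in hand.

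I would prove clause (4) next, as it underlies the rest. Filter $\field[u]\otimes M$ by internal $M$-degree; since $\st_1(x)=u^{|x|}\otimes x$ plus terms of strictly larger $M$-degree, and $\{1\otimes x_\alpha\}$ is a $\field[u]$-basis of $\field[u]\otimes M$, any relation $\sum_\alpha p_\alpha\st_1(x_\alpha)=0$ forces, on inspecting the component of least $M$-degree among the $x_\alpha$ with $p_\alpha\neq0$, the vanishing of all such $p_\alpha$; hence $R_1 M=\bigoplus_\alpha\field[u]\cdot\st_1(x_\alpha)$ is free. Clause (5) is then formal: given $N\subseteq M$, extend a homogeneous basis of $N$ to one of $M$; if $z=\sum_\alpha p_\alpha\st_1(x_\alpha)\in R_1 M$ lies in $\field[u]\otimes N$, then applying $1\otimes(M\twoheadrightarrow M/N)$ and using naturality of $\st_1$ gives $\sum_{x_\alpha\notin N}p_\alpha\,\st_1^{M/N}(\bar x_\alpha)=0$, so those $p_\alpha$ vanish by (4) for $M/N$, i.e. $z\in R_1 N$. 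With (4) and (5), clause (1) follows: exactness is obtained by choosing compatible bases across a short exact sequence; commutation with coproducts is read off the description of $R_1 M$ as generated by a natural image, which with right exactness gives commutation with all colimits; and commutation with products --- hence, by left exactness, with all limits --- is checked in each internal degree, where $\field[u]\otimes-$ commutes with products.

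For clauses (2) and (3): by (4), $R_1 M/uR_1 M=\bigoplus_\alpha\field\cdot\overline{\st_1(x_\alpha)}$ has one generator in degree $2|x_\alpha|$ for each homogeneous basis element of $M$, so as a graded vector space it is $\Phi M$; a Cartan-formula computation identifies the $\cala$-module structure, the odd Steenrod operations acting as zero for degree-parity reasons and $Sq^{2j}$ reproducing $\Phi(Sq^j)$, giving a natural isomorphism $R_1 M/uR_1M\cong\Phi M$ of unstable modules. Take $\rho_1$ to be the quotient map; this yields the short exact sequence of (3), and the defining square commutes because on the generators $\st_1(x)$ both composites around it equal $Sq^{|x|}x$ (recall $Sq_0$ is $Sq^n$ on $(\Phi M)_{2n}$). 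For uniqueness one needs naturality: two solutions differ by a natural transformation $R_1\Rightarrow\Phi$ landing in the kernel of $Sq_0\colon\Phi(-)\to(-)$, which is zero on reduced modules; since every free unstable module $F(n)$ is reduced (as $Sq_0$ sends the admissible-monomial basis of $\Phi F(n)$ injectively into $F(n)$) and $R_1$ is exact, such a transformation vanishes on $R_1 M$ for every $M$, as $M$ is a quotient of a free module.

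The last three clauses are short. Clause (6): $\field[u]$ is reduced, and reducedness passes to tensor products (since $Sq_0^{M\otimes N}=Sq_0^M\otimes Sq_0^N$ under $\Phi(M\otimes N)\cong\Phi M\otimes\Phi N$) and to submodules, so $R_1 M\subseteq\field[u]\otimes M$ is reduced when $M$ is; preservation of nilclosedness I would cite from \cite{lz}. Clause (7): the Cartan formula gives the identity $\st_1(x)\cdot\st_1(y)=\st_1(x\otimes y)$ inside $\field[u]\otimes(M\otimes N)$, so under the canonical isomorphism $(\field[u]\otimes M)\otimes_{\field[u]}(\field[u]\otimes N)\cong\field[u]\otimes(M\otimes N)$ the submodule $R_1 M\otimes_{\field[u]}R_1 N$ has image the $\field[u]$-span of the $\st_1(x\otimes y)$, namely $R_1(M\otimes N)$; that this map is an isomorphism is freeness (4) applied to the bases $\{x_\alpha\otimes y_\beta\}$ and $\{\st_1(x_\alpha)\otimes_{\field[u]}\st_1(y_\beta)\}$. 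Clause (8): since $\st_1(1)=1\otimes1$ one has $R_1\field=\field[u]$, so applying $R_1$ to the unit and product of $K$ and invoking (7) exhibits $R_1 K$ as a subalgebra of the unstable algebra $\field[u]\otimes K$, with $\field[u]=R_1\field\to R_1 K$ providing the object of $\field[u]\downarrow\unstalg$; it is unstable as an algebra since it is an unstable $\cala$-submodule closed under multiplication and containing the unit.
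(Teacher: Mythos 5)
The paper gives no proof of this Proposition at all: it appears in the ``review'' subsection on the Singer functor, and the reader is referred to Singer \cite{s,s2} and Lannes--Zarati \cite{lz}. So there is no proof in the paper to compare against; your proposal is a self-contained derivation of a cited statement, which is a legitimate thing to attempt.

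Your overall strategy is sound and the logical dependencies are well chosen: isolate the one non-formal input (closure of the $\field[u]$-span of the image of $\st_1$ under the $\cala$-action), prove freeness (4) by a leading-term argument in the $M$-degree filtration, then deduce (5), exactness, and colimit/limit compatibility from freeness and naturality of $\st_1$. I checked the main steps and they go through. A few points deserve more care if this were to be written out. First, in (4) and throughout, it is worth stating that $\st_1$ is $\field$-linear (which it is, since each $Sq^i$ is additive), since this is implicitly used when passing from ``generated by the image of $\st_1$'' to ``free on $\st_1(x_\alpha)$ for a basis''. Second, the identification of the $\cala$-module structure on $R_1M/uR_1M$ with $\Phi M$ in clauses (2)--(3) is the genuine content of Singer's construction; the parenthetical ``Cartan-formula computation'' is doing a lot of work there and should be expanded or cited explicitly, as you already do for the $\cala$-stability. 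Third, the limits argument is terse: what makes it work is that, by (4) and additivity of $\st_1$, $(R_1M)^n$ is naturally a \emph{finite} direct sum $\bigoplus_{2d\le n}M^d$, and finite direct sums commute with arbitrary products degreewise; this should be made explicit rather than delegated to ``$\field[u]\otimes -$ commutes with products'' (which is true but is a statement about a different functor). Finally, the uniqueness argument for $\rho_1$ is correct, but note that it only establishes uniqueness of $\rho_1$ as a \emph{natural} transformation, which is indeed what the statement asserts. With those expansions this would be a complete and more self-contained account than the references the paper leans on.
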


\section{Building the Singer functor using  $T$}
\label{sect:Rone}

This section introduces the functor $\rtilde_1$, which is a first
approximation to the Singer functor $R_1$. The functors $\rtilde_1$, $R_1$ are
shown to coincide on the full subcategory of nilclosed unstable modules. 
Then Kan extension gives a functor $\rprime_1$,  which is shown 
to be naturally equivalent to $R_1$.

\begin{defn}
\label{def:sigma-tau}
For $M$  an unstable module, let $ \sigma_M , \tau_M$ denote the natural
morphisms of $\field
[u]\dash \unst$:
\begin{enumerate}
\item
$ 
\tau_M :  \field [u] \otimes M 
\rightarrow 
\field [u] \otimes TM
$ induced by the adjunction unit $M \rightarrow \field [u] \otimes TM$ in
$\unst$;
\item
$
\sigma_M : \field [u] \otimes M \rightarrow 
\field [u] \otimes TM
$ 
 given by applying the
functor $\field [u] \otimes -$ to the natural transformation $M \cong
 T_0 M \rightarrow TM$.
\end{enumerate}
\end{defn}

\begin{defn}
\label{def:rtilde}
Let $\rtilde_1 : \unst \rightarrow \field [u]\dash\unst$ 
 be the functor  determined on an unstable module $M$ by 
\begin{eqnarray}
\label{eqn:def_rtilde}
 \rtilde _1 M := \ker \{ \xymatrix{ 
\field [u] \otimes M
 \ar@<.5ex>[r]^{\tau_M}
\ar@<-.5ex>[r]_{\sigma_M}
&
\field [u] \otimes TM
\}.
}
\end{eqnarray}
\end{defn}

\begin{lem}
 \label{lem:reflexive}
The equalizer (\ref{eqn:def_rtilde}) defining $\rtilde_1 M$ is a reflexive
equalizer.
\end{lem}

\begin{proof}
By definition of a reflexive equalizer, it suffices to exhibit a morphism
$\field [u] \otimes TM \rightarrow \field [u] \otimes M$ which is a common
retract to $\sigma_M$ and $\tau_M$; such a retract is provided by the morphism
of $\field [u]$-modules which is induced by the
projection $TM \twoheadrightarrow M$.
\end{proof}

\begin{prop}
\label{prop:fundamental-rtilde}
\ 
\begin{enumerate}
 \item 
There is a natural monomorphism  $
 \rtilde_1 \hookrightarrow \field [u] \otimes - 
$
of functors $\unst \rightarrow \field[u] \dash
\unst$ and 
for $M$ an unstable module, $\rtilde_1 M $ has free underlying
$\field [u] $-module.
\item
The functor $\rtilde_1 $ preserves the classes of reduced (respectively
nilclosed) unstable modules. 
\item
If $N \subset M$ are unstable modules, then $\rtilde_1 N = \rtilde_1 M \cap
(\field [u] \otimes N)$.
\item
The functor $\rtilde_1 : \unst \rightarrow \field [u] \dash \unst$ is left
exact.
\item
The functor $\rtilde_1$ commutes with coproducts.
\item
If $M$ is an unstable module, then the diagram of $\field[u]$-modules 
\[
 \xymatrix{
\rtilde_1 M 
\ar[r]
\ar@{^(->}[d]
&
\rtilde_1 M [u^{-1}]
\ar@{^(->}[d]
\\
\field [u] \otimes M 
\ar@{^(->}[r]
&
\field[u^{\pm 1} ] \otimes M
}
\]
is cartesian.
\item
The functor $\rtilde_1$ commutes with suspension; more generally, if $M, X$ are
unstable modules where $X$ is locally finite, then there is a natural
isomorphism $\rtilde _1 (M \otimes X) \cong \rtilde _1 (M) \otimes X$.
\end{enumerate}
\end{prop}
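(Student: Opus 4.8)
The plan is to establish the seven assertions of Proposition~\ref{prop:fundamental-rtilde} by exploiting two structural facts: that $\rtilde_1 M$ is defined as a \emph{reflexive} equalizer (Lemma~\ref{lem:reflexive}) and that the ambient functor $\field[u]\otimes-$ is exact with good preservation properties. First, for (1): the monomorphism $\rtilde_1\hookrightarrow\field[u]\otimes-$ is immediate from the definition as a kernel. To see that the underlying $\field[u]$-module of $\rtilde_1 M$ is free, I would invoke Lemma~\ref{lem:u-torsion-free}: since $\rtilde_1 M$ is a graded connected submodule of the $u$-torsion free module $\field[u]\otimes M$, it is itself $u$-torsion free, hence free. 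Assertion (3) is then a purely formal consequence of left exactness of $\field[u]\otimes-$ applied to $0\to N\to M$, together with the compatibility of $\sigma$ and $\tau$ with the inclusion $N\hookrightarrow M$: intersecting the equalizer diagram for $M$ with $\field[u]\otimes N$ recovers the equalizer diagram for $N$.

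For (4) and (5), I would argue that $\rtilde_1$ is left exact because it is a kernel of a natural transformation between two exact functors ($\field[u]\otimes-$ and $\field[u]\otimes T(-)$; note $T$ is exact), and kernels commute with finite limits; more carefully, a left exact functor composed with/into a kernel construction stays left exact, so $\rtilde_1$ sends short exact sequences to left exact sequences. For commutation with coproducts (5), the key point is that $\field[u]\otimes-$ and $T$ both commute with coproducts (the latter since $T=T_\field$ commutes with all colimits), and that in $\field[u]\dash\unst$ filtered colimits — in particular coproducts — are exact, so the kernel defining $\rtilde_1(\bigoplus M_i)$ is the coproduct of the kernels. Assertion (2) follows from (1) combined with standard closure properties: a submodule of a reduced module is reduced, giving the reduced case at once; for the nilclosed case one uses that $\field[u]\otimes M$ is nilclosed when $M$ is (as $\field[u]=H^*(B\zed/2\,;\text{triv})$-type behaviour, or directly since $\field[u]\otimes-$ preserves nilclosedness), that $T$ preserves nilclosedness, and that the kernel of a map of nilclosed modules is nilclosed — hence $\rtilde_1 M$, being such a kernel, is nilclosed.

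For (6), the claim is that localizing at $u$ and the inclusion into $\field[u]\otimes M$ form a pullback square. Since $\rtilde_1 M$ is $u$-torsion free, the vertical map $\rtilde_1 M\hookrightarrow\rtilde_1 M[u^{-1}]$ is injective, and the square commutes by naturality of localization. To see it is cartesian, one checks that an element of $\field[u]\otimes M$ whose image in $\field[u^{\pm1}]\otimes M$ lies in $\rtilde_1 M[u^{-1}]$ already lies in $\rtilde_1 M$; this reduces to the statement that $\rtilde_1 M$ is saturated, i.e. $x\in\field[u]\otimes M$ with $u^k x\in\rtilde_1 M$ implies $x\in\rtilde_1 M$ — which holds because both $\sigma_M$ and $\tau_M$ are $\field[u]$-linear and $\field[u]\otimes TM$ is $u$-torsion free, so $u^k(\sigma_M-\tau_M)(x)=0$ forces $(\sigma_M-\tau_M)(x)=0$. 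For (7), commutation with suspension follows from $T\Sigma\cong\Sigma T$ (a standard property of Lannes' functor) and exactness of $\Sigma$ and $\field[u]\otimes-$; the more general statement for $X$ locally finite uses that $T(M\otimes X)\cong TM\otimes X$ when $X$ is locally finite (again a standard property of $T$, since locally finite modules are colimits of finite ones and $T$ commutes with $-\otimes F$ for $F$ finite), together with the fact that $\field[u]\otimes(M\otimes X)\cong(\field[u]\otimes M)\otimes X$ and that $-\otimes X$ is exact.

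The main obstacle I anticipate is assertion (6): the other parts are soft diagram-chases or invocations of known properties of $T$ and of $\field[u]\otimes-$, but the cartesian-square claim requires genuinely checking saturation of $\rtilde_1 M$ inside $\field[u]\otimes M$, and being careful that the localization $\rtilde_1 M[u^{-1}]$ really is computed as the intersection $\rtilde_1 M[u^{-1}] = (\field[u^{\pm1}]\otimes M)\cap(\text{localized kernel})$ — one must verify that localization commutes with the kernel defining $\rtilde_1$, which is where $u$-torsion freeness of $\field[u]\otimes TM$ does the real work. A secondary subtlety is the nilclosed case of (2): one should be explicit about why $\field[u]\otimes M$ and $\field[u]\otimes TM$ are nilclosed when $M$ is, and cite the relevant stability of nilclosedness under kernels in $\field[u]\dash\unst$.
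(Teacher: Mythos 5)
Your proposal is correct and follows essentially the same route as the paper: parts (1)--(3) and (7) are the same formal observations; (4) you argue directly that the kernel of a natural transformation between exact functors (using exactness of $T$) is left exact, while the paper deduces the middle-term exactness from part (3) — a cosmetic difference; (5) and (6) match the paper's arguments (coproducts of kernels, and saturation via $u$-torsion-freeness of $\field[u]\otimes TM$, which is just the paper's ``exactness of localization'' made explicit). The nilclosed case of (2) is also the paper's argument restated: the kernel of a map between nilclosed objects is nilclosed because its quotient embeds in a reduced module.
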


\begin{proof} \ 
\begin{enumerate}
 \item
The first statement follows directly from the definition; in particular,
$\rtilde_1 M$ is a sub $\field [u]$-module of $\field [u] \otimes M$ 
and is thus $\field [u]$-free.
\item
$\rtilde_1 M$ is a  sub-object of $\field [u] \otimes M$ and the
quotient is a
submodule of $\field [u] \otimes TM$. If $M$ is reduced (respectively
nilclosed) then $\field [u] \otimes M$ and $\field [u] \otimes TM$ are both
reduced (respectively nilclosed). The result follows.
\item
This statement is a formal consequence of the definition of $\rtilde_1$ as a
natural equalizer.
\item
The left exactness of $\rtilde_1$ is a straightforward verification (the
exactness corresponding to the middle term of a short exact sequence is a 
consequence of the previous statement).
\item
The formation of the equalizer diagram commutes with coproducts, since the
$T$-functor is a left adjoint.
\item
The underlying $\field [u]$-module  $\rtilde_1 M$ is the intersection of
$\field[u] \otimes M$
with the equalizer of the diagram of $\field [u^{\pm 1}]$-modules  given by
localizing:
\[
 \field [u^{\pm 1}] \otimes M
\rightrightarrows
\field  [u^{\pm 1}] \otimes TM.
\]
The statement follows by exactness of localization.
\item
The $T$-functor commutes naturally with suspension, which implies the first
statement. A similar argument gives the general result, using the fact
that $T$ commutes with tensor products and $TX \cong X$ if and only if $X$ is
locally finite.
\end{enumerate}
\end{proof}

\begin{rem}
 The functor $\rtilde_1$ is not exact. For example, the surjection $F(1)
\twoheadrightarrow \Sigma \field$ does not yield a surjection under $\rtilde_1$.
\end{rem}

%%%%%%%%%%%%%%%%%%%%%%%%%%%%%%%%%%%%%%%%%%%%%%%%%%%%%%%%%%%%%%%%%%%%%%%%%%
\subsection{Product structures}

The category $\field [u] \dash \unst$ has tensor structure given by
$\otimes_{\field [u]}$. 

\begin{prop}
\label{prop:product-rtilde}
 Let $M,N$ be unstable modules. There is a binatural isomorphism
\[
\mu_{M,N} :  \rtilde_1 M \otimes_{\field [u] } \rtilde_1 N
 \stackrel{\cong}{\rightarrow}
\rtilde_1 (M \otimes N)
\]
which is induced by the isomorphism 
$
 (\field[u] \otimes M) \otimes_{\field [u] } (\field [u] \otimes N)
\cong 
\field [u] \otimes (M \otimes N).
$
\end{prop}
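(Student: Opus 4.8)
The plan is to exploit the fact, proved in Proposition~\ref{prop:fundamental-rtilde}, that $\rtilde_1$ is left exact and commutes with coproducts, together with the known multiplicativity of the $T$-functor. First I would fix the canonical isomorphism $\nu : (\field[u] \otimes M) \otimes_{\field[u]} (\field[u] \otimes N) \stackrel{\cong}{\rightarrow} \field[u] \otimes (M \otimes N)$ and observe that it is a morphism of $\field[u]$-modules in $\unst$. The first real step is to check that under $\nu$ the subobject $\rtilde_1 M \otimes_{\field[u]} \rtilde_1 N$ lands inside $\rtilde_1(M \otimes N)$; this produces the candidate map $\mu_{M,N}$. To see the inclusion, I would use the defining equalizers: an element of $\rtilde_1 M$ (resp.\ $\rtilde_1 N$) is by definition killed by $\sigma_M - \tau_M$ (resp.\ $\sigma_N - \tau_N$), and I want to deduce it is killed by $\sigma_{M\otimes N} - \tau_{M\otimes N}$ after passing through $\nu$. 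The key input here is the compatibility of the natural transformations $\sigma$ and $\tau$ with tensor products: both $M \mapsto (M \to \field[u]\otimes TM)$ (the adjunction unit, composed appropriately) and $M \mapsto (T_0 M \to TM)$ are lax/oplax monoidal with respect to the multiplicativity isomorphism $T(M\otimes N)\cong TM\otimes TN$, because $T$ is a monoidal functor and the unit of the $(T, H^*(-)\otimes-)$-adjunction — hence of the $\field[u]$-variant — is compatible with the monoidal structure. Concretely, on $\rtilde_1 M \otimes_{\field[u]}\rtilde_1 N$ the two maps $\sigma$ and $\tau$ into $\field[u]\otimes T(M\otimes N)\cong \field[u]\otimes TM\otimes_{\field[u]}TN$ both become the "product" $\sigma\otimes\sigma = \tau\otimes\tau$ of the respective maps, which agree there; so the difference vanishes. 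This gives $\mu_{M,N}$, and binaturality is automatic since everything in sight is natural.

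The second step is to prove $\mu_{M,N}$ is an isomorphism. I would do this first for $M, N$ free, i.e.\ $M = F(n)$, $N = F(m)$ (or coproducts thereof), where $\rtilde_1 F(n) = R_1 F(n)$ by Theorem~\ref{THM:R1-rtilde}(2) (free unstable modules are nilclosed), and there the statement is the known multiplicativity $R_1(M\otimes N)\cong R_1 M\otimes_{\field[u]} R_1 N$ from Proposition~2.5(7) — so $\mu$ agrees with the Singer-functor product isomorphism and is thus an isomorphism on frees. Since $\rtilde_1$ commutes with coproducts and $\otimes_{\field[u]}$ distributes over coproducts, $\mu$ is an isomorphism on all coproducts of frees, i.e.\ on all projectives of $\unst$. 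For general $M, N$, I would choose a presentation $P_1 \to P_0 \to M \to 0$ with $P_0, P_1$ projective and similarly for $N$, and then chase the resulting commutative diagram relating $\mu_{P_i, P_j}$ and $\mu_{M,N}$, using left exactness of $\rtilde_1$ on each variable. The subtlety is that $\otimes_{\field[u]}$ is not exact and $\rtilde_1$ is not right exact, so the naive two-variable five-lemma does not apply directly; this is the main obstacle. I expect to resolve it by noting that, since $\rtilde_1 M$ is $\field[u]$-free (Proposition~\ref{prop:fundamental-rtilde}(1)), the functor $\rtilde_1 M \otimes_{\field[u]} -$ applied to a short exact sequence of $\field[u]$-\emph{free} modules is again exact, and then running the argument levelwise using that $\rtilde_1$ of a projective is $\field[u]$-free and that a presentation of $M$ in $\unst$ gives, after applying $\rtilde_1$, a left-exact sequence of $\field[u]$-free modules which can be spliced into a genuine exact sequence by a dimension/rank count in each internal degree. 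An alternative, possibly cleaner, route is to bypass the resolution argument entirely: localize at $u$. By Proposition~\ref{prop:fundamental-rtilde}(6) the square relating $\rtilde_1 M$, $\rtilde_1 M[u^{-1}]$, $\field[u]\otimes M$ and $\field[u^{\pm1}]\otimes M$ is cartesian, and $\rtilde_1 M[u^{-1}]$ is computed from the localized equalizer $\field[u^{\pm1}]\otimes M \rightrightarrows \field[u^{\pm1}]\otimes TM$; over $\field[u^{\pm1}]$ one checks $\mu$ is an isomorphism by a direct computation (there $\rtilde_1(-)[u^{-1}]$ is essentially $\Phi(-)$ spread out, which is manifestly monoidal), and then the cartesian squares for $M$, $N$ and $M\otimes N$ force $\mu_{M,N}$ to be an isomorphism integrally.

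In writing this up I would lead with the construction of $\mu_{M,N}$ via $\nu$ and the monoidality of $\sigma,\tau$, state the reduction to projectives via Proposition~2.5(7) and Theorem~\ref{THM:R1-rtilde}(2), and then present the localization argument as the clean way to descend from projectives to arbitrary modules, keeping the resolution-and-rank-count argument as a remark. The one computation I would actually carry out in detail is the compatibility of $\sigma$ and $\tau$ with the monoidal structures, since that is where the content lies; the rest is formal once left exactness, commutation with coproducts, and the cartesian square of Proposition~\ref{prop:fundamental-rtilde}(6) are invoked.
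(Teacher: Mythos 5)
Your construction of the candidate map $\mu_{M,N}$ matches the paper's: both exploit the compatibility of $\sigma$ and $\tau$ with the monoidal structure coming from the multiplicativity of $T$. Where you diverge is in proving $\mu_{M,N}$ is an isomorphism, and there the paper's argument is both shorter and cleaner than what you propose. The paper observes that $\rtilde_1$ is defined by a \emph{reflexive} equalizer (Lemma~\ref{lem:reflexive}) and then invokes the formal fact that the tensor product over $\field[u]$ of two reflexive equalizers in $\field[u]\dash\unst$ is again a reflexive equalizer. (Unwound, this boils down to: the quotients $(\field[u]\otimes M)/\rtilde_1 M$ and $(\field[u]\otimes N)/\rtilde_1 N$ are free, hence flat, so that $\rtilde_1 M \otimes_{\field[u]} \rtilde_1 N$ coincides with the intersection $(\rtilde_1 M \otimes_{\field[u]}(\field[u]\otimes N)) \cap ((\field[u]\otimes M)\otimes_{\field[u]}\rtilde_1 N)$, which is exactly the equalizer of $\sigma_{M\otimes N}$ and $\tau_{M\otimes N}$.) This uses nothing beyond Proposition~\ref{prop:fundamental-rtilde} and Lemma~\ref{lem:u-torsion-free}.

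Your route, by contrast, has two real problems. First, you want to reduce to projectives by appealing to Theorem~\ref{THM:R1-rtilde}(2) and the multiplicativity of $R_1$ (Proposition in Section~\ref{sect:review}, item (7)). Although this does not create a strict circularity --- Theorem~\ref{thm:Rone-rtilde} does not use Proposition~\ref{prop:product-rtilde} --- it imports nontrivial properties of $R_1$ proved elsewhere (Singer, Lannes--Zarati), whereas the whole point of the surrounding section is to build $\rtilde_1$ from $T$-technology independently of $R_1$; in fact Proposition~\ref{prop:product-rtilde} is later \emph{used} (in Lemma~\ref{lem:rhoP-surj}) as input to the self-contained development, so taking it from $R_1$ would weaken the logical design. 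Second, and more seriously, the descent from projectives to general $M$, $N$ is genuinely incomplete as you write it. You correctly note that $\rtilde_1$ is only left exact and $\otimes_{\field[u]}$ is only right exact, so a projective presentation does not give a sequence you can chase; ``left exactness of $\rtilde_1$ on each variable'' applied to a right-exact presentation does not yield exactness of $\rtilde_1 P_1 \to \rtilde_1 P_0 \to \rtilde_1 M \to 0$. The proposed ``rank count'' is not a proof, and the localization alternative, while more promising (the cartesian square condition for $\rtilde_1 M \otimes_{\field[u]}\rtilde_1 N$ does hold, since its cotorsion in $\field[u]\otimes(M\otimes N)$ is built from free modules), still requires you to \emph{prove} the claimed identification of $\rtilde_1(-)[u^{-1}]$ and its monoidality rather than assert it. None of this machinery is needed once one recognizes the reflexive-equalizer structure.
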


\begin{proof}
 The $T$-functor commutes with tensor products; this implies that there is a
natural commutative diagram
\[
 \xymatrix{
(\field [u] \otimes M) \otimes_{\field [u]} (\field [u] \otimes N)
\ar[r]^{\tau_M \otimes \tau_N}
\ar[d]_\cong
&
(\field [u] \otimes TM) \otimes_{\field [u]} (\field [u] \otimes TN)
\ar[d]^\cong
\\
\field [u] \otimes (M\otimes N)
\ar[r]
_{\tau_{M\otimes N}}
&
\field [u] \otimes T(M\otimes N).
}
\]
Similarly,  $\sigma_{M\otimes N}$
identifies with $\sigma_M \otimes \sigma _N$. 
The result follows from the fact that $\rtilde_1$ is
defined by a reflexive equalizer by Lemma \ref{lem:reflexive}, together with
the formal fact that the tensor product of two reflexive
equalizers in $\field [u]\dash\unst$ is a reflexive equalizer.
\end{proof}

\begin{rem}
Alternatively, this result can be proved by using
the Künneth isomorphism, by passage to the reduced $T$-functor, as in
Lemma \ref{lem:rtilde-redT}.
\end{rem}

\begin{rem}
The product isomorphism $\mu_{M,N}$  induces a 
surjection in
$\unst$
\[
 \rtilde_1 M \otimes \rtilde_1 N 
\twoheadrightarrow 
\rtilde_1 (M \otimes N)
\]
via the  surjection $ \rtilde_1 M \otimes \rtilde_1 N
\twoheadrightarrow  \rtilde_1 M \otimes_{\field [u] } \rtilde_1 N $.
\end{rem}

\begin{cor}
\label{cor:rtilde-unstalg}
The functor $\rtilde_1 $ restricts to a functor $\unstalg \rightarrow
\field[u]\downarrow\unstalg$. Moreover, if $K$ is an unstable algebra, then
$\rtilde_1$ induces a functor
 $
 K\dash \unst
\rightarrow
\rtilde_1 K \dash \unst.
$ 
\end{cor}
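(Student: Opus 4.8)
The plan is to read Proposition \ref{prop:product-rtilde} as saying that $\rtilde_1$ is a symmetric strong monoidal functor from $(\unst,\otimes,\field)$ to $(\field[u]\dash\unst,\otimes_{\field[u]},\field[u])$, and then to invoke the general fact that a symmetric strong monoidal functor sends commutative monoids to commutative monoids, the under-category of a commutative monoid to the corresponding under-category, and modules over a monoid to modules over its image.

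First I would record that $\rtilde_1\field = \field[u]$: since $T\field = \field$, one has $\redT\field = 0$, so the two structure maps $\sigma_\field,\tau_\field \colon \field[u]\to\field[u]$ both equal the identity and the equalizer (\ref{eqn:def_rtilde}) is all of $\field[u]$. Combined with the isomorphisms $\mu_{M,N}$ of Proposition \ref{prop:product-rtilde} and this unit isomorphism, $\rtilde_1$ acquires the structure data of a monoidal functor, strong because these maps are isomorphisms. The coherence constraints (associativity, unit, symmetry) are inherited from the functor $\field[u]\otimes-\colon\unst\to\field[u]\dash\unst$, which is evidently symmetric strong monoidal: by the flatness of the $\field[u]$-modules involved (Proposition \ref{prop:fundamental-rtilde}(1)), $\rtilde_1 M\otimes_{\field[u]}\rtilde_1 N$ is a submodule of $(\field[u]\otimes M)\otimes_{\field[u]}(\field[u]\otimes N)\cong\field[u]\otimes(M\otimes N)$, and by construction $\mu_{M,N}$ is the restriction of this canonical isomorphism, identifying the source with the submodule $\rtilde_1(M\otimes N)$; hence every coherence diagram for $\rtilde_1$ sits inside the corresponding commuting diagram for $\field[u]\otimes-$ and commutes.

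Granting this, let $K$ be an unstable algebra, viewed as a commutative monoid $(K,m,\eta)$ in $\unst$. Transporting this structure through $\rtilde_1$ equips $\rtilde_1 K$ with the product $\rtilde_1 K\otimes_{\field[u]}\rtilde_1 K\xrightarrow{\mu}\rtilde_1(K\otimes K)\xrightarrow{\rtilde_1 m}\rtilde_1 K$ and the unit $\field[u]=\rtilde_1\field\xrightarrow{\rtilde_1\eta}\rtilde_1 K$, making it a commutative $\field[u]$-algebra in $\unst$ together with a morphism of such from $\field[u]$. To see that $\rtilde_1 K$ is genuinely an unstable algebra, it remains to verify the instability relation $Sq^{|x|}x=x^2$; but the natural monomorphism $\rtilde_1 K\hookrightarrow\field[u]\otimes K$ of Proposition \ref{prop:fundamental-rtilde}(1) is, again by the construction of $\mu$, a morphism of algebras, and its target $\field[u]\otimes K$ is an unstable algebra, so the relation is inherited. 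Thus $\rtilde_1 K$ lies in $\field[u]\downarrow\unstalg$, and functoriality is immediate from that of $\rtilde_1$ and the naturality of $\mu$.

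Finally, for a $K$-module $N$ in $\unst$ with action $a\colon K\otimes N\to N$, the composite $\rtilde_1 K\otimes_{\field[u]}\rtilde_1 N\xrightarrow{\mu}\rtilde_1(K\otimes N)\xrightarrow{\rtilde_1 a}\rtilde_1 N$ makes $\rtilde_1 N$ a module over the $\field[u]$-algebra $\rtilde_1 K$, the module axioms reducing to the monoidal coherences for $\mu$ and the $K$-module axioms for $N$; naturality of $\mu$ shows that $\rtilde_1$ takes a $K$-linear map to an $\rtilde_1 K$-linear map, whence the functor $K\dash\unst\to\rtilde_1 K\dash\unst$. The only content beyond Proposition \ref{prop:product-rtilde} is the bookkeeping above: the computation $\rtilde_1\field=\field[u]$, the flatness point ensuring $\mu$ really is a restriction of the isomorphism $(\field[u]\otimes M)\otimes_{\field[u]}(\field[u]\otimes N)\cong\field[u]\otimes(M\otimes N)$, and the inheritance of the instability relation from $\field[u]\otimes K$; the monoidal coherences themselves are then automatic.
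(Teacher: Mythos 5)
Your argument is correct, and it spells out exactly what the paper treats as immediate from Proposition~\ref{prop:product-rtilde}: the isomorphisms $\mu_{M,N}$ together with the unit computation $\rtilde_1\field=\field[u]$ make $\rtilde_1$ a symmetric (lax, indeed strong) monoidal functor, whence the transport of monoid and module structures is formal, and the instability relation on $\rtilde_1 K$ is inherited through the algebra monomorphism $\rtilde_1 K\hookrightarrow\field[u]\otimes K$. The observation that $\mu$ is the restriction of the canonical isomorphism $(\field[u]\otimes M)\otimes_{\field[u]}(\field[u]\otimes N)\cong\field[u]\otimes(M\otimes N)$, using $\field[u]$-freeness to guarantee injectivity of the comparison map, is the right way to reduce the coherence checks to those for $\field[u]\otimes-$, and your reduction of the $\rtilde_1 K$-module structure on $\rtilde_1 N$ to the $\otimes_{\field[u]}$-action is likewise fine since one recovers the $\unst$-action by precomposition with the surjection $\rtilde_1 K\otimes\rtilde_1 N\twoheadrightarrow\rtilde_1 K\otimes_{\field[u]}\rtilde_1 N$.
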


%%%%%%%%%%%%%%%%%%%%%%%%%%%%%%%%%%%%%%%%%%%%%%%%%%%%%%%%%%%%%%%%%%%%%%%%
\subsection{Relation with the Singer functor $R_1$}

The comparison between $R_1$ and $\rtilde_1$ on nilclosed unstable modules
relies on comparing the functors on the full subcategory of nilclosed
injectives. For $R_1$, the following calculation is due to Lannes and Zarati.

\begin{prop}
 \cite[Section 5.4.7.5]{lz}
\label{prop:R1-injectives}
Let $V$ be an elementary abelian $2$-group. Then
\[
 R_1 H^* (V) \cong H^* (V\oplus \field) ^{G_V}
\]
where $G_V \subset GL(V \oplus \field)$ is the pointwise stabilizer of $V$.
\end{prop}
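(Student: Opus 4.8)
The plan is to compute $R_1 H^*(V)$ directly from the definition of $R_1$ as the sub-$\field[u]$-module of $\field[u]\otimes H^*(V)$ generated by the image of $\st_1$, and to identify this explicitly with the ring of invariants. First I would set up notation: write $H^*(V) = \field[t_1,\dots,t_n]$ with each $t_i$ of degree one, so that $\field[u]\otimes H^*(V) = H^*(V\oplus\field) = \field[u,t_1,\dots,t_n]$, with $u$ corresponding to the dual basis element of the extra $\field$ summand. The group $G_V$ is the subgroup of $GL(V\oplus\field)$ fixing $V$ pointwise; concretely, on the polynomial generators it fixes each $t_i$ and sends $u \mapsto u + \ell$ for $\ell$ ranging over the linear forms in the $t_i$ (an affine action, $G_V \cong V$ as a group).

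The key computation is to describe $\st_1$ on a degree-one class $t_i$: by definition $\st_1(t_i) = u\otimes t_i + 1\otimes Sq^1 t_i = u\otimes t_i + 1\otimes t_i^2$, i.e. the element $ut_i + t_i^2 = t_i(u+t_i)$ of $\field[u,t_1,\dots,t_n]$. More generally $\st_1$ is multiplicative in the sense of Proposition (part 7), $R_1(M\otimes N)\cong R_1 M \otimes_{\field[u]} R_1 N$, and $R_1$ respects the unstable-algebra structure (part 8), so $R_1 H^*(V)$ is the sub-$\field[u]$-algebra of $\field[u,t_1,\dots,t_n]$ generated by the elements $v_i := t_i(u+t_i)$. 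So the whole statement reduces to the assertion
\[
\field[u][v_1,\dots,v_n] = \field[u,t_1,\dots,t_n]^{G_V},
\]
where $v_i = t_i^2 + u t_i$. One checks first that each $v_i$ is $G_V$-invariant: under $u\mapsto u+\ell$ and $t_i\mapsto t_i$ one has $v_i \mapsto t_i^2 + (u+\ell)t_i$, which is \emph{not} obviously $v_i$ — so a little care is needed. In fact the correct invariant description is that $G_V$ acts trivially on $t_i$ and the orbit products are the relevant invariants; I would instead verify directly, using part (4) of the Proposition on $R_1$ (freeness: $R_1 H^*(V)$ is $\field[u]$-free on the $\st_1$ of a monomial basis of $H^*(V)$), that $R_1 H^*(V)$ has the expected Poincar\'e series, namely the Poincar\'e series of $H^*(V)\otimes\field[u]$, and that $H^*(V\oplus\field)^{G_V}$ has the same Poincar\'e series (the $G_V$-action being free away from the hyperplane $\{u=0\}$, or via a direct count).

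The cleanest route is therefore: (i) observe both sides are sub-$\field[u]$-modules of $\field[u,t_1,\dots,t_n]$ that are free over $\field[u]$ with the same Poincar\'e series (for the left side by part (4); for the right side by a transfer/degree argument for the affine action of $G_V\cong V$); (ii) show the left side is contained in the right side, i.e. that each $\st_1(x)$ is $G_V$-invariant — this follows because the total Steenrod square $Sq = \sum Sq^i$ composed with the ``Frobenius twist by $u$'' is precisely the averaging construction over the affine line, a standard identity (the Mui/Dickson-type computation, or Lannes--Zarati's own); (iii) conclude equality from (i)+(ii) by comparing graded dimensions degree by degree. The main obstacle I anticipate is step (ii): correctly pinning down \emph{why} $\st_1(x)$ lands in the invariants for a general $x$, rather than just on generators, without circularity — the slick argument is to note that $\st_1(t_i) = t_i(t_i+u)$ is the product over the $G_V$-orbit-direction (the element and its image are interchanged only after extending scalars appropriately), and then invoke multiplicativity; alternatively one can simply cite that the sub-algebra generated by the $t_i(t_i+u)$ and $u$ is classically known to be the full invariant ring $H^*(V\oplus\field)^{G_V}$ for this affine stabilizer subgroup. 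Since the Proposition is attributed to Lannes--Zarati, I would in fact present this as a short derivation recovering their identification, emphasizing the identity $\st_1(x) = $ (the class whose reduction mod $u$ is $Sq_0 x = x^{(1)}$, the Frobenius) to anchor the comparison, and leaving the Poincar\'e series bookkeeping as routine.
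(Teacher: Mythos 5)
The paper does not prove this proposition; it cites Lannes--Zarati, and proves instead the parallel statement for $\rtilde_1$ (Lemma \ref{lem:rtilde-injectives}) by identifying the equalizer defining $\rtilde_1 H^*(V)$ with the $G_V$-invariants. Your direct approach via $\st_1$ and multiplicativity is a legitimate alternative, and the ingredients you reach for --- $\st_1(t_i)=t_i(u+t_i)$, multiplicativity of $\st_1$, and $\field[u]$-freeness of $R_1 H^*(V)$ on $\st_1$ of a monomial basis --- are the right ones, giving $R_1 H^*(V)=\field[u][v_1,\dots,v_n]$ with $v_i=t_i(t_i+u)$.

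However, there is a genuine error in your description of the $G_V$-action, and it is not a ``little care is needed'' matter: you have written down the transpose of the correct action. Since $G_V$ fixes $V\subset V\oplus\field$ pointwise, a typical element sends the extra basis vector $e$ to $v'+e$ and fixes each $v\in V$; on cohomology this induces $u\mapsto u$ and $t_i\mapsto t_i+a_i u$ (with $a_i$ the coordinates of $v'$), \emph{not} $t_i\mapsto t_i$, $u\mapsto u+\ell$. With the correct action one sees at once that $u$ and each $v_i=t_i(t_i+u)$ are invariant (the orbit of $t_i$ is $\{t_i,\,t_i+u\}$ with orbit product $v_i$), and the invariant ring is exactly $\field[u,v_1,\dots,v_n]$. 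With your stated action the invariant ring would instead be $\field[t_1,\dots,t_n,Q]$ with $Q=\prod_\ell(u+\ell)$ a Mui/Dickson-type class of degree $2^n$, which has a different Poincar\'e series for $n\geq 2$; so the Poincar\'e-series bookkeeping you defer as ``routine'' would in fact fail, and the moment where you notice the orbit computation is ``not obviously $v_i$'' is a symptom of this mix-up rather than something a transfer argument can repair. Correcting the action makes the rest of your argument go through essentially as the student of Lannes--Zarati would expect.
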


There is an analogous result for the functor $\rtilde_1$.

\begin{lem}
\label{lem:rtilde-injectives}
 Let $V$ be an elementary abelian $2$-group. Then
\[
 \rtilde_1 H^* (V) \cong H^* (V\oplus \field) ^{G_V}.
\]
\end{lem}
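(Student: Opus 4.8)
The strategy is to compute $\rtilde_1 H^*(V)$ directly from its defining equalizer (\ref{eqn:def_rtilde}), using Lannes' computation of $T_V$ on cohomology of elementary abelian groups. Recall that for $W$ an elementary abelian $2$-group there is a natural isomorphism $T_W H^*(V) \cong H^*(V) \otimes \mathbb{F}[\hom(W,V)^*]$, or more usefully, $T H^*(V) = T_\field H^*(V) \cong H^*(V) \otimes H^*(V)^\vee$ appropriately interpreted; the point is that $T H^*(V)$ splits as $H^*(V) \oplus \redT H^*(V)$ and one has explicit control over both maps $\sigma_{H^*(V)}, \tau_{H^*(V)} : \field[u] \otimes H^*(V) \rightrightarrows \field[u] \otimes T H^*(V)$.

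First I would identify the target of the equalizer in a form suited to $V$. Since $H^*(V)$ is a nilclosed (indeed injective) unstable algebra, Proposition \ref{prop:fundamental-rtilde}(2) tells us $\rtilde_1 H^*(V)$ is again nilclosed, and Corollary \ref{cor:rtilde-unstalg} tells us it is an unstable algebra under $\field[u]$. The key computational input is that $H^*(V \oplus \field) \cong H^*(V) \otimes \field[u]$ (with $u$ the polynomial generator dual to the new summand $\field$), and that the $G_V$-action, $G_V$ being the pointwise stabilizer of $V$ in $GL(V \oplus \field)$, is by $\mathbb{F}$-linear automorphisms fixing each element of $V^* \subset H^1(V \oplus \field)$ and sending $u \mapsto u + \ell$ for $\ell \in V^*$. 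Thus $H^*(V\oplus\field)^{G_V}$ consists of those elements of $H^*(V) \otimes \field[u]$ invariant under the substitutions $u \mapsto u + \ell$, $\ell \in V^*$ — this is a concrete subalgebra of $\field[u] \otimes H^*(V)$.

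Then I would match this invariant subalgebra with the equalizer. The morphism $\tau_{H^*(V)} : \field[u]\otimes H^*(V) \to \field[u] \otimes T H^*(V)$ is induced by the adjunction unit $H^*(V) \to \field[u] \otimes T H^*(V)$, which under $\field[u] = H^*(\field)$ and the identification $T H^*(V) \cong H^*(\hom(\field,V))\otimes\cdots$ encodes precisely the restriction/diagonal data, i.e. the "Taylor expansion" $x \mapsto \sum x^{(j)} \otimes (\text{monomial in } u)$; whereas $\sigma_{H^*(V)}$ is $\field[u]\otimes(\text{unit at }0)$, the "constant term" inclusion. An element of $\field[u]\otimes H^*(V)$ lies in the equalizer iff these two expansions agree, and unwinding the functoriality $W \mapsto T_W$ in $W$ (applied to the maps $0 \to \field$ and $\field \xrightarrow{\ell} V$) shows this agreement is exactly invariance under $u \mapsto u + \ell$ for all $\ell \in V^*$. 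Hence $\rtilde_1 H^*(V)$ is identified with $H^*(V\oplus\field)^{G_V}$ as a sub-$\field[u]$-module, and the identification is compatible with the algebra structures by Corollary \ref{cor:rtilde-unstalg} and naturality of $\mu_{M,N}$.

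The main obstacle is keeping the bookkeeping of the two maps $\sigma$ and $\tau$ straight under the several identifications — in particular being precise about how the covariant functoriality $V \mapsto T_V$ translates into the additive group of substitutions $u \mapsto u + \ell$, and checking that the equalizer is computed correctly as a submodule of $\field[u]\otimes H^*(V)$ rather than only up to isomorphism (here Proposition \ref{prop:fundamental-rtilde}(3), which gives compatibility with submodules, and part (1), freeness over $\field[u]$, help pin things down). An alternative, cleaner route — which I would mention as a remark — is to deduce the lemma from the already-established facts that $\rtilde_1$ agrees with $R_1$ on some suitable class together with Proposition \ref{prop:R1-injectives}; but since the whole point of Section \ref{sect:Rone} is to prove $R_1 \cong \rtilde_1$ on nilclosed modules \emph{via} exactly such injective computations, the direct computation above is the honest approach and avoids circularity.
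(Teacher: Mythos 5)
Your overall strategy is the same as the paper's: unwind the equalizer defining $\rtilde_1 H^*(V)$ by identifying the source $\field[u]\otimes H^*(V)$ with $H^*(V\oplus\field)$, the target with copies of $H^*(V\oplus\field)$ indexed over a Boolean algebra, and computing the components of $\sigma$ and $\tau$ so that the equalizer becomes a fixed-point set under $G_V$. (The paper works with the components of $\field^V \otimes H^*(V\oplus\field)$ indexed by $v\in V$ via evaluation maps $\field^V\to\field$, and shows that the $v$-component of $\tau$ is induced by the automorphism $g_v$ of $V\oplus\field$ with $g_v|_V=\mathrm{id}$ and $g_v(0,1)=(v,1)$.) So the plan is sound and honest.

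However, there is a concrete error in your description of the $G_V$-action, and it is not innocuous. You claim $G_V$ acts on $H^*(V\oplus\field)\cong \field[u]\otimes H^*(V)$ by fixing $V^*\subset H^1$ and sending $u\mapsto u+\ell$. This is backwards: since $G_V$ is the pointwise stabilizer of the \emph{subspace} $V$, the element $g_v$ satisfies $g_v(e_i)=e_i$, $g_v(f)=v+f$, so on $H^1=(V\oplus\field)^*$ one computes $g_v^*(u)=u$ and $g_v^*(\ell)=\ell+\ell(v)\,u$ for $\ell\in V^*$. Thus it is $u$ that is fixed (which is exactly what makes $H^*(V\oplus\field)^{G_V}$ a $\field[u]$-module, as it must be), and the elements of $V^*$ that get translated. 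Your version describes instead the pointwise stabilizer of the $\field$-summand, which is a different subgroup of $GL(V\oplus\field)$ with a different invariant ring; already for $V=\zed/2$ the correct invariants $\field_2[u,\ell^2+\ell u]$ and your $\field_2[\ell, u^2+u\ell]$ are distinct subalgebras, and only the former contains $\st_1(1)=u$ as it must. Relatedly, when you invoke the covariant functoriality $W\mapsto T_W$ you write ``$\field\xrightarrow{\ell}V$''; the maps that index the components are elements of $\hom(\field,V)\cong V$, i.e.\ vectors $v\in V$, not functionals $\ell\in V^*$, and this confusion is the source of the dualization error. Fixing the action (and indexing by $v\in V$ rather than $\ell\in V^*$) brings your argument into line with the paper's proof.
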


\begin{proof}
Let us identify the morphisms $\sigma, \tau :\field [u] \otimes  H^* (V)
\rightrightarrows \field [u] \otimes T H^* (V)$. The source identifies with $
H^* (V \oplus \field)$ and the image with $\field ^V \otimes H^*
(V\oplus \field) $ and it suffices to identify each component of the 
morphisms $\sigma, \tau$. Namely, for $v \in V$, there is a surjective
evaluation map of Boolean
algebras $\field ^V \rightarrow \field$ which gives the component indexed by
$v$, which
is a morphism of unstable algebras
\[
 H^* (V \oplus \field) \rightarrow H^* (V \oplus \field).
\]

The $v$-component of $\sigma$ is the identity of $H^* (V \oplus
\field)$, independently of $v$.  It is a straightforward calculation to
show that the $v$-component of $\tau$ is induced by the automorphism of $V
\oplus \field$
which is the identity on $V$ and has component $\field \rightarrow V \oplus
\field$ given by $1 \mapsto (v, 1)$. Namely, as $v$ ranges through $V$, these
morphisms
run through the pointwise stabilizer $G_V \subset GL(V \oplus \field) $ of $V$.
The result follows from the definition of $\rtilde_1 H^* (V)$ as an equalizer.
\end{proof}

\begin{thm}
\label{thm:Rone-rtilde}
The functors $R_1$ and $\rtilde_1$ are canonically isomorphic on the full
subcategory of $\unst$ with objects the nilclosed unstable modules.
\end{thm}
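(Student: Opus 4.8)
The plan is to compare the two functors by exploiting the fact that on the full subcategory of nilclosed unstable modules, both $R_1$ and $\rtilde_1$ are left exact (for $R_1$ this is part of the recalled Proposition, for $\rtilde_1$ it is Proposition \ref{prop:fundamental-rtilde}(4)), and that this subcategory has enough injectives of a particularly simple form: finite products of copies of $H^*(V)$ for varying elementary abelian $2$-groups $V$ (more precisely, retracts of such; recall a nilclosed module embeds into its nil-localization, and the injective cogenerators of the category of nilclosed modules are the $H^*(V)$). The comparison of $R_1$ and $\rtilde_1$ on these injectives is already in hand: Proposition \ref{prop:R1-injectives} gives $R_1 H^*(V) \cong H^*(V\oplus\field)^{G_V}$ and Lemma \ref{lem:rtilde-injectives} gives the same description for $\rtilde_1 H^*(V)$. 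So the first step is to make this identification \emph{natural} in $V$: I would check that both isomorphisms are compatible with the covariant functoriality $V \mapsto T_V$ (equivalently with maps of elementary abelian groups), so that one obtains a natural isomorphism $R_1|_{\mathcal{I}} \cong \rtilde_1|_{\mathcal{I}}$, where $\mathcal{I}$ denotes the (essentially small) full subcategory of injective nilclosed modules.

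Second, I would promote this to an isomorphism on \emph{all} nilclosed modules. Given a nilclosed $M$, choose an injective copresentation $0 \to M \to I_0 \to I_1$ with $I_0, I_1 \in \mathcal{I}$; such exists because the category of nilclosed modules has enough injectives and these are (retracts of products of) the $H^*(V)$. Since both $R_1$ and $\rtilde_1$ are left exact on this subcategory, applying each functor yields exact sequences
\[
0 \to R_1 M \to R_1 I_0 \to R_1 I_1,
\qquad
0 \to \rtilde_1 M \to \rtilde_1 I_0 \to \rtilde_1 I_1 .
\]
The natural isomorphism on $\mathcal{I}$ furnishes a commuting ladder between these two sequences (commutativity of the relevant square follows from naturality applied to the single map $I_0 \to I_1$), and the five lemma — or simply the universal property of kernels — gives a canonical isomorphism $R_1 M \cong \rtilde_1 M$. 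One then checks this isomorphism is independent of the chosen copresentation and natural in $M$ by the standard comparison argument for left-exact functors: any two copresentations admit a common refinement, or one compares via maps of copresentations induced by maps of nilclosed modules, using again that $\mathcal{I}$-objects are relatively injective.

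The main obstacle, I expect, is the bookkeeping around \emph{which} injectives suffice and the naturality of the identification in Proposition \ref{prop:R1-injectives} and Lemma \ref{lem:rtilde-injectives}. The description $H^*(V\oplus\field)^{G_V}$ must be shown functorial in $V$ on \emph{both} sides simultaneously with matching identifications — and one must be careful that the category of nilclosed modules has enough injectives of the form $\prod_i H^*(V_i)$ rather than some larger class, so that the two-step copresentation above genuinely lands in $\mathcal{I}$. Once the naturality in $V$ is nailed down, the rest is the routine left-exact-functor comparison and presents no real difficulty. A cleaner alternative, which I would mention, is to observe directly that the natural transformation $R_1 \cong \rprime_1 \to \rtilde_1$ of Theorem \ref{THM:R1-rtilde}(3) is an isomorphism on the $H^*(V)$ by the two computations above, and then invoke left exactness plus enough injectives to conclude it is an isomorphism on all nilclosed modules — this avoids reconstructing the isomorphism by hand and makes the compatibility with the later parts of the paper transparent.
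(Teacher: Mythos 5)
Your plan is essentially the paper's proof: compare on $H^*(V)$ via Proposition \ref{prop:R1-injectives} and Lemma \ref{lem:rtilde-injectives}, extend to all nilclosed injectives, then use left exactness of both functors on an injective copresentation and the five-lemma. Two points of detail, however. First, the ``main obstacle'' you flag --- establishing naturality in $V$ of the two $H^*(V\oplus\field)^{G_V}$ identifications --- is sidestepped in the paper: since both $R_1$ and $\rtilde_1$ come with natural monomorphisms into $\field[u]\otimes -$, once one knows $R_1 I$ and $\rtilde_1 I$ agree \emph{as subobjects} of $\field[u]\otimes I$ for $I$ nilclosed injective, the isomorphism is automatically natural; no separate functoriality check of the invariant-theoretic description is needed. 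Second, you speak of (finite) \emph{products} of copies of $H^*(V)$, but the relevant injectives are direct sums (coproducts) of retracts of $H^*(V)$'s, and what gets used is that both functors commute with coproducts (not products); this matters since $T$-like functors do not commute with infinite products. Finally, your proposed ``cleaner alternative'' is circular: the identification $R_1\cong\rprime_1$ of Theorem \ref{THM:R1-rtilde}(3) is Theorem \ref{thm:rprime-R}, whose proof relies on the present Theorem \ref{thm:Rone-rtilde}.
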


\begin{proof}
 The functor $R_1$ is exact, commutes with colimits  and preserves
the class of nilclosed objects; similarly, the functor $\rtilde_1$ is 
left exact, commutes with coproducts and preserves the class of nilclosed
objects.

Proposition \ref{prop:R1-injectives} and Lemma \ref{lem:rtilde-injectives} show
that the functors
coincide on the injective nilclosed unstable modules $H^* (V)$.  This extends
formally to show that the functors $R_1$ and $\rtilde_1$ coincide on all
nilclosed
injective unstable modules, via the classification of the injective unstable
modules \cite{ls} (see \cite[Theorem 3.14.1]{sch}) and the fact that both
functors commute with coproducts.

The functors $R_1$ and $\rtilde_1$ are both equipped with natural inclusions in
$\field [u] \dash \unst$ to $\field [u] \otimes M$. Hence, it follows that the
functors $R_1$ and $\rtilde_1$ coincide on the full subcategory of nilclosed
injective unstable modules. 

If $M$ is a nilclosed unstable module, there is an injective copresentation
\[
 0  \rightarrow M \rightarrow I^0
\rightarrow I^1
\]
with $I^0, I^1$ nilclosed injective unstable modules. The result follows
formally by left exactness of $R_1$ and $\rtilde_1$, by applying the five-lemma.
\end{proof}

%%%%%%%%%%%%%%%%%%%%%%%%%%%%%%%%%%%%%%%%%%%%%%%%%%%%%%%%%%%%%%%%%%%%%%%%%%%%%%%%
\subsection{The Singer functor $R_1$ via left Kan extension}

The Singer functor $R_1$ is exact; hence, 
using Theorem \ref{thm:Rone-rtilde}, the functor $R_1$ can be recovered by using
left Kan extension of the functor $\rtilde_1$. Namely, the projective objects of
$\unst$ are nilclosed and define a full subcategory $\unstproj \subset
\unst$. Hence the functor $\rtilde _1$ coincides on $\unstproj$ with $R_1$, as
defined by Singer.

\begin{defn}
\label{lem:rprime}
 Let $\rprime_1 : \unst \rightarrow \field [u]\dash \unst$ be the left Kan
extension
of the functor $\rtilde_1 | _{\unstproj}$.
\end{defn}

Explicitly, for $M$ an unstable module, the object $\rprime_1 M $ is the
cokernel of $\rtilde_1 P_1 \rightarrow \rtilde_1 P_0$, where $P_1 \rightarrow
P_0 \rightarrow M \rightarrow 0$ is a projective presentation of $M$ in $\unst$.

\begin{lem}
\label{lem:rprime-rtilde}
There is a natural transformation of functors 
 $
 \rprime_1 \rightarrow \rtilde_1
 $
which is an isomorphism on projective unstable modules. 
\end{lem}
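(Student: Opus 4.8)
The plan is to read both statements off directly from the universal property of the left Kan extension. By definition $\rprime_1 = \mathrm{Lan}_\iota(\rtilde_1|_{\unstproj})$, where $\iota \colon \unstproj \hookrightarrow \unst$ is the inclusion; the Kan extension exists because $\field [u]\dash\unst$ is cocomplete, and it is left adjoint to restriction along $\iota$. Since $\rtilde_1 \colon \unst \rightarrow \field [u]\dash\unst$ restricts along $\iota$ to precisely $\rtilde_1|_{\unstproj}$, the identity transformation of $\rtilde_1|_{\unstproj}$ corresponds under this adjunction to a natural transformation $\rprime_1 \Rightarrow \rtilde_1$ --- equivalently, the counit of the adjunction evaluated at $\rtilde_1$. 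This is the asserted map.

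In concrete terms, given a projective presentation $P_1 \rightarrow P_0 \rightarrow M \rightarrow 0$, the composite $\rtilde_1 P_1 \rightarrow \rtilde_1 P_0 \rightarrow \rtilde_1 M$ is zero (apply the functor $\rtilde_1$ to a composable pair with zero composite), so the morphism $\rtilde_1 P_0 \rightarrow \rtilde_1 M$ factors through $\rprime_1 M = \mathrm{coker}(\rtilde_1 P_1 \rightarrow \rtilde_1 P_0)$; the resulting map $\rprime_1 M \rightarrow \rtilde_1 M$ is the comparison morphism. If $P$ is itself projective one may take the trivial presentation $0 \rightarrow P \rightarrow P \rightarrow 0$ with middle arrow the identity, so that $\rprime_1 P = \mathrm{coker}(\rtilde_1 0 \rightarrow \rtilde_1 P) = \rtilde_1 P$ and the comparison morphism is the identity; this gives the second assertion. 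Conceptually, the latter is the general fact that a left Kan extension along a fully faithful functor recovers the original functor on the subcategory, i.e. that the unit $\mathrm{id} \Rightarrow \iota^* \rprime_1$ is an isomorphism.

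The argument is entirely formal, so there is no real obstacle here. The only point meriting attention is that the comparison morphism is well defined independently of the chosen projective presentation and is natural in $M$; this is immediate from the universal property above (or, in the explicit picture, from the comparison theorem for projective presentations, together with the fact that $\rtilde_1$ is additive, since it commutes with coproducts by Proposition \ref{prop:fundamental-rtilde}).
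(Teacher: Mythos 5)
Your argument is correct and is exactly the content the paper summarizes with the single word ``Formal''; the paper offers no further detail, so your spelled-out version (the counit of the Kan-extension adjunction, the explicit well-definedness via projective presentations, and the identification on projectives via the trivial presentation) is a faithful expansion of the intended proof rather than a different route.
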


\begin{proof}
Formal. 
\end{proof}

\begin{rem}
This result does not
imply {\em a priori} that  $\rprime_1 \rightarrow
\rtilde _1$ is a natural monomorphism.
\end{rem}

\begin{thm}
\label{thm:rprime-R}
The functors $\rprime_1 $ and $R_1$ are canonically isomorphic.
\end{thm}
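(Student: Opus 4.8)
The plan is to show that the left Kan extension $\rprime_1$ agrees with $R_1$ by exploiting that $R_1$ is right exact (being exact) and that both functors agree on projectives. First I would recall that since $\unstproj$ is a full subcategory of $\unst$ such that every unstable module admits a presentation $P_1 \to P_0 \to M \to 0$ by projectives, a right exact functor out of $\unst$ is determined by its restriction to $\unstproj$: precisely, for any right exact $F$, one has $FM \cong \mathrm{coker}(FP_1 \to FP_0)$. Applying this to $F = R_1$ (which is exact, hence right exact, by the Proposition in Section~\ref{sect:review}) gives $R_1 M \cong \mathrm{coker}(R_1 P_1 \to R_1 P_0)$. On the other hand, by Theorem~\ref{thm:Rone-rtilde} the functors $R_1$ and $\rtilde_1$ agree on nilclosed modules, and projective unstable modules are nilclosed, so $R_1 P_i \cong \rtilde_1 P_i$ naturally for $i = 0, 1$. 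Hence $R_1 M$ is the cokernel of $\rtilde_1 P_1 \to \rtilde_1 P_0$, which is exactly the description of $\rprime_1 M$ given after Definition~\ref{lem:rprime}.

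The key steps, in order, are: (i) invoke the universal property of left Kan extension along the inclusion $\unstproj \hookrightarrow \unst$, or equivalently the explicit cokernel formula, to get the natural transformation $\rprime_1 \to R_1$ extending the isomorphism $\rtilde_1|_{\unstproj} \cong R_1|_{\unstproj}$ (this uses that $R_1$, being right exact, receives a unique such extension from its restriction to projectives); (ii) check that this natural transformation is an isomorphism by the cokernel comparison above, using that $P_0, P_1$ are nilclosed and that Theorem~\ref{thm:Rone-rtilde} identifies $R_1$ with $\rtilde_1$ there; (iii) note independence of the chosen presentation, which is automatic from the Kan extension formulation. One should be a little careful to phrase step (i) so that the natural transformation goes in the direction matching the statement of Theorem~\ref{THM:R1-rtilde}(3), namely $R_1 \cong \rprime_1 \to \rtilde_1$; this is fine since the isomorphism $\rprime_1 \cong R_1$ is what is being proved, and the map to $\rtilde_1$ is the one from Lemma~\ref{lem:rprime-rtilde}.

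I expect the only real subtlety — and it is minor — to be the precise bookkeeping of naturality: one must ensure that the isomorphisms $R_1 P_i \cong \rtilde_1 P_i$ from Theorem~\ref{thm:Rone-rtilde} are natural not just in each $P_i$ but compatibly with the map $P_1 \to P_0$, so that they assemble into an isomorphism of the two cokernel diagrams. This is guaranteed because the isomorphism in Theorem~\ref{thm:Rone-rtilde} is a natural isomorphism of functors on the full subcategory of nilclosed modules, and $P_1 \to P_0$ is a morphism in that subcategory; hence the comparison is functorial and the five-lemma-style cokernel argument applies verbatim. Everything else is formal homological algebra: right exact functors on a category with enough projectives are left Kan extensions of their restrictions to the projectives. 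So the proof is essentially a two-line deduction combining exactness of $R_1$, Theorem~\ref{thm:Rone-rtilde}, and the defining cokernel formula for $\rprime_1$.
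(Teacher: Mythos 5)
Your proof is correct and spells out precisely what the paper leaves as a "formal consequence": the exactness (hence right exactness) of $R_1$, the fact that projectives are nilclosed so Theorem~\ref{thm:Rone-rtilde} identifies $R_1$ with $\rtilde_1$ on them, and the cokernel description of the left Kan extension $\rprime_1$. This is the same argument the paper intends, just carried out in detail.
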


\begin{proof}
 Formal consequence of Theorem \ref{thm:Rone-rtilde}, Lemma
\ref{lem:rprime-rtilde} and the fact that $R_1$ is
exact.\end{proof}

\begin{rem}
 It is worthwhile stressing that the proof of Theorem \ref{thm:rprime-R} relies
on two fundamental properties of the Singer functor $R_1$: that it is exact and
that $R_1 H^* (V)$ is a nilclosed unstable module, which is identified by 
Proposition \ref{prop:R1-injectives}.
\end{rem}

\section{The functor $R_1$ and $\fix$}
\label{sect:fix}

Recall that there is a natural isomorphism 
 $
 \fix (N) \cong \field \otimes _{T\field [u]} TN,
$ 
where $\field$ is a $T \field [u]$-algebra via the morphism of unstable
algebras $\iota : T \field [u] \rightarrow \field$ adjoint to the identity
morphism of $\field [u]$ (Cf. \cite[Section 
4.4.3]{lannes}).

\begin{lem}
\label{lem:fix-tau}
Let $M$ be an unstable module; under the natural isomorphisms $\fix (\field [u]
\otimes M) \cong TM$ and $\fix (\field [u] \otimes TM) \cong T^2 M$, the
morphisms 
\[
\fix (\sigma_M),  \fix (\tau_M) : \fix (\field [u] \otimes M)  
\rightarrow 
\fix(\field [u] \otimes TM)
\]
identify respectively  with the morphisms $T M \rightarrow T_{\field ^2} M \cong
T^2 M$ induced by the inclusion $i_1 : \field \rightarrow \field ^2$ of the
first factor (respectively by the diagonal $\delta : \field
\rightarrow \field ^2$).
\end{lem}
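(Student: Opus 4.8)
The plan is to recast $\fix(\sigma_M)$ and $\fix(\tau_M)$ in terms of the comonad structure on $T$. The natural isomorphism $T\cong\fix(\field[u]\otimes-)$ exhibits the adjunction $(T,\ \field[u]\otimes-)$ as the composite of $(\field[u]\otimes-,\ \mathrm{forget})$ and $(\fix,\ \field[u]\otimes-)$; in particular, since $\fix$ is left adjoint to $\field[u]\otimes-\colon\unst\to\field[u]\dash\unst$, the endofunctor $\fix\circ(\field[u]\otimes-)\cong T$ of $\unst$ is a comonad, with counit $\epsilon\colon T\Rightarrow\mathrm{Id}$ and comultiplication $\Delta\colon T\Rightarrow T\circ T\cong T_{\field\oplus\field}$. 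Throughout I shall use the functoriality $V\mapsto T_V$, the natural isomorphisms $T_V\circ T_W\cong T_{V\oplus W}$ and $T_0\cong\mathrm{Id}$, and the injectivity of $g\mapsto T_g$ on linear maps. The essential point is that, transported along these isomorphisms, the comonad structure on $T=T_\field$ is the one induced by the (unique) comonoid structure on $\field$ in $(\mathrm{Vect}_\field,\oplus,0)$: thus $\epsilon$ is the canonical projection $T=T_\field\twoheadrightarrow T_0=\mathrm{Id}$, i.e.\ $T_{\field\to 0}$, and $\Delta=T_\delta$.

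The assertion for $\sigma_M$ is immediate. By Definition \ref{def:sigma-tau}(2), $\sigma_M=\field[u]\otimes(T_\iota)_M$ for $\iota\colon 0\hookrightarrow\field$, where $(T_\iota)_M\colon M\cong T_0 M\to T_\field M=TM$. Naturality of $T\cong\fix(\field[u]\otimes-)$ yields $\fix(\sigma_M)\cong T\bigl((T_\iota)_M\bigr)=(T_\field T_\iota)_M$, and under $T_\field\circ T_W\cong T_{\field\oplus W}$ this natural transformation corresponds to the one induced by $\mathrm{id}_\field\oplus\iota\colon\field\oplus 0\to\field\oplus\field$, which is $i_1$.

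For $\tau_M$, the first and crucial step is to identify $\tau_M$ with the unit of the adjunction $(\fix,\ \field[u]\otimes-)$ at the object $\field[u]\otimes M$. By the standard description of the unit of a composite adjunction, the adjunction unit $\bar\eta_M\colon M\to\field[u]\otimes TM$ of $(T,\ \field[u]\otimes-)$ factors as
\[
 M\xrightarrow{\ \mathrm{can}\ }\field[u]\otimes M\xrightarrow{\ \eta'_{\field[u]\otimes M}\ }\field[u]\otimes\fix(\field[u]\otimes M)\cong\field[u]\otimes TM,
\]
where $\mathrm{can}$ is the canonical inclusion $1\otimes(-)$ and $\eta'$ is the unit of $(\fix,\ \field[u]\otimes-)$. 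As $\tau_M$ is by definition the unique $\field[u]$-linear extension of $\bar\eta_M$, uniqueness of such extensions forces $\tau_M=\eta'_{\field[u]\otimes M}$ under the stated identification. Applying $\fix$ and using naturality of $T\cong\fix(\field[u]\otimes-)$ once more,
\[
 \fix(\tau_M)=\fix\bigl(\eta'_{\field[u]\otimes M}\bigr)=\Delta_M\colon \fix(\field[u]\otimes M)\cong TM\longrightarrow \fix(\field[u]\otimes TM)\cong T^2 M,
\]
since $\Delta=\fix(\eta'_{\field[u]\otimes(-)})$ is precisely the comultiplication of the comonad $T$. By the identification of the comonad structure recalled above, $\Delta_M=(T_\delta)_M$, which is the claim.

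The step I expect to be the main obstacle is the identification, used above, of the comonad structure on $T$ coming from the adjunction with the comonoid-induced one — explicitly, the equality $\Delta=T_\delta$ (the corresponding statement $\epsilon=T_{\field\to 0}$ being the standard identification of the counit of $(\fix,\ \field[u]\otimes-)$ with the projection $T\twoheadrightarrow\mathrm{Id}$). A priori one knows only that $\Delta$ obeys the comonad axioms; translating the two counit axioms through $T_V\circ T_W\cong T_{V\oplus W}$, together with $\epsilon=T_{\field\to0}$, gives $T_{\pi_1}\circ\Delta=T_{\pi_2}\circ\Delta=\mathrm{id}_T$ for the projections $\pi_1,\pi_2\colon\field\oplus\field\to\field$, and by injectivity of $g\mapsto T_g$ this pins down $\Delta=T_\delta$ once it is known that $\Delta$ has the form $T_g$. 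The latter is the point carrying the genuine content and admits two treatments: either one verifies that $V\mapsto T_V$ underlies a (lax) monoidal functor $(\mathrm{Vect}_\field,\oplus,0)\to(\mathrm{End}(\unst),\circ,\mathrm{Id})$ compatible with the adjunction $(\fix,\ \field[u]\otimes-)$ — the diagonal $\field\to\field\oplus\field$ being dual to the multiplication of $H^*(\field)=\field[u]$ — or one verifies the assertion of the lemma by hand on the injective cogenerators $M=H^*(V)$, where the proof of Lemma \ref{lem:rtilde-injectives} makes $\sigma_M$ and $\tau_M$ completely explicit (the components of $\tau_M$ being induced by the automorphisms $1\mapsto(v,1)$ of $V\oplus\field$), and propagates by naturality.
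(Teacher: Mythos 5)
Your reduction is a genuinely different and attractive route from the one the paper takes, but it leaves the crux of the lemma as a sketched gap. The paper's proof is direct: it applies $T$ to $\sigma_M$, $\tau_M$, projects $T\field[u]$ onto its degree-zero Boolean algebra $\field^\field$, reads off the $w$-components of the resulting $\field^\field$-module morphisms $\field^\field\otimes TM\rightrightarrows\field^\field\otimes T^2 M$ by composing with evaluation $\field^\field\to\field$, and then recovers $\fix$ as the $w=1$ component via $\field\otimes_{\field^\field}-$. Your organization around the comonad structure on $T\cong\fix\circ(\field[u]\otimes-)$ is correct as far as it goes: the identification of $\fix(\sigma_M)$ with $T_{i_1}$ is clean, and the observation that $\tau_M$ \emph{is} the unit $\eta'_{\field[u]\otimes M}$ of $\fix\dashv(\field[u]\otimes-)$ (via the factorization of the unit of the composite adjunction plus uniqueness of $\field[u]$-linear extensions) is a nice structural point, reducing the lemma to showing that the comultiplication $\Delta$ equals $T_\delta$.

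That last equality is, however, exactly where the content of the lemma lives, and as you acknowledge, you do not prove it. Your counit-axiom argument pins down $\Delta$ only within the subset of natural transformations of the form $T_g$, which is precisely what remains to be established. Of the two routes you propose, route (a) — identifying the mate of $\Delta$ under the adjunctions $T_{\field^2}\dashv\field[u_1,u_2]\otimes-$ with the algebra multiplication $\field[u]\otimes\field[u]\to\field[u]=H^*(\delta)$, and the mate of $T_\delta$ likewise — is the conceptually correct way to close the gap, but it is described rather than carried out, and it is exactly the kind of explicit identification the paper performs directly via $\field^\field$. Route (b) does not work as stated: the modules $H^*(V)$ are not injective cogenerators of $\unst$ (one would need the full family $H^*(V)\otimes J(n)$, or else an a priori result that natural transformations $T\to T^2$ are detected on the $H^*(V)$), so agreement there does not propagate by naturality to arbitrary $M$; moreover Lemma~\ref{lem:rtilde-injectives} gives the components of $\tau_{H^*(V)}$ as a $\field^V$-module map, which would still need the same passage to $\fix$ that the present lemma is meant to supply. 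In short: correct skeleton, different from the paper's, but the decisive step is only indicated, and one of the two suggested fixes is flawed.
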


\begin{proof}
The degree zero part of $T \field [u]$ is the Boolean algebra
$\field^\field$ and projection onto the degree zero part defines a morphism of
unstable algebras $T \field [u] \twoheadrightarrow \field ^\field$. Under the
$T$-functor, the
morphisms $\sigma_M, \tau_M$ give morphisms of $T\field [u] \dash \unst$ and
hence morphisms of  $\field^\field \dash \unst$:
\[
 \field ^\field \otimes TM 
\rightrightarrows
\field ^\field \otimes T^2 M.
\]

These are determined by the morphisms of unstable modules
\[
 TM 
\rightrightarrows
\field ^\field \otimes T^2 M.
\]

The unstable module 
$\field^\field \otimes TM$ has two components, indexed by the elements $w$ of
$\field$. The corresponding components of the morphisms are recovered by
composing with the morphism induced by the respective evaluation maps $\field
^\field \rightarrow \field$.

The component of the morphism $\tau_M$ corresponding to $w \in \field$ is the 
morphism $TM \rightarrow T^2 M$ induced by the
linear map $\field\rightarrow \field ^2$,  $1 \mapsto (1, w)$. The 
identification of $\fix (\tau_M)$ follows by passing to the quotient via
$\field \otimes_{\field ^\field} -$, which  corresponds to the component $w=1$.

The identification of $\fix (\sigma_M)$ is straightforward.
\end{proof}

\begin{prop}
\label{prop:fix-rtilde}
 Let $M$ be an unstable module. The adjoint to the canonical morphism $\rtilde_1
M \rightarrow
\field [u] \otimes M$ is  a natural isomorphism
\[
 \fix \rtilde_1 M \stackrel{\cong}{\rightarrow} M .
\]
\end{prop}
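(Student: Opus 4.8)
The plan is to compute $\fix \rtilde_1 M$ by exploiting the fact that $\fix$, being a left adjoint, is right exact, and that $\rtilde_1 M$ is defined as a (reflexive) equalizer whose two parallel arrows land in $\field[u] \otimes TM$. First I would observe that applying $\fix$ to the equalizer diagram does not automatically yield an equalizer, since $\fix$ is only right exact; however, the relevant subtlety is handled because $\rtilde_1 M$ sits inside $\field[u]\otimes M$ as a \emph{kernel} of the difference $\tau_M - \sigma_M$, and $\field[u]\otimes M \to \field[u]\otimes TM$ factors through a short exact sequence whose cokernel term I can control. So the first concrete step is to write the defining diagram as a left-exact sequence
\[
0 \to \rtilde_1 M \to \field[u]\otimes M \xrightarrow{\tau_M - \sigma_M} \field[u]\otimes TM,
\]
and to identify, using Lemma \ref{lem:rtilde-redT} (or directly), the image of $\tau_M - \sigma_M$ with $\overline{\field[u]} \otimes \redT M$, giving a \emph{short} exact sequence $0 \to \rtilde_1 M \to \field[u]\otimes M \to \overline{\field[u]}\otimes\redT M \to 0$.

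Next I would apply $\fix$ to this short exact sequence. By right exactness of $\fix$ we get an exact sequence
\[
\fix \rtilde_1 M \to \fix(\field[u]\otimes M) \to \fix(\overline{\field[u]}\otimes \redT M) \to 0,
\]
and using the natural isomorphism $\fix(\field[u]\otimes -) \cong T(-)$ the middle term becomes $TM$. The key computation is then the third term together with the map $TM \to \fix(\overline{\field[u]}\otimes\redT M)$: using Lemma \ref{lem:fix-tau}, the map $\field[u]\otimes M \to \field[u]\otimes TM$ induced by $\tau_M - \sigma_M$ becomes, after $\fix$, the difference of the maps $TM \to T^2M$ induced by $i_1$ and by the diagonal $\delta$, which is precisely the projection $TM \twoheadrightarrow \redT M$ followed by an inclusion $\redT M \hookrightarrow T^2 M$ — more precisely, the composite $TM \to T^2 M \to \redT M \otimes (\text{something})$ collapses to the canonical splitting map. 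I would track this carefully to identify $\fix(\overline{\field[u]}\otimes\redT M)$ with $\redT M$ (or $T\redT M$ modulo the relevant identification) and the map $TM \to \redT M$ with the canonical projection from the splitting $TM \cong M \oplus \redT M$.

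Granting that identification, the right-exact sequence reads $\fix\rtilde_1 M \to TM \to \redT M \to 0$ where the second map is the canonical projection; hence the image of $\fix\rtilde_1 M \to TM$ is exactly the kernel $M$ of $TM \twoheadrightarrow \redT M$. To upgrade this to an \emph{isomorphism} $\fix\rtilde_1 M \xrightarrow{\cong} M$ I would need to check that $\fix\rtilde_1 M \to TM$ is a monomorphism, i.e. that no information is lost; this is the step I expect to be the main obstacle, since $\fix$ does not preserve monomorphisms in general. I would resolve it by a naturality-plus-reduction argument: the claim is natural in $M$ and both sides commute with colimits (by right exactness of $\fix$ and Proposition \ref{prop:fundamental-rtilde}(5)), and both sides are exact enough on the subcategory $\unstproj$ of projectives; so it suffices to verify the isomorphism on free unstable modules $F(n)$, and then on the generators $H^*(V)$ via the identification $\rtilde_1 H^*(V) \cong H^*(V\oplus\field)^{G_V}$ of Lemma \ref{lem:rtilde-injectives}, where $\fix$ of the right-hand side can be computed explicitly as $H^*(V)$ using the formula $\fix(N) \cong \field \otimes_{T\field[u]} TN$ and the fact that taking $G_V$-invariants commutes appropriately with $T$. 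Checking the composite $\fix\rtilde_1 M \to \fix(\field[u]\otimes M) \cong TM$ lands in $M\subset TM$ and is an iso on this test family, combined with the five lemma applied to a projective presentation, then gives the result in general.
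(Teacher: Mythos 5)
Your proposal has two genuine gaps, and a simpler route was available.

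First, the claimed short exact sequence
\[
0 \to \rtilde_1 M \to \field[u]\otimes M \xrightarrow{\overline{\tau}_M} \overline{\field[u]}\otimes\redT M \to 0
\]
does not hold: $\overline{\tau}_M$ is not surjective in general. Its image is a strictly smaller submodule (denoted $C_1 M$ later in the paper, Definition~\ref{def:Q1}), and the cokernel $C_2 M$ (Definition~\ref{def:Q2}) is nonzero even for nilclosed $M$ --- that is precisely the content of Theorem~\ref{thm:R1-Q2-nilclosed}. So the subsequent identification of $\fix$ applied to your third term with $\redT M$ rests on a false premise, and in any case you would have to compute $\fix$ of an image whose structure is not yet known at this stage of the argument.

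Second, and more importantly, you treat $\fix$ as merely right exact and then flag the lack of injectivity as "the main obstacle." But $\fix$ is in fact \emph{exact} (this is a standard fact from the Lannes--Zisman theory of $\field[u]\dash\unst$; the paper invokes it repeatedly, e.g.\ in the proofs of Theorem~\ref{thm:fix-rprime} and Theorem~\ref{thm:R1-Q2-nilclosed}). Once one uses exactness, the whole detour through a (nonexistent) short exact sequence and the injectivity worry disappears: $\fix$ preserves the equalizer defining $\rtilde_1 M$, so $\fix\rtilde_1 M$ is the equalizer of $\fix(\sigma_M), \fix(\tau_M): TM \rightrightarrows T^2 M$, and Lemma~\ref{lem:fix-tau} identifies this with the \emph{split} equalizer $M \cong T_{\field^0}M \to T_\field M \rightrightarrows T_{\field^2}M$ induced by $0 \to \field \to \field^2$, split by $\field \to 0$ and the projection $p_2$. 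That gives the isomorphism $\fix\rtilde_1 M \cong M$ directly. Your fallback reduction is also muddled: you propose to "verify on free unstable modules $F(n)$, and then on the generators $H^*(V)$," but these are two different families (projectives versus injectives), and $F(n)$ is not of the form $H^*(V)$; Lemma~\ref{lem:rtilde-injectives} does not feed into a projective-presentation argument in the way you suggest.
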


\begin{proof}
 The functor $\fix$ is exact, hence 
\[
 \fix \rtilde_1 M \cong \ker \{ TM \cong T_\field M \rightrightarrows T^2 M
\cong T_{\field^2 M} \},
\]
the equalizer of $\fix (\sigma_M)$ and $\fix (\tau_M)$; these morphisms 
are given by
Lemma \ref{lem:fix-tau}. The result follows by
identifying the equalizer diagram with the  split equalizer
\[
 M \cong T_{\field^0} M 
\rightarrow 
T_\field M 
\rightrightarrows 
T_{\field^2} M,
\]
which is split via the projections $T_\field M \rightarrow T_{\field^0} M$ and
$T_{\field^2} M
\rightarrow T_\field M$ induced respectively by $\field \rightarrow \field ^0$
and the projection $p_2 : \field^2 \rightarrow \field$.
\end{proof}

\begin{thm}
\label{thm:fix-rprime}
Let $M$ be an unstable module. The natural transformation $\rprime_1 M
\rightarrow \rtilde_1 M$ induces an isomorphism 
\[
 \fix \rprime_1 M \stackrel{\cong}{\rightarrow} \fix \rtilde_1 M
\]
and hence an isomorphism 
 $
 \fix \rprime_1 M \stackrel{\cong}{\rightarrow} M $.
\end{thm}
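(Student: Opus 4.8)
The plan is to deduce Theorem~\ref{thm:fix-rprime} by combining the previously established facts: that $\rprime_1$ is the left Kan extension of $\rtilde_1|_{\unstproj}$ (Definition~\ref{lem:rprime}), that $\fix$ is exact and is itself a left adjoint (hence commutes with all colimits), and that $\fix \rtilde_1 P \cong P$ naturally for every unstable module $P$ by Proposition~\ref{prop:fix-rtilde}. The point is that the composite $\fix \rprime_1$ is \emph{also} a left Kan extension, namely of $(\fix \rtilde_1)|_{\unstproj} \cong \mathrm{id}|_{\unstproj}$, and the left Kan extension of the identity restricted to $\unstproj$ along the inclusion $\unstproj \hookrightarrow \unst$ is the identity functor, because every unstable module has a projective presentation.

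\medskip

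Concretely, first I would fix an unstable module $M$ and choose a projective presentation $P_1 \to P_0 \to M \to 0$ in $\unst$. By the explicit description of $\rprime_1$ given after Definition~\ref{lem:rprime}, $\rprime_1 M$ is the cokernel of $\rtilde_1 P_1 \to \rtilde_1 P_0$ in $\field[u]\dash\unst$. Next, apply $\fix$: since $\fix$ is exact (this is used in the proof of Proposition~\ref{prop:fix-rtilde}), $\fix \rprime_1 M$ is the cokernel of $\fix\rtilde_1 P_1 \to \fix\rtilde_1 P_0$. Now invoke Proposition~\ref{prop:fix-rtilde}: the natural isomorphisms $\fix\rtilde_1 P_i \stackrel{\cong}{\to} P_i$ ($i=0,1$) are compatible with the map induced by $P_1 \to P_0$, by naturality of the isomorphism of Proposition~\ref{prop:fix-rtilde} in the module variable. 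Hence $\fix\rprime_1 M$ is naturally identified with the cokernel of $P_1 \to P_0$, which is $M$. This gives the natural isomorphism $\fix\rprime_1 M \stackrel{\cong}{\to} M$ directly.

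\medskip

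To obtain the statement exactly as phrased — that the natural transformation $\rprime_1 M \to \rtilde_1 M$ of Lemma~\ref{lem:rprime-rtilde} induces the isomorphism $\fix\rprime_1 M \stackrel{\cong}{\to} \fix\rtilde_1 M$, and that this then agrees with $M$ via Proposition~\ref{prop:fix-rtilde} — I would observe that the identification above is compatible with the comparison map. Indeed, on projectives $\rprime_1 \to \rtilde_1$ is an isomorphism (Lemma~\ref{lem:rprime-rtilde}), so applying $\fix$ to the diagram
\[
\xymatrix{
\fix\rtilde_1 P_1 \ar[r] \ar[d]_{\cong} & \fix\rtilde_1 P_0 \ar[d]^{\cong} \\
\fix\rprime_1 P_1 \ar[r] & \fix\rprime_1 P_0
}
\]
(vertical maps the inverses of $\fix$ applied to $\rprime_1 \to \rtilde_1$) and passing to cokernels shows $\fix(\rprime_1 M \to \rtilde_1 M)$ is an isomorphism; composing with $\fix\rtilde_1 M \stackrel{\cong}{\to} M$ from Proposition~\ref{prop:fix-rtilde} yields the claim.

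\medskip

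The main obstacle, such as it is, is bookkeeping the naturality: one must check that the isomorphisms of Proposition~\ref{prop:fix-rtilde} assemble into a natural transformation $\fix\rtilde_1 \Rightarrow \mathrm{id}$ compatible with morphisms of unstable modules (which is already implicit in the word ``natural'' there) and that the comparison transformation $\rprime_1 \to \rtilde_1$ is the identity-on-projectives one used to build $\rprime_1$, so that the cokernel computation of $\rprime_1 M$ and the passage through $\fix$ really do commute. There is no substantive analytic difficulty: everything reduces to the exactness of $\fix$, the universal property of left Kan extension along $\unstproj \hookrightarrow \unst$, and Proposition~\ref{prop:fix-rtilde}. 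I would state the argument as a short formal deduction rather than spelling out diagrams in full.
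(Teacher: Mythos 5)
Your argument is correct and is essentially the paper's own proof: both choose a projective presentation $P_1\to P_0\to M\to 0$, apply the exactness of $\fix$ together with Proposition~\ref{prop:fix-rtilde} on the projective terms, and then compare rows (the paper via the five-lemma, you via identifying cokernels). The Kan-extension framing at the start is a pleasant conceptual gloss but amounts to the same computation.
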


\begin{proof}
 Choose a projective presentation $P_1 \rightarrow P_0 \rightarrow M
\rightarrow 0$ of $M$ in $\unst$. Then the natural transformation 
 $
 \rprime_1 \rightarrow \rtilde_1
 $
induces a commutative diagram in $\unst$:
\[
 \xymatrix{
\fix \rprime_1 P_1
\ar[r]
\ar[d]_\cong 
&
\fix \rprime_1 P_0
\ar[d]_\cong 
\ar[r]
&
\fix \rprime_1 M
\ar[r]
\ar[d]
&
0\\
\fix \rtilde_1 P_1 
\ar[r]
&
\fix \rtilde_1 P_0 
\ar[r]
&
\fix \rtilde_1 M
\ar[r]
&
0
}
\]
in which the top row is exact by the exactness of $\fix$ and the definition of
$\rprime_1$ and the lower row is exact since it is canonically isomorphic to
$P_1 \rightarrow P_0 \rightarrow M \rightarrow 0$, by Proposition
\ref{prop:fix-rtilde}. The result follows from the
five-lemma.
\end{proof}

\begin{rem}
 Theorem  \ref{thm:rprime-R} was known to Jean Lannes, for the functor $R_1$,
using a different argument. The idea that the copresentation of $\rtilde_1$
should lead to a conceptual proof of this fact is due to Lannes.
\end{rem}

\section{The functor $\rprime_1$ on reduced unstable modules}
\label{sect:reduced}

It is a key fact that the functor $\rprime_1$ coincides with $\rtilde_1$ on
reduced unstable
modules. The aim of this section is to provide a proof of this fact {\em
without} appealing to Theorem \ref{thm:rprime-R} and known  properties of $R_1$.
 (The reader happy to work with properties of $\rprime_1 M$ deduced from
properties of $R_1 M$ via Theorem \ref{thm:rprime-R}, will skip most of this
section, passing directly to Proposition \ref{prop:red-free-R1}, the essential
ingredient to the proof of the main result,
Theorem \ref{thm:rprime-reduced-rtilde}.)

The arguments are based on general properties of modules over a graded
polynomial ring, given in the following section.

%%%%%%%%%%%%%%%%%%%%%%%%%%%%%%%%%%%%%%%%%%%%%%%%%%%%%%%%%%%%%%%%%%%%%%%%%%%%%
\subsection{On graded modules over polynomial rings}

Let $\kfield$ be a field and let $\kfield [u]$ be the graded polynomial
$\kfield$-algebra on a generator $u$ of degree one. The canonical augmentation
$\epsilon : \kfield [u] \rightarrow \kfield$ is defined by $u \mapsto 0$ and the
augmentation ideal is written $\overline{\kfield [u]}$.

\begin{lem}
\label{lem:tfree-quotient}
Let $M$ be a graded connected $\kfield$-vector space and $X$ be a graded
$\kfield [u]$-submodule of $\kfield [u] \otimes M$. Suppose that the diagram of
graded  $\kfield [u]$-modules
\[
\xymatrix{
X \ar[r]
\ar[d]
&
\kfield [u] \otimes M
\ar@{^(->}[d]
\\
X [u^{-1}] 
\ar[r]
&
\kfield [u^{\pm 1}] \otimes M
}
\]
is cartesian. Then the quotient module $(\kfield [u] \otimes M)/ X$ is $\kfield
[u]$-free.
\end{lem}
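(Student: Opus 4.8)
\textbf{Proof plan for Lemma \ref{lem:tfree-quotient}.}

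The plan is to prove that the quotient $Q := (\kfield[u]\otimes M)/X$ is $u$-torsion free, since by Lemma \ref{lem:u-torsion-free} a graded connected $\kfield[u]$-module is $\kfield[u]$-free precisely when it is $u$-torsion free. (Connectivity of $Q$ is immediate: it is a quotient of the connected module $\kfield[u]\otimes M$.) So suppose $z \in \kfield[u]\otimes M$ represents a class $\bar z \in Q$ with $u\bar z = 0$, i.e.\ $uz \in X$; the goal is to show $z \in X$ already.

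The key is to exploit the cartesian hypothesis. First I would observe that localization at $u$ is exact, so applying $(-)[u^{-1}]$ to $0 \to X \to \kfield[u]\otimes M \to Q \to 0$ gives $0 \to X[u^{-1}] \to \kfield[u^{\pm 1}]\otimes M \to Q[u^{-1}] \to 0$, and that the canonical map $X[u^{-1}] \to \kfield[u^{\pm1}]\otimes M$ appearing in the square is exactly this inclusion (in particular $X[u^{-1}]$ maps injectively, so no information is lost in passing to the localized picture). Now, from $uz \in X$ we get, after inverting $u$, that $z = u^{-1}(uz)$ lies in the image of $X[u^{-1}]$ in $\kfield[u^{\pm1}]\otimes M$. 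On the other hand $z$ visibly lies in $\kfield[u]\otimes M$. The cartesian property of the square says precisely that $X$ is the pullback (intersection, inside $\kfield[u^{\pm1}]\otimes M$) of $\kfield[u]\otimes M$ and the image of $X[u^{-1}]$; hence $z \in X$, which is what we wanted. Therefore $Q$ is $u$-torsion free, hence $\kfield[u]$-free.

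The only mild subtlety — the step I would be most careful about — is making sure the cartesian square is being read correctly as an \emph{intersection} statement inside $\kfield[u^{\pm1}]\otimes M$, rather than as a more abstract pullback that might a priori only inject into that intersection. Concretely one must check that the two maps into $\kfield[u^{\pm1}]\otimes M$ (from $\kfield[u]\otimes M$ via the standard inclusion $m \mapsto 1\otimes m$, and from $X[u^{-1}]$ via localization of $X\hookrightarrow \kfield[u]\otimes M$) are genuinely monomorphisms with the stated compatibility, so that the pullback is literally $\{\,w \in \kfield[u^{\pm1}]\otimes M : w \in \kfield[u]\otimes M \text{ and } w \in \mathrm{im}(X[u^{-1}])\,\}$; once this is granted, the argument above is immediate. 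Note that this lemma applies to $X = \rtilde_1 M$ by Proposition \ref{prop:fundamental-rtilde}(6), recovering that $(\field[u]\otimes M)/\rtilde_1 M$ is $\field[u]$-free, consistent with Proposition \ref{prop:fundamental-rtilde}(1)--(2).
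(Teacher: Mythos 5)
Your proof is correct and takes essentially the same approach as the paper: both reduce to showing $u$-torsion freeness of the quotient by exploiting the cartesian square and the exactness of localization (the paper phrases it dually, noting that cartesianness of the square with two monomorphisms means the induced map of cokernels $(\kfield[u]\otimes M)/X \hookrightarrow (\kfield[u^{\pm1}]\otimes M)/X[u^{-1}]$ is injective, and the target is $u$-torsion free).
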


\begin{proof}
It suffices to show that the quotient module is $u$-torsion free. This is
immediate, since $\kfield [u] \otimes M / X$ embeds in $\kfield [u^{\pm 1}]
\otimes M / X[u^{\pm 1}]$ by the hypothesis that the diagram is cartesian.
\end{proof}

\begin{rem}
\ 
\begin{enumerate}
	\item 
	A graded sub $\kfield [u]$-module $X$ of
$\kfield[u] \otimes M $ is free, hence admits a homogeneous space of generators
$W \subset \kfield [u] \otimes M$ such that $X \cong \kfield [u] \otimes W$.
\item
The kernel of $\kfield [u] \otimes M \stackrel{\epsilon
\otimes M} {\rightarrow} M$ is $\overline{\kfield [u]}\otimes M$.
\end{enumerate}
\end{rem}

\begin{lem}
\label{lem:equiv-cond}
Let $M$ be a graded connected $\kfield$-vector space and $X$ be a graded sub
$\kfield[u]$-module of $\kfield [u] \otimes M$. Then the following conditions
are equivalent:
\begin{enumerate}
\item
the diagram of
graded  $\kfield [u]$-modules
\[
\xymatrix{
X \ar[r]
\ar[d]
&
\kfield [u] \otimes M
\ar@{^(->}[d]
\\
X [u^{-1}] 
\ar[r]
&
\kfield [u^{\pm 1}] \otimes M
}
\]
is cartesian;
\item
$X$ admits a graded space of $\kfield[u]$-generators $W \subset \kfield [u]
\otimes M$
such that the composition $ W \hookrightarrow \kfield [u] \otimes M
\stackrel{\epsilon \otimes M}{\rightarrow}
M$ induced by the augmentation $\epsilon$ of $\kfield [u]$ is a monomorphism.
\end{enumerate}
\end{lem}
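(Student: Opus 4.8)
The plan is to route both implications through the single intermediate condition
\[
(\star)\qquad\text{for every }z\in\kfield[u]\otimes M,\ \ uz\in X\ \Longrightarrow\ z\in X,
\]
and to observe first that the square in (1) is cartesian if and only if $(\star)$ holds. Since $\kfield[u]\to\kfield[u^{\pm1}]$ is flat, both vertical arrows in the square are monomorphisms, so the square is cartesian exactly when $X=(\kfield[u]\otimes M)\cap X[u^{-1}]$ inside $\kfield[u^{\pm1}]\otimes M$; unravelling the localization, the right-hand side is the set of $z\in\kfield[u]\otimes M$ with $u^{N}z\in X$ for some $N\ge 0$, and since $u$ acts injectively on $\kfield[u]\otimes M$ an iteration reduces this condition to the case $N=1$, which is precisely $(\star)$. (Alternatively one could deduce ``cartesian $\Rightarrow$ $(\kfield[u]\otimes M)/X$ is $\kfield[u]$-free'' from Lemma \ref{lem:tfree-quotient} and Lemma \ref{lem:u-torsion-free}, but it is cleanest to work with $(\star)$ directly.)

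For $(1)\Rightarrow(2)$ I would then argue as follows. The kernel of $\epsilon\otimes M$ is $\overline{\kfield[u]}\otimes M=u(\kfield[u]\otimes M)$, and $(\star)$ gives $X\cap u(\kfield[u]\otimes M)=uX$; hence the map $X/uX\to M$ induced by $X\hookrightarrow\kfield[u]\otimes M\xrightarrow{\epsilon\otimes M}M$ is injective. Choose a graded subspace $W\subset X$ projecting isomorphically onto $X/uX$ (a graded complement to $uX$ in $X$). Since $X$ is a submodule of the connected module $\kfield[u]\otimes M$ it is bounded below, so graded Nakayama shows that $W$ generates $X$ over $\kfield[u]$; and the composite $W\hookrightarrow\kfield[u]\otimes M\xrightarrow{\epsilon\otimes M}M$ factors as the isomorphism $W\xrightarrow{\cong}X/uX$ followed by the monomorphism $X/uX\hookrightarrow M$, hence is a monomorphism. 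This is exactly (2).

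For $(2)\Rightarrow(1)$ I would verify $(\star)$ directly. Let $\{w_j\}$ be a basis of the generating space $W$, so every element of $X$ is a finite sum $\sum_j p_j(u)w_j$ with $p_j\in\kfield[u]$. If $uz\in X$ with $z\in\kfield[u]\otimes M$, write $uz=\sum_j p_j(u)w_j$ and apply $\epsilon\otimes M$: the left-hand side vanishes since $uz\in u(\kfield[u]\otimes M)$, so $\sum_j p_j(0)\,(\epsilon\otimes M)(w_j)=0$ in $M$. As $W\to M$ is a monomorphism, the elements $(\epsilon\otimes M)(w_j)$ are linearly independent, whence each $p_j(0)=0$, i.e.\ $p_j=uq_j$ for some $q_j\in\kfield[u]$; then $uz=u\sum_j q_j w_j$ and, $u$ acting injectively on $\kfield[u]\otimes M$, $z=\sum_j q_jw_j\in X$.

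The argument is essentially formal, and I do not expect a serious obstacle; the two points that need a little care are the identification of ``cartesian'' with $(\star)$ — which rests on the injectivity of the localized maps, i.e.\ flatness of $\kfield[u]\to\kfield[u^{\pm1}]$ — and the invocation of graded Nakayama in $(1)\Rightarrow(2)$, which uses that $X$ is bounded below. Both are guaranteed by the standing hypothesis that $M$ is connected.
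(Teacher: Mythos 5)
Your proof is correct, and it reorganises the paper's argument in a genuinely cleaner way even though the underlying arithmetic is the same. The paper proves both implications by direct minimal-counterexample arguments: for $(1)\Rightarrow(2)$ it lifts the image $X_M$ of $X$ in $M$ to a graded subspace $W\subset X$, then shows $W$ generates $X$ by picking an element of $X$ of least degree outside the submodule generated by $W$ and deriving a contradiction from the pullback hypothesis; for $(2)\Rightarrow(1)$ it forms $\tilde X = X[u^{-1}]\cap(\kfield[u]\otimes M)$, picks $y\in\tilde X\setminus X$ with $t>0$ minimal such that $u^t y\in X$, and shows the image of $u^t y$ under $\epsilon\otimes M$ is simultaneously nonzero (from $w_0\neq 0$ and the hypothesis on $W$) and zero (since $u^t y$ lies in $\overline{\kfield[u]}\otimes M$). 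Your version isolates the divisibility condition $(\star)$ as the common pivot, which buys two simplifications: on the one side, $(\star)$ gives the clean identity $X\cap u(\kfield[u]\otimes M)=uX$, so that $X/uX\hookrightarrow M$ and the generation of $X$ by a lift $W$ of $X/uX$ is literally graded Nakayama, rather than the paper's hand-rolled least-degree induction (and one sees that the paper's $X_M$ is nothing but $X/uX$); on the other side, your $(2)\Rightarrow(\star)$ is exactly the paper's argument specialised to $t=1$, the general $t$ having already been absorbed into the equivalence $(1)\Leftrightarrow(\star)$. Both proofs rest on the same two facts — connectivity of $M$ (so everything in sight is bounded below) and $u$-torsion-freeness of $\kfield[u]\otimes M$ — and your write-up flags both correctly.
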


\begin{proof}
Suppose that the first condition holds, and write $X_M$ for the image of $X$ in
$M$ under the composite $X \hookrightarrow \kfield [u] \otimes M \rightarrow M$.
 Let $W$ be the image of a choice of (graded) section of  $X
\twoheadrightarrow X_M$. Then $W$ generates a sub $\field [u]$-module $X'$ of
$X$ isomorphic to $\kfield [u] \otimes X_M$. Suppose that $X' \subsetneq X$ is a
proper submodule and let $x \in X$ be an element of least degree which does not
lie in $X'$. By definition of $X_M$, 
there exists $x' \in X'$ and $\delta \in \overline{\kfield[u]} \otimes M$ such
 that $x = x' + \delta$. Then $\delta \in X \cap
(\overline{\kfield [u] } \otimes M)$ is divisible by $u$; thus, using the
pullback
hypothesis, $\delta = u \delta'$, for some $\delta' \in X$. The minimality of
the degree of $x$ implies that $\delta' \in X'$, which establishes the required
contradiction.

For the converse, let $X$ be as in the statement and consider $\tilde{X}:=
X[u^{-1}] \cap \field [u] \otimes M$ in $\field [u^{\pm 1}] \otimes M$. Thus,
$\tilde{X}$ satisfies the first hypothesis and $X \hookrightarrow \tilde{X}$
has
$u$-torsion cokernel. Suppose that the inclusion is proper and choose $0  \neq y
\in
\tilde{X} \backslash X$; there exists a minimal positive
integer $t$ such that
$u^t y \in X$. Write $u^t y  =
\sum_{n\geq 0} u^{n} \otimes w_n$ for homogeneous elements $w_n$ of $W$. By
minimality of $t$, $w_0 \neq 0$. The image of $u^t y $ under $\kfield [u]
\otimes M \stackrel{\epsilon \otimes M}{\rightarrow }M$ is $w_0$; this is
non-zero, by the hypothesis on $W$.

However, $ y $ belongs to $\tilde{X} \subset \kfield [u] \otimes M$ by
hypothesis 
 and thus  $u^t y \in \overline{\kfield[u]}\otimes M$, since $t >0$. This
establishes a contradiction.
\end{proof}

%%%%%%%%%%%%%%%%%%%%%%%%%%%%%%%%%%%%%%%%%%%%%%%%%%%%%%%%%%%%%%%%%%%%%%%%%%%%%
\subsection{Projecting to $\Phi$}

It is useful to give an equivalent definition of $\rtilde_1$ using the reduced
$T$-functor. 

\begin{nota}
\label{nota:taubar}
 For $M$ an unstable module, let $\overline{\tau}_M$ be the morphism of $\field
[u] \dash \unst$  induced by the morphism $M\rightarrow
\overline{\field [u]} \otimes \redT M $ of unstable modules adjoint to the
identity of $\redT M$.
\end{nota}

\begin{lem}
\label{lem:rtilde-redT}
 Let $M$ be an unstable module. Then there is a natural isomorphism $\rtilde_1 M
 \cong \ker \overline{\tau}_M$.
\end{lem}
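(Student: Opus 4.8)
The statement to prove is Lemma \ref{lem:rtilde-redT}: $\rtilde_1 M \cong \ker \overline{\tau}_M$. Recall that $\rtilde_1 M$ is the equalizer of $\sigma_M, \tau_M : \field[u] \otimes M \rightrightarrows \field[u] \otimes TM$, and that the natural splitting $TM \cong M \oplus \redT M$ is induced by $\field \to 0$. The plan is to decompose everything in sight along this splitting and show that the difference $\tau_M - \sigma_M$ (working additively, since we are over $\field_2$) factors through the summand $\overline{\field[u]} \otimes \redT M$ of $\field[u] \otimes TM$, and that the resulting morphism $\field[u] \otimes M \to \overline{\field[u]} \otimes \redT M$ is precisely $\overline{\tau}_M$.

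\textbf{Key steps.} First I would write the codomain $\field[u] \otimes TM$ as $(\field[u] \otimes M) \oplus (\field[u] \otimes \redT M)$ using the natural splitting $TM \cong M \oplus \redT M$, and analyze the two components of $\sigma_M$ and of $\tau_M$ separately. By Definition \ref{def:sigma-tau}, $\sigma_M$ is $\field[u] \otimes -$ applied to the split inclusion $M \cong T_0 M \to TM$, so its component into $\field[u] \otimes M$ is the identity and its component into $\field[u] \otimes \redT M$ is zero. For $\tau_M$, which is induced by the adjunction unit $M \to \field[u] \otimes TM$ (equivalently $M \to H^*(\field) \otimes TM$), one checks that its $M$-component agrees with that of $\sigma_M$ — both are the identity on $\field[u] \otimes M$ — because the unit composed with the projection $TM \twoheadrightarrow M$ is the ``identity'' part of the adjunction, i.e. the canonical inclusion $M \hookrightarrow H^*(\field) \otimes M$ landing in degree-zero of $H^*(\field)$ is split by $\epsilon$ back to the identity. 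Hence $\tau_M - \sigma_M$ (equivalently, since we are in characteristic $2$, the difference that cuts out the equalizer) has vanishing $\field[u] \otimes M$-component, so it factors as a morphism $\nabla_M : \field[u] \otimes M \to \field[u] \otimes \redT M$.

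\textbf{Completing the argument.} Next I would identify $\nabla_M$ as $\overline{\tau}_M$: the $\redT M$-component of $\tau_M$ is, by naturality of the adjunction and the construction of $\redT$, the morphism of $\field[u]$-modules induced by the unstable-module map $M \to \overline{\field[u]} \otimes \redT M$ adjoint to the identity of $\redT M$ — this is exactly Notation \ref{nota:taubar}, once one observes that the adjunction unit $M \to H^*(\field) \otimes M$ lands, in its $\redT M$-component, in $\overline{\field[u]} \otimes \redT M$ (positive-degree part) rather than all of $\field[u] \otimes \redT M$, since the degree-zero contribution is absorbed into the $M$-summand. Therefore $\ker\{\sigma_M, \tau_M\} = \ker(\tau_M - \sigma_M) = \ker \nabla_M = \ker \overline{\tau}_M$, and all identifications are natural in $M$, giving the claimed natural isomorphism.

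\textbf{Main obstacle.} The bookkeeping with the two splittings — that of $TM$ as $M \oplus \redT M$ and that of $\field[u] = H^*(\field)$ as $\field \oplus \overline{\field[u]}$ — is the delicate point: one must verify carefully that the degree-zero part of the $\redT M$-component of the adjunction unit really is zero (so that $\tau_M$'s reduced component lands in $\overline{\field[u]} \otimes \redT M$, making $\overline{\tau}_M$ well-defined as stated) and simultaneously that the $M$-components of $\sigma_M$ and $\tau_M$ genuinely coincide rather than merely agreeing modulo something. I expect this to be a compatibility-of-adjunctions check that is conceptually clean but requires attention to which summand each piece of the unit $M \to H^*(\field) \otimes TM$ lands in; once that is pinned down, the rest is formal.
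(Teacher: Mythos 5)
Your argument is correct and is precisely the kind of splitting computation the paper elides with ``Straightforward'': decompose $\field[u]\otimes TM$ as $(\field[u]\otimes M)\oplus(\field[u]\otimes\redT M)$, observe that both $\sigma_M$ and $\tau_M$ have identity $M$-component (for $\tau_M$ this uses naturality of the unit in $V$ along $\field\to 0$, which shows $\eta_M$ composed with $1\otimes\pi$ is the canonical inclusion $M\hookrightarrow\field[u]\otimes M$), so the equalizer is the kernel of the $\redT M$-component of $\tau_M$, which under the splitting of the $T$-adjunction into the trivial piece and the $\redT$-adjunction is exactly the $\field[u]$-linearization of the unit $M\to\overline{\field[u]}\otimes\redT M$, i.e.\ $\overline{\tau}_M$. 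One small slip: you write ``the adjunction unit $M\to H^*(\field)\otimes M$'' where you mean $M\to H^*(\field)\otimes TM$; the rest, including the point that the degree-zero part of the $\redT M$-component vanishes (because $(\epsilon\otimes 1)\circ\eta_M$ is the inclusion $M\hookrightarrow TM$, which lands in the $M$-summand), is a clean and complete filling-in of the paper's omitted verification.
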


\begin{proof}
 Straightforward.
\end{proof}

There are canonical morphisms $\rtilde_1 M \hookrightarrow \field [u] \otimes M
\stackrel{\epsilon \otimes M}{\rightarrow }M$ in $\unst$. Recall that 
 $\indec : \field [u] \dash \unst \rightarrow \unst$ is the indecomposables
functor. The projective objects of $\unst$ are nilclosed, hence,  for $P$ a
projective, $Sq_0$ induces a monomorphism $\Phi P \hookrightarrow P$.  

\begin{lem}
\label{lem:rho-P}
 Let $P$ be a projective unstable module; then there is a natural (for $P$ in 
$\unstproj$) commutative diagram 
\[
 \xymatrix{
\rtilde_1 P 
\ar[r]
\ar[d]
_{\tilde{\rho}_P} 
&
\field [u] \otimes P 
\ar[d]^{\epsilon \otimes P}
\\
\Phi P
\ar@{^(->}[r]_{Sq_0}
&
P.
}
\]
Moreover, $\tilde{\rho}_P$ factorizes canonically as 
\[
\xymatrix{
\rtilde_1 P \ar@{->>}[r]
\ar[rd]_{\tilde{\rho}_P}
& 
\indec (\rtilde_1 P) 
\ar[d]
\\
&
\Phi P.
}
\]
\end{lem}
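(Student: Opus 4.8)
The statement divides into the construction of $\tilde{\rho}_P$ via the first square and the factorization of $\tilde{\rho}_P$ through $\indec(\rtilde_1 P)$; the second part is formal once the first is in hand, so the plan is to concentrate on the square. Since $P$ is nilclosed, $Sq_0 : \Phi P \hookrightarrow P$ is a monomorphism, so it suffices to show that the composite $e_P : \rtilde_1 P \hookrightarrow \field[u]\otimes P \xrightarrow{\epsilon\otimes P} P$ has image contained in $\mathrm{im}(Sq_0)$: the factorization $\tilde{\rho}_P$ is then unique, and hence — on composing with $Sq_0$ and using the naturality of $e_{(-)}$ and of $Sq_0$ — automatically natural for $P$ in $\unstproj$.

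The containment $\mathrm{im}(e_P)\subseteq\mathrm{im}(Sq_0)$ is preserved by retracts (functoriality of $\rtilde_1$, $\Phi$, $Sq_0$) and by coproducts (Proposition \ref{prop:fundamental-rtilde}, using that $\rtilde_1$ commutes with coproducts), and every projective unstable module is a retract of a coproduct of free modules $F(m)$; it therefore suffices to treat $P = F(n)$. Here $\rtilde_1 F(n) = \ker\overline{\tau}_{F(n)}$ by Lemma \ref{lem:rtilde-redT}, and $e_{F(n)}$ sends $z = \sum_k u^k\otimes m_k$ to its constant term $m_0$. Writing $\overline{\tau}_{F(n)}$ as the $\field[u]$-linear extension of $\beta : F(n)\to\overline{\field[u]}\otimes\redT F(n)$ and letting $\beta_1 : F(n)\to\redT F(n)$ be the degree $-1$ component of $\beta$, comparing coefficients of $u$ in the identity $\overline{\tau}_{F(n)}(z)=0$ shows $\beta_1(m_0)=0$, so everything comes down to identifying $\ker\beta_1$.

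Using the splitting $TF(n)\cong\bigoplus_{i=0}^{n}F(n-i)$ (by Yoneda, as $\hom(F(n),\field[u]\otimes X)\cong\bigoplus_{i=0}^{n}X_{n-i}$), with $F(n) = T_0 F(n)$ the summand $i=0$ and hence $\redT F(n)\cong\bigoplus_{i=1}^{n}F(n-i)$, the adjunction unit carries $\iota_n$ to $\sum_{i=0}^{n}u^{i}\otimes\gamma_i$, where $\gamma_i$ is the fundamental class of the $i$-th summand. Projecting to $\redT F(n)$ and reading off the coefficient of $u$ identifies $\beta_1$ with the degree-lowering map $F(n)\to F(n-1)$, $\iota_n\mapsto\iota_{n-1}$, whose kernel is the span of the admissible monomials $Sq^{I}\iota_n$ of excess exactly $n$, which is $\mathrm{im}(Sq_0 : \Phi F(n)\to F(n))$ by the classical exact sequence $0\to\Phi F(n)\xrightarrow{Sq_0}F(n)\to\Sigma F(n-1)\to 0$. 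Thus $m_0\in\mathrm{im}(Sq_0)$, which produces the square. Finally $Sq_0\circ\tilde{\rho}_P = e_P$ annihilates $\overline{\field[u]}\cdot\rtilde_1 P$ (since $\epsilon\otimes P$ kills $\overline{\field[u]}\otimes P$); as $Sq_0$ is monic, $\tilde{\rho}_P$ annihilates $\overline{\field[u]}\cdot\rtilde_1 P$ and so factors uniquely through $\indec(\rtilde_1 P) = \rtilde_1 P/\overline{\field[u]}\cdot\rtilde_1 P$, giving the second diagram.

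I expect the hard part to be the identification in the third paragraph — pinning down $\beta_1$ as the canonical surjection onto $F(n-1)$ and recognizing its kernel as $\mathrm{im}(Sq_0)$ — which is the only genuinely computational input, relying on the structure of $TF(n)$ and the standard short exact sequence for $F(n)$. (Alternatively, since $P$ is nilclosed one could invoke Theorem \ref{thm:Rone-rtilde} to replace $\rtilde_1 P$ by $R_1 P$ and read the square and the factorization directly off the natural short exact sequence $0\to uR_1 P\to R_1 P\xrightarrow{\rho_1}\Phi P\to 0$; but the route above keeps this section independent of the properties of $R_1$.)
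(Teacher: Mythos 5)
Your proof is correct, but it takes a genuinely different route from the paper's. The paper's proof is conceptual: it introduces the natural transformation $\alpha_P : \Omega P \to \redT P$ of Definition \ref{def:alpha}, invokes a formal adjunction argument to show $\alpha_P$ is a monomorphism for $P$ projective, and then observes that the composite $\field[u]\otimes P \xrightarrow{\overline{\tau}_P} \overline{\field[u]}\otimes\redT P \to \Sigma\redT P$ restricted to $\rtilde_1 P$ vanishes, forcing $\rtilde_1 P \to P$ to land in $\ker(P \to \Sigma\Omega P) = \Phi P$. You instead reduce (via retracts and coproducts, which is valid given Proposition \ref{prop:fundamental-rtilde} (5) and functoriality) to $P = F(n)$, and compute directly: using $TF(n)\cong\bigoplus_{i=0}^n F(n-i)$ and the explicit form $\iota_n\mapsto\sum_i u^i\otimes\gamma_i$ of the adjunction unit, you identify the coefficient-of-$u$ component $\beta_1$ with the standard surjection $F(n)\to\Sigma F(n-1)$ (up to suspension) and observe that its kernel is exactly $\mathrm{im}(Sq_0)$. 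This is essentially unwinding by hand, on generators, what the paper encodes via $\alpha_P$; your route avoids appealing to the injectivity of $\alpha_P$ and so is slightly more self-contained, at the cost of being more computational and less immediately extendable to the nilclosed case (where the paper's Remark shows the same conceptual argument applies verbatim via Proposition \ref{prop:division}). One small point of hygiene: $\beta_1$ itself is only a degree $-1$ graded-linear map and not $\cala$-linear; it is its suspension $F(n)\to\Sigma\redT F(n)$, obtained from the quotient $\overline{\field[u]}\twoheadrightarrow\Sigma\field$, that is a morphism of unstable modules and coincides with the standard surjection $F(n)\to\Sigma F(n-1)\subset\Sigma\redT F(n)$ — this is what justifies reading off the kernel as $\Phi F(n)$. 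Your argument for the second diagram (factorization through $\indec$) is identical to the paper's: $\epsilon\otimes P$ kills $\overline{\field[u]}\otimes P$, so $Sq_0\circ\tilde{\rho}_P$ annihilates $\overline{\field[u]}\cdot\rtilde_1 P$, and $Sq_0$ being monic finishes it.
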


\begin{proof}
Let $\alpha_M : \Omega M \rightarrow \redT M$ be the natural transformation of
Definition \ref{def:alpha}. For $M = P$ a projective unstable module, a formal
adjunction argument shows that $\Omega P \hookrightarrow \redT P$ is a
monomorphism.

The morphism $\overline{\field [u]}\twoheadrightarrow \Sigma \field$ induces a
commutative diagram 
\[
\xymatrix{
 \field[u] \otimes P
\ar[r]^{\overline{\tau}_P}
\ar[d]
&
\overline{\field[u]} \otimes \redT P
\ar[d]
\\
P
\ar[r]
&
\Sigma \redT P.
}
\]
The morphism $P \rightarrow \Sigma \redT P $ factorizes canonically as $P
\rightarrow \Sigma \Omega P \hookrightarrow \Sigma \redT P$, where the second
morphism is a monomorphism by the previous discussion and the first  is
the adjunction unit which features in the short exact sequence of unstable
modules
\[ 
 0 \rightarrow \Phi P \rightarrow P \rightarrow \Sigma \Omega P\rightarrow 0.
\]
Hence the commutative diagram and the definition of $\rtilde_1 P $ as the
kernel of $\overline{\tau}_P$ imply  that $\rtilde_1 P \rightarrow P$
factorizes canonically across $\Phi P \hookrightarrow P $, as required.

The final point is formal, from the adjunction defining the indecomposables
functor, $\indec$.
\end{proof}

\begin{rem}
 The result holds for any nilclosed unstable module, by using
 the fact that $\Omega M \rightarrow \redT M $ is a monomorphism if
$M$ is nilclosed (see Proposition \ref{prop:division}).
\end{rem}

The following Lemma provides the calculational input which is required.

\begin{lem}
\label{lem:rhoP-surj}
 Let $P$ be a projective unstable module. Then the natural transformation 
 $
 \tilde{\rho}_P : \rtilde_1 P \twoheadrightarrow \Phi P
$
is surjective. 
\end{lem}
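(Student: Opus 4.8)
The plan is to reduce the claim to the case of free unstable modules $F(n)$, since every projective $P$ is a direct sum of copies of the $F(n)$ and both $\rtilde_1$ and $\Phi$ commute with coproducts (by Proposition \ref{prop:fundamental-rtilde} and the fact that $\Phi$ is a left adjoint on graded vector spaces); surjectivity of a natural transformation is detected summand-wise. So first I would reduce to showing $\tilde\rho_{F(n)} : \rtilde_1 F(n) \twoheadrightarrow \Phi F(n)$ is surjective for all $n \geq 0$.

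Next I would pin down the target and the source in degree $2n'$ (recall $(\Phi F(n))_{2k} = F(n)_k$, concentrated in even degrees). By Lemma \ref{lem:rho-P}, $\tilde\rho_{F(n)}$ is the composite $\rtilde_1 F(n) \hookrightarrow \field[u] \otimes F(n) \xrightarrow{\epsilon \otimes F(n)} F(n)$, landing in $\Phi F(n) \subset F(n)$. By Proposition \ref{prop:fundamental-rtilde}(1), $\rtilde_1 F(n)$ has free underlying $\field[u]$-module, and by Lemma \ref{lem:equiv-cond} together with the cartesian square of Proposition \ref{prop:fundamental-rtilde}(6), $\rtilde_1 F(n)$ admits a homogeneous basis $W \subset \field[u] \otimes F(n)$ whose image under $\epsilon \otimes F(n)$ is a linearly independent subset of $F(n)$ — and the $\field[u]$-span of $W$ mapping onto $\rtilde_1 F(n)$ means the image of $\tilde\rho_{F(n)}$ is precisely the span of $\epsilon(W)$ inside $\Phi F(n)$. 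So the task becomes: show this image exhausts all of $\Phi F(n)$, equivalently that for every homogeneous $x \in F(n)_k$, the element $\Phi x \in (\Phi F(n))_{2k}$ lies in the image.

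The key calculational input I would supply is an explicit element of $\rtilde_1 F(n)$ mapping to $\Phi x$. The natural candidate is (the analogue of) the Singer operation $\st_1$: for $x \in F(n)$, the element $\sum_i u^{|x|-i} \otimes Sq^i x \in \field[u] \otimes F(n)$ has leading term $1 \otimes Sq^{|x|} x = 1 \otimes \Phi(\text{something})$ — more precisely, by the unstable condition, $Sq^{|x|} x = x^2$ sits in even degree, so $\epsilon$ applied to $\st_1(x)$ lands in $\Phi F(n)$; one then checks directly from Lemma \ref{lem:fix-tau} / the description of $\overline\tau$ in Notation \ref{nota:taubar} and Lemma \ref{lem:rtilde-redT} that $\st_1(x) \in \ker\overline\tau_{F(n)} = \rtilde_1 F(n)$, using that $\redT$ vanishes appropriately on the relevant pieces, or equivalently by invoking the $T$-functor identification of $\rtilde_1 H^*(V)$ in Lemma \ref{lem:rtilde-injectives} and the embedding $F(n) \hookrightarrow H^*(V)^{\oplus?}$. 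As $x$ ranges over a basis of $F(n)$, the images $\Phi x = Sq_0^{-1}(\epsilon \st_1(x))$ span $\Phi F(n)$, giving surjectivity.

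The main obstacle is the middle step: verifying cleanly that the Singer-type element $\st_1(x)$ actually lies in $\ker \overline\tau_{F(n)}$ without simply quoting the identification $\rtilde_1 = R_1$ on nilclosed modules (Theorem \ref{thm:Rone-rtilde}), which would make the argument circular if $R_1$'s defining property is reused carelessly. I expect the right move is to argue on injectives: $F(n)$ embeds in a (nilclosed) injective $H^*(V)^{\oplus k}$, use $\rtilde_1 F(n) = \rtilde_1 H^*(V)^{\oplus k} \cap (\field[u] \otimes F(n))$ from Proposition \ref{prop:fundamental-rtilde}(3), identify the relevant elements of $\rtilde_1 H^*(V)$ via Lemma \ref{lem:rtilde-injectives} as genuine invariants in $H^*(V \oplus \field)^{G_V}$, and observe that the Frobenius/squaring classes realizing $\Phi H^*(V)$ are $G_V$-invariant and restrict correctly to $F(n)$. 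Everything else is bookkeeping with the free $\field[u]$-module structure and the coproduct reduction.
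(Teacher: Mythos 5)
Your overall plan — reduce to $F(n)$, then exhibit explicit elements of $\rtilde_1 F(n)$ hitting generators of $\Phi F(n)$ — is a legitimate alternative route, and the preliminary reduction via coproducts and the freeness/cartesian-square bookkeeping is fine. But the crucial step is never actually carried out, and this is a genuine gap. You need to show that the Singer element $\st_1(x) = \sum u^{|x|-i}\otimes Sq^i x$ lies in $\rtilde_1 F(n) = \ker \overline{\tau}_{F(n)}$, and you offer two sketches, neither of which closes the argument. The ``direct'' route (``$\redT$ vanishes appropriately on the relevant pieces'') misdescribes what must be checked: $\redT$ does not vanish; one must compute that $\overline{\tau}_{F(n)}(\st_1(x)) = 0$, equivalently that $\tau_{F(n)}$ and $\sigma_{F(n)}$ agree on $\st_1(x)$, and this is a nontrivial $T$-functor calculation that you have not performed. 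The injective route is more promising, but the $G_V$-invariance of $\st_1(x)$ in $H^*(V\oplus\field)$ is exactly the content of the Lannes--Zarati identification $R_1 H^*(V)\cong H^*(V\oplus\field)^{G_V}$ (Proposition \ref{prop:R1-injectives}); you either have to reprove it or cite it, and citing it means relying on a known property of $R_1$ — precisely what Section \ref{sect:reduced} announces it is trying to avoid (``without appealing to Theorem \ref{thm:rprime-R} and known properties of $R_1$''). Calling this ``bookkeeping'' understates what remains.

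The paper's own proof sidesteps the Singer element entirely and is both shorter and intrinsic to $\rtilde_1$. It observes $F(n)\cong \{F(1)^{\otimes n}\}^{\symm_n}$, verifies surjectivity for $F(1)$ by a one-line degree-2 computation (producing a linear section $\phi:\Phi F(1)\dashrightarrow \rtilde_1 F(1)$), and then propagates this to $F(n)$ using the multiplicative structure $\rtilde_1 M\otimes_{\field[u]}\rtilde_1 N\cong\rtilde_1(M\otimes N)$ from Proposition \ref{prop:product-rtilde} together with left exactness (to pass to $\symm_n$-invariants). No appeal to $R_1$, $\st_1$, or the $H^*(V)$ calculation is made. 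If you want to salvage your approach, the cleanest fix is to actually carry out the $G_V$-invariance check for $\st_1$ on $H^*(V)$ from scratch (it is a pleasant but nontrivial invariant-theory exercise starting from $\st_1(t)=t(t+u)$ in degree one and using multiplicativity), but you should recognize this amounts to reproving a chunk of Lannes--Zarati rather than ``observing'' it.
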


\begin{proof}
Recall that the unstable modules $F(n)$ ($n \geq 0)$ form a set of
projective generators of $\unst$ and that $F(n) \cong \{ F (1) ^{\otimes n}  \}
^{\symm_n}$. It is sufficient to prove that the morphisms $\tilde{\rho}_{F(n)}$
are surjective, for each $n$. The case $n=0$ is immediate. 

 For $P = F(1)$, the surjectivity is a straightforward calculation: $\Phi F(1)$
is a cyclic unstable module generated by the class of degree two and this class
is in the image of $\tilde{\rho}$. Hence there is a linear morphism $\xymatrix{
\Phi
F(1) \ar@{-->}[r]^\phi &\rtilde_1 F(1)}$ which is a section of $\tilde{\rho}$.

Let $n$ be a natural number; then there is a $\symm_n$-equivariant diagram:
\[
 \xymatrix{
\{\rtilde_1 F(1)
\}^{\otimes n}
\ar[r]
&
\rtilde_1 (F(1) 
^{\otimes n})
\ar[d]
&
\rtilde_1 (F(n))
\ar@{_(->}[l]
\ar[d]^{\tilde{\rho}_{F(n)}}
\\
(\Phi F(1) ) ^{\otimes n}
\ar[r]^\cong
\ar@{-->}[u]^{\phi^{\otimes n}}
&
\Phi( F(1)  ^{\otimes n})
&
\Phi F(n)
\ar@{_(->}[l]
}
\]
in which the top row is induced by the exterior product morphism of Proposition
\ref{prop:product-rtilde}, the symmetric group acts by permuting the tensor
factors in the left hand square of the diagram and trivially on the right hand
column.

The left exactness of $\rtilde_1 $ implies that $\rtilde_1 F(n) \cong
\{\rtilde_1 (F(1) ^{\otimes n})\}^{\symm_n}$; similarly, $\Phi F(n) \cong
\{(\Phi
F(1) ) ^{\otimes n}\}^{\symm_n}$.  Hence the restriction of the
composite morphism to the $\symm_n$-invariants of $(\Phi F(1) ) ^{\otimes n}$ 
 yields a linear section to $\tilde{\rho}_{F(n)}$.
\end{proof}

\begin{rem}
\ 
\begin{enumerate}
\item
In the argument above, it suffices to check that the fundamental class of
$F(n)$ is in the image; this does not provide a significant simplification of
the argument.
\item
This can be used to
recover the description of $R_1$ given by Singer. Namely, the above
construction gives rise to the linear morphism $\xymatrix{ 
\Phi M \ar@{-->}[r]^{\st_1}& \field [u]\otimes M}$. 
\end{enumerate}
\end{rem}

\begin{lem}
\label{lem:proj-rtilde-indec}
 Let $P$ be a projective unstable module. Then, as a graded $\field [u]$-module,
$\rtilde_1 P$ is isomorphic to $\field [u] \otimes \Phi P$ and the morphism
$\tilde{\rho}_P$ induces a natural isomorphism
 $
 \indec (\rtilde_1 P)
\cong
\Phi P.
 $
\end{lem}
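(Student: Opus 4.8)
The statement has two parts: first that $\rtilde_1 P$ is abstractly isomorphic as a graded $\field[u]$-module to $\field[u] \otimes \Phi P$, and second that $\indec(\rtilde_1 P) \cong \Phi P$ via $\tilde{\rho}_P$. The plan is to combine the freeness of $\rtilde_1 P$ (Proposition \ref{prop:fundamental-rtilde}(1)) with the surjectivity of $\tilde{\rho}_P$ (Lemma \ref{lem:rhoP-surj}) and a count of Poincaré series. First I would recall that $\rtilde_1 P$ is a free $\field[u]$-module, so it has the form $\field[u] \otimes W$ for a graded space of generators $W$; since $P$ is projective, $Sq_0 : \Phi P \hookrightarrow P$ is a monomorphism, and Lemma \ref{lem:rho-P} gives the factorization $\rtilde_1 P \twoheadrightarrow \indec(\rtilde_1 P) \to \Phi P$. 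By Lemma \ref{lem:rhoP-surj}, the composite $\rtilde_1 P \to \Phi P$ is surjective, hence so is $\indec(\rtilde_1 P) \to \Phi P$.

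The main step is to show this surjection $\indec(\rtilde_1 P) \twoheadrightarrow \Phi P$ is an isomorphism. For this I would compare Poincaré series. On one side, since $\rtilde_1 P$ is a free $\field[u]$-module, $\indec(\rtilde_1 P) \cong \field \otimes_{\field[u]} \rtilde_1 P$ has Poincaré series equal to $(1 - t)$ times that of $\rtilde_1 P$. On the other side, $\rtilde_1 P = \ker\{\field[u]\otimes P \rightrightarrows \field[u] \otimes TP\}$, equivalently $\ker \overline{\tau}_P$ by Lemma \ref{lem:rtilde-redT}; for $P$ projective (hence nilclosed) the morphism $\overline{\tau}_P$ is explicitly controlled by the short exact sequence $0 \to \Phi P \to P \to \Sigma \Omega P \to 0$ appearing in the proof of Lemma \ref{lem:rho-P}, together with the monomorphism $\Omega P \hookrightarrow \redT P$. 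This lets me compute the Poincaré series of $\rtilde_1 P$ in terms of those of $\Phi P$ and $P$, and conclude that the Poincaré series of $\indec(\rtilde_1 P)$ agrees with that of $\Phi P$. A surjection of graded vector spaces of finite type with equal Poincaré series is an isomorphism, giving $\indec(\rtilde_1 P) \cong \Phi P$.

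Finally, to get the $\field[u]$-module statement: $\rtilde_1 P$ is free, so $\rtilde_1 P \cong \field[u] \otimes \indec(\rtilde_1 P) \cong \field[u] \otimes \Phi P$ as graded $\field[u]$-modules, using the isomorphism just established. (One can also see this concretely: the linear section $\phi : \Phi P \dashrightarrow \rtilde_1 P$ constructed in the proof of Lemma \ref{lem:rhoP-surj} extends $\field[u]$-linearly to a map $\field[u]\otimes \Phi P \to \rtilde_1 P$, which is surjective because its composite with $\tilde\rho_P$ hits all of $\Phi P$ and $\rtilde_1 P$ is generated over $\field[u]$ by elements mapping into a basis of $\Phi P$; it is then an isomorphism by the Poincaré series count.) The naturality of $\indec(\rtilde_1 P) \cong \Phi P$ in $P \in \unstproj$ is inherited from the naturality of $\tilde{\rho}_P$ asserted in Lemma \ref{lem:rho-P}.

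I expect the main obstacle to be the bookkeeping in the Poincaré series computation — specifically, pinning down the image of $\overline{\tau}_P$ precisely enough (using $\Omega P \hookrightarrow \redT P$ and the structure of $\redT$ on a nilclosed module) to equate series; everything else is formal once freeness and surjectivity are in hand. An alternative route that sidesteps the series count would be to show directly that the kernel of the surjection $\indec(\rtilde_1 P) \to \Phi P$ vanishes by reducing to $P = F(n)$ via $\rtilde_1 F(n) = \{\rtilde_1(F(1)^{\otimes n})\}^{\symm_n}$ and the product isomorphism of Proposition \ref{prop:product-rtilde}, checking the case $P = F(1)$ by hand and propagating multiplicatively — this parallels the proof of Lemma \ref{lem:rhoP-surj} and may in fact be the cleaner argument.
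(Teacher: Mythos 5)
The paper's proof is short and rests on two ingredients that your proposal never invokes: Proposition \ref{prop:fundamental-rtilde}(6), which says the square
\[
\xymatrix{
\rtilde_1 P \ar[r]\ar@{^(->}[d] & \rtilde_1 P[u^{-1}] \ar@{^(->}[d]\\
\field[u]\otimes P \ar@{^(->}[r] & \field[u^{\pm 1}]\otimes P
}
\]
is cartesian, and Lemma \ref{lem:equiv-cond}, which converts that cartesian property into the statement that $\rtilde_1 P$ is $\field[u]$-free \emph{on its image under the augmentation} $\epsilon\otimes P$. Once you know that, Lemmas \ref{lem:rho-P} and \ref{lem:rhoP-surj} identify the image as exactly $\Phi P$, and the lemma falls out.

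Your Poincar\'e series route has a genuine gap at the step you yourself flag as the main obstacle. The freeness of $\rtilde_1 P$ tells you $\rtilde_1 P \cong \field[u]\otimes W$ for some $W=\indec(\rtilde_1 P)$ with a surjection $W\twoheadrightarrow \Phi P$, but to conclude equality of Poincar\'e series you must compute the series of $\rtilde_1 P = \ker\overline{\tau}_P$, and the data you cite do not determine it. The short exact sequence $0\to\Phi P\to P\to\Sigma\Omega P\to 0$ together with $\Omega P\hookrightarrow\redT P$ only controls what happens \emph{after} projecting $\overline{\field[u]}\otimes\redT P$ onto $\Sigma\redT P$; it does not determine the image $C_1 P$ of $\overline{\tau}_P$ inside $\overline{\field[u]}\otimes\redT P$, hence not the kernel either. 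To pin down $C_1 P$ you would need, for instance, to know that the cokernel $C_2 P$ is $u$-torsion-free (so that $\indec C_1 P\hookrightarrow\Sigma\redT P$), but that fact is only proved later in the paper (Theorem \ref{thm:R1-Q2-nilclosed}) and its proof depends on the present lemma, so invoking it would be circular. In effect the missing step is exactly what Lemma \ref{lem:equiv-cond} supplies: the purely module-theoretic observation that the cartesian/localization condition forces $\rtilde_1 P\cap(\overline{\field[u]}\otimes P)=u\cdot\rtilde_1 P$, so that the kernel of $\indec(\rtilde_1 P)\to\Phi P$ vanishes directly, with no series count needed.

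Your alternative route --- reduce to $P=F(n)=\{F(1)^{\otimes n}\}^{\symm_n}$, handle $F(1)$ by hand, and propagate via the product isomorphism of Proposition \ref{prop:product-rtilde} --- is in fact viable and genuinely different from the paper's argument: once $\rtilde_1 F(1)\cong\field[u]\otimes\Phi F(1)$ is established by direct computation, Proposition \ref{prop:product-rtilde} gives $\rtilde_1(F(1)^{\otimes n})\cong\field[u]\otimes(\Phi F(1))^{\otimes n}$, and taking $\symm_n$-invariants (which commutes with $\field[u]\otimes -$) yields $\rtilde_1 F(n)\cong\field[u]\otimes\Phi F(n)$. But as written this is only a sketch; you would need to actually carry out the $F(1)$ computation and verify the compatibility of $\tilde{\rho}$ with the product structure, rather than merely gesture at it. I would recommend either filling in that computation or, more efficiently, making Proposition \ref{prop:fundamental-rtilde}(6) and Lemma \ref{lem:equiv-cond} the backbone of the argument as the paper does.
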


\begin{proof}
The $\field [u]$-module $\rtilde_1 P $ satisfies the first hypothesis of Lemma
\ref{lem:equiv-cond}, by Proposition \ref{prop:fundamental-rtilde} (6). Hence,
the equivalent condition shows that $\rtilde_1 P $
is free on $\Phi P$, by Lemma \ref{lem:rhoP-surj}, which identifies the image
in $P$. The final statement is clear.
\end{proof}

These results apply to define a morphism $\rhoprime_M: \rprime_1 M \rightarrow
\Phi M$, for an arbitrary unstable module $M$.

\begin{prop}
\label{prop:rho-prime}
Let $M$ be an unstable module. There exists a unique surjective morphism of
unstable modules
\[
 \rhoprime _M : \rprime_1 M \twoheadrightarrow \Phi M
\]
such that, for any projective presentation  $P_1 \rightarrow P_0
\rightarrow M \rightarrow 0$ of $M$, the diagram
 \begin{eqnarray}
  \label{eqn:rho-prime}
\xymatrix{
\rtilde_1 P_1 
\ar@{->>}[d]_{\tilde{\rho}_{P_1}}
\ar[r]
&
\rtilde_1 P_0
\ar[r]
\ar@{->>}[d]_{\tilde{\rho}_{P_0}}
&
\rprime_1 M
\ar[r]
\ar@{->>}[d]^{\rhoprime_M}
&
0
\\
\Phi P_1 
\ar[r]
&
\Phi P_0 
\ar[r]
&
\Phi M 
\ar[r]
&
0
}
\end{eqnarray}
is commutative.

The morphism $\rhoprime_M$ satisfies the following properties:
\begin{enumerate}
 \item
$\rhoprime_M$ defines a natural surjective transformation $\rprime_1
\twoheadrightarrow \Phi$ of functors taking values in  $\unst$;
\item
$\rhoprime_M$ induces an isomorphism 
$
\indec ( \rprime_1 M)  
\cong
\Phi M 
$;
\item
the following diagram in $\unst$ is commutative
\begin{eqnarray}
 \label{diag:rprime-Sq0}
 \xymatrix{
\rprime_1 M
\ar[r]
\ar@{->>}[d]
_{\rhoprime_M}
&
\field [u] \otimes M
\ar[d]^{\epsilon \otimes M}
\\
\Phi M
\ar[r]_{Sq_0}
&
M.
}
\end{eqnarray}
\end{enumerate}
\end{prop}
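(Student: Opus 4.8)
The plan is to construct $\rhoprime_M$ by Kan extension, deriving its existence and uniqueness from Lemma~\ref{lem:rho-P} and the fact that $\Phi$ is right exact, and then to obtain properties (1)--(3) by naturality arguments that reduce everything to the case of projectives, where they are already recorded in Lemmas~\ref{lem:rho-P}, \ref{lem:rhoP-surj} and \ref{lem:proj-rtilde-indec}.

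First I would fix a projective presentation $P_1 \rightarrow P_0 \rightarrow M \rightarrow 0$. By Lemma~\ref{lem:rho-P} the transformations $\tilde{\rho}_{P_i}$ are natural in $P_i \in \unstproj$, so the left square of diagram (\ref{eqn:rho-prime}) commutes; since $\Phi$ is right exact, the bottom row $\Phi P_1 \rightarrow \Phi P_0 \rightarrow \Phi M \rightarrow 0$ is exact, and by Definition~\ref{lem:rprime} the top row $\rtilde_1 P_1 \rightarrow \rtilde_1 P_0 \rightarrow \rprime_1 M \rightarrow 0$ is exact. A diagram chase then produces a unique $\rhoprime_M : \rprime_1 M \rightarrow \Phi M$ making (\ref{eqn:rho-prime}) commute. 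Its surjectivity is immediate from the surjectivity of $\tilde{\rho}_{P_0}$ (Lemma~\ref{lem:rhoP-surj}) together with surjectivity of $\rtilde_1 P_0 \twoheadrightarrow \rprime_1 M$. Independence of the chosen presentation, hence well-definedness as a transformation on all of $\unst$, follows because any two projective presentations of $M$, and more generally a morphism $M \rightarrow M'$ of unstable modules, can be covered by a morphism of presentations, and the uniqueness clause forces compatibility; this is the standard comparison argument for left Kan extensions. This simultaneously gives property (1), naturality of $\rhoprime$.

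For property (2), apply the indecomposables functor $\indec$ to the top row of (\ref{eqn:rho-prime}). Since $\indec$ is a left adjoint it is right exact, so $\indec(\rtilde_1 P_1) \rightarrow \indec(\rtilde_1 P_0) \rightarrow \indec(\rprime_1 M) \rightarrow 0$ is exact; by Lemma~\ref{lem:proj-rtilde-indec} this identifies canonically with $\Phi P_1 \rightarrow \Phi P_0 \rightarrow \Phi M \rightarrow 0$ in a way compatible with $\rhoprime_M$, whence $\indec(\rprime_1 M) \cong \Phi M$. For property (3), the square (\ref{diag:rprime-Sq0}) is obtained by covering it with the corresponding square for $P_0$ from Lemma~\ref{lem:rho-P}: precompose with the surjection $\rtilde_1 P_0 \twoheadrightarrow \rprime_1 M$ (using the identification $\rtilde_1 P_0 \cong \rprime_1 P_0$ of Lemma~\ref{lem:rprime-rtilde}) and with $\rtilde_1 P_0 \rightarrow \field[u] \otimes P_0 \rightarrow \field[u] \otimes M$; both composites around (\ref{diag:rprime-Sq0}) agree after this precomposition by the projective case, and surjectivity of $\rtilde_1 P_0 \twoheadrightarrow \rprime_1 M$ lets one cancel it.

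The main obstacle I expect is the well-definedness/naturality bookkeeping: one must check that $\rhoprime_M$ does not depend on the presentation and is functorial in $M$, which requires lifting morphisms of unstable modules to morphisms of projective presentations and invoking the uniqueness clause carefully. This is routine homological algebra — the universal property of the left Kan extension $\rprime_1$ does most of the work — but it is the step where care is needed, since $\rprime_1 \rightarrow \rtilde_1$ is not known to be monic (as the remark after Lemma~\ref{lem:rprime-rtilde} stresses), so one cannot simply read $\rhoprime_M$ off inside $\rtilde_1 M$.
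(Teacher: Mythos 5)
Your proposal is correct and follows essentially the same route as the paper: define $\rhoprime_M$ via the diagram on a projective presentation, obtain naturality from the uniqueness of the induced map, apply the right-exact functor $\indec$ together with Lemma~\ref{lem:proj-rtilde-indec} for part (2), and deduce part (3) from Lemma~\ref{lem:rho-P} by precomposing with the surjection $\rtilde_1 P_0 \twoheadrightarrow \rprime_1 M$. The paper is terser (it asserts independence of the presentation and leaves the diagram for (3) to the reader), but you have merely spelled out the same steps.
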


\begin{proof}
Choose a projective presentation $P_1 \rightarrow P_0 \rightarrow M \rightarrow
0$ of $M$, then the diagram (\ref{eqn:rho-prime}) defines the morphism
$\rhoprime_M$, which is independent of the choice of presentation; surjectivity
is immediate. The unicity of the construction implies that the morphisms
$\rhoprime_M$ form a natural transformation. 

The functor $\indec$ is right exact, hence induces an exact sequence 
\[
\indec (\rtilde_1 P_1) 
\rightarrow 
\indec (\rtilde_1 P_0) 
\rightarrow 
\indec (\rprime _1 M)
\rightarrow 
0.
\]
Now, the morphism  $\indec (\rtilde_1 P_1) \rightarrow \indec (\rtilde_1 P_0)$
identifies with $\Phi P_1 \rightarrow \Phi P_0$, by Lemma
\ref{lem:proj-rtilde-indec}. This implies the second statement.

The final statement follows from the construction of the morphism
$\tilde{\rho}_{P}$, for $P$ a projective unstable module. 
\end{proof}

Recall that there is a natural transformation $\rprime_1 \rightarrow \rtilde_1
$ hence, for any unstable module $M$, a composite morphism $\rprime_1 M
\rightarrow  \rtilde_1 M \hookrightarrow \field [u] \otimes M$.

\begin{prop}
\label{prop:red-free-R1}
Let $M$ be a reduced unstable module. Then 
\begin{enumerate}
\item
the  morphism $\rprime_1 M \rightarrow \field
[u]
\otimes M $ is a monomorphism;
\item
$\rprime_1 M$ is a free $\field [u]$-module on a graded vector subspace
isomorphic to $\Phi M$;
\item
the quotient $\field [u]$-module $(\field [u] \otimes M) / \rprime_1 M $
is
$\field [u]$-free.
\end{enumerate}
\end{prop}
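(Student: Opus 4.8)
The plan is to exploit a projective presentation of $M$ together with the structure theory for graded $\field[u]$-modules developed earlier. Choose a projective presentation $P_1 \xrightarrow{f} P_0 \rightarrow M \rightarrow 0$ in $\unst$. Since $M$ is reduced, we may assume the presentation is chosen so that $P_0 \rightarrow M$ is the nilclosure-minimal cover (or more simply we pass to the reduced quotients), but in fact the cleanest route is to observe that all three claims follow from the single statement that the composite $\rprime_1 M \rightarrow \field[u] \otimes M \xrightarrow{\epsilon \otimes M} M$ has image identified with the copy of $\Phi M \subset M$ supplied by $Sq_0$, together with Lemma~\ref{lem:equiv-cond}. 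Concretely, I would first show that $\rhoprime_M : \rprime_1 M \twoheadrightarrow \Phi M$ of Proposition~\ref{prop:rho-prime} is an isomorphism when $M$ is reduced; this is the heart of the matter.

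The key point is that when $M$ is reduced, the map $Sq_0 : \Phi M \rightarrow M$ is a monomorphism, so the three squares assembling diagram~(\ref{diag:rprime-Sq0}) force $\ker \rhoprime_M$ to inject into $\ker(\epsilon \otimes M)|_{\rprime_1 M} = \rprime_1 M \cap (\overline{\field[u]} \otimes M)$ — provided we already know $\rprime_1 M \rightarrow \field[u] \otimes M$ is mono. This is circular as stated, so instead I would run the argument on the presentation level. Apply $\rtilde_1$ to get $\rtilde_1 P_1 \rightarrow \rtilde_1 P_0 \rightarrow \rprime_1 M \rightarrow 0$ (right exact by definition of $\rprime_1$ as the Kan extension/cokernel). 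By Lemma~\ref{lem:proj-rtilde-indec}, $\rtilde_1 P_i \cong \field[u] \otimes \Phi P_i$ as $\field[u]$-modules. The map $\rtilde_1 P_1 \rightarrow \rtilde_1 P_0$ sits inside $\field[u] \otimes P_1 \rightarrow \field[u] \otimes P_0$, which is $\field[u] \otimes f$. Now I want to identify $\rprime_1 M$, the cokernel, with the $\field[u]$-submodule of $\field[u] \otimes M$ generated by $Sq_0(\Phi M)$. Since $\Phi$ is exact, $\Phi P_1 \rightarrow \Phi P_0 \rightarrow \Phi M \rightarrow 0$ is exact, and applying $\field[u] \otimes -$ (exact) gives that $\mathrm{coker}(\field[u] \otimes \Phi f) \cong \field[u] \otimes \Phi M$. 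The subtlety is that $\rtilde_1 f : \rtilde_1 P_1 \rightarrow \rtilde_1 P_0$ need not be identified with $\field[u] \otimes \Phi f$ under the isomorphisms of Lemma~\ref{lem:proj-rtilde-indec} — they agree only after composing with $\tilde\rho$. So one must show the cokernel of $\rtilde_1 f$ still maps isomorphically to its image of $\Phi M$ type, which is where reducedness of $M$ enters: it guarantees no collapsing.

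The cleanest execution: let $W \subset \rprime_1 M$ be the image of a graded section of $\rhoprime_M : \rprime_1 M \twoheadrightarrow \Phi M$; then $W$ generates $\rprime_1 M$ over $\field[u]$ (since $\indec \rprime_1 M \cong \Phi M$ by Proposition~\ref{prop:rho-prime}(2), Nakayama for graded connected $\field[u]$-modules applies), and the composite $W \hookrightarrow \rprime_1 M \rightarrow \field[u] \otimes M \xrightarrow{\epsilon \otimes M} M$ equals $Sq_0 \circ (\text{section}) = Sq_0$ up to the chosen splitting, which is a monomorphism since $M$ is reduced. By Lemma~\ref{lem:equiv-cond} (applied with $X$ the image of $\rprime_1 M$ in $\field[u] \otimes M$ and this generating space $W$), the square of Proposition~\ref{prop:fundamental-rtilde}(6) type is cartesian for this image, hence the image is $\field[u]$-free on $W \cong \Phi M$. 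To upgrade "image is free on $\Phi M$" to "$\rprime_1 M \rightarrow \field[u] \otimes M$ is mono", note that the surjection $\rprime_1 M \twoheadrightarrow \mathrm{image}$ is a surjection of $\field[u]$-modules each generated in the same degrees (both have indecomposables $\Phi M$), between which the induced map on indecomposables is the identity $\Phi M \cong \Phi M$; a graded-connected Nakayama argument then forces it to be an isomorphism, giving (1) and (2). Finally (3) follows from Lemma~\ref{lem:tfree-quotient}: the cartesian square just established, together with exactness of localization, shows $(\field[u]\otimes M)/\rprime_1 M$ is $u$-torsion free, hence $\field[u]$-free by Lemma~\ref{lem:u-torsion-free}.

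The main obstacle I anticipate is the identification of $\rprime_1 M$ with the $\field[u]$-submodule of $\field[u] \otimes M$ generated by $Sq_0(\Phi M)$ — equivalently, showing that the natural map $\rprime_1 M \rightarrow \field[u]\otimes M$ becomes injective precisely when $M$ is reduced. The danger is that $\rtilde_1$ is not right exact (as the Remark after Proposition~\ref{prop:fundamental-rtilde} warns), so the cokernel $\rprime_1 M$ of $\rtilde_1 f$ could a priori be larger than the naive image; one must carefully use that $\Phi P_i \hookrightarrow P_i$ and the commutativity of diagram~(\ref{eqn:rho-prime}) to control the kernel of $\rprime_1 M \rightarrow \field[u]\otimes M$, and it is exactly at the step "$\ker(\rprime_1 M \to \field[u]\otimes M)$ maps to zero under $\rhoprime_M$, but also its image in $\field[u]\otimes M$ lands in $\overline{\field[u]}\otimes M$, forcing it to be $u$-divisible" that reducedness of $M$ is indispensable — without it, $Sq_0$ has a kernel and the argument breaks, consistent with the known fact that $R_1$ (hence $\rprime_1$) does not embed in $\field[u]\otimes -$ for general $M$.
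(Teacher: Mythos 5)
Your proof is correct and follows essentially the same route as the paper: identify $\indec\rprime_1 M\cong\Phi M$ via Proposition~\ref{prop:rho-prime}, use the square~(\ref{diag:rprime-Sq0}) together with injectivity of $Sq_0$ (reducedness of $M$) to show the image $X\subset\field[u]\otimes M$ is $\field[u]$-free on $\Phi M$, and then conclude via Lemmas~\ref{lem:equiv-cond}, \ref{lem:tfree-quotient} and a graded Nakayama argument. One small point of rigour worth keeping in mind: ``a surjection of graded connected $\field[u]$-modules inducing an isomorphism on indecomposables is an isomorphism'' is false unconditionally (consider $\field[u]\twoheadrightarrow\field[u]/(u^2)$); it works here because you have already established that the target $X$ is free, so the surjection $\rprime_1 M\twoheadrightarrow X$ splits and Nakayama applies to the direct-summand kernel --- this is exactly how the paper phrases the step, via the composite $\field[u]\otimes\Phi M\twoheadrightarrow\rprime_1 M\twoheadrightarrow X\cong\field[u]\otimes\Phi M$ being an isomorphism.
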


\begin{proof}
Let $X$ denote the image of $\rprime_1 M$ in $\field [u] \otimes M$, so that
$X$ is an object of $\field [u]\dash \unst$. There is a surjection $\Phi M \cong
\indec (\rprime _1 M) \twoheadrightarrow \indec X$ and the diagram
(\ref{diag:rprime-Sq0}) shows that this is an isomorphism, using the injectivity
of $Sq_0$ for $M$.

Hence, by choice of a subspace of $\field[u]$-generators of $\rtilde_1 M$, 
there are surjective morphisms of $\field [u]$-modules
\[
 \field [u] \otimes \Phi M \twoheadrightarrow \rtilde_1 M \twoheadrightarrow X
\cong \field [u] \otimes \Phi M
\]
such that the composite is an isomorphism. It follows that both morphisms are
isomorphisms of $\field [u]$-modules. This completes the proof of the first two
statements.

The final statement follows from Lemmas \ref{lem:tfree-quotient} and 
\ref{lem:equiv-cond}.
\end{proof}

\begin{thm}
\label{thm:rprime-reduced-rtilde}
Let $M$ be a reduced unstable module, then the canonical morphism 
\[
\rprime_1 M
\rightarrow 
\rtilde_1 M 
\]
is an isomorphism. 
\end{thm}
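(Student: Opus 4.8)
The plan is to show that the canonical morphism $\rprime_1 M \to \rtilde_1 M$ is both a monomorphism and an epimorphism of $\field[u]$-modules, using the structural results just established for reduced $M$. By Proposition \ref{prop:red-free-R1}, for $M$ reduced the composite $\rprime_1 M \to \rtilde_1 M \hookrightarrow \field[u] \otimes M$ is a monomorphism; since $\rtilde_1 M \hookrightarrow \field[u]\otimes M$ is already a monomorphism, it follows immediately that $\rprime_1 M \to \rtilde_1 M$ is a monomorphism. So the only real content is surjectivity.

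For surjectivity, first I would compare indecomposables. By Proposition \ref{prop:rho-prime}(2), $\indec(\rprime_1 M) \cong \Phi M$, and by Lemma \ref{lem:proj-rtilde-indec} together with the construction of $\rtilde_1$, one checks that $\indec(\rtilde_1 M) \cong \Phi M$ as well (this uses that $\rtilde_1 M$ satisfies the cartesian-square condition of Proposition \ref{prop:fundamental-rtilde}(6), hence Lemma \ref{lem:equiv-cond} applies, and the generators of $\rtilde_1 M$ inject into $M$ via $\epsilon \otimes M$ — for reduced $M$ this subspace of generators is forced to map isomorphically onto $\Phi M \subset M$ by the same argument as in the proof of Proposition \ref{prop:red-free-R1}). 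Moreover the natural transformation $\rprime_1 M \to \rtilde_1 M$ is compatible with the identifications $\rhoprime_M$ and $\tilde{\rho}_M$ down to $\Phi M$, so it induces the identity on $\indec(-) \cong \Phi M$.

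Now I would combine this with freeness. Both $\rprime_1 M$ (by Proposition \ref{prop:red-free-R1}(2)) and $\rtilde_1 M$ (by Proposition \ref{prop:fundamental-rtilde}(1)) are free $\field[u]$-modules; indeed each is free on a graded subspace mapping isomorphically onto $\Phi M$. A morphism of free connected graded $\field[u]$-modules which is an isomorphism on indecomposables is necessarily surjective (a graded version of Nakayama), and being an injective morphism between free $\field[u]$-modules which induces an isomorphism on $\field \otimes_{\field[u]} (-) = \indec(-)$, it must in fact be an isomorphism. This gives the result.

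The main obstacle I expect is making precise the claim $\indec(\rtilde_1 M) \cong \Phi M$ for reduced $M$ and the compatibility of $\rprime_1 M \to \rtilde_1 M$ with the projections to $\Phi M$: Lemma \ref{lem:rho-P} and Lemma \ref{lem:proj-rtilde-indec} are stated only for projectives, and one must verify that the relevant diagrams pass through a projective presentation $P_1 \to P_0 \to M \to 0$ compatibly — this is where reducedness is used, via injectivity of $Sq_0$ on $M$, exactly as in the proof of Proposition \ref{prop:red-free-R1}. Once that bookkeeping is in place, the rest is the formal Nakayama argument for graded connected $\field[u]$-modules. An alternative, slightly slicker route is to observe directly from Proposition \ref{prop:red-free-R1} that the image $X$ of $\rprime_1 M$ in $\field[u]\otimes M$ is free on a copy of $\Phi M$ which is also the space of generators realizing $\rtilde_1 M$ as a free module, so the inclusion $X \hookrightarrow \rtilde_1 M$ of submodules of $\field[u]\otimes M$ is an equality; then $\rprime_1 M \to X = \rtilde_1 M$ is the desired isomorphism since $\rprime_1 M \to X$ was shown to be an isomorphism in the proof of Proposition \ref{prop:red-free-R1}.
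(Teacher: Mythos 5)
Your proposal correctly reduces to surjectivity (the injectivity part agrees with the paper), and the Nakayama strategy via indecomposables is a genuinely different route from the paper's, which instead applies $\fix$: the paper notes that the cokernel $\rtilde_1 M/\rprime_1 M$ is a submodule of the $\field[u]$-free module $(\field[u]\otimes M)/\rprime_1 M$, hence $\field[u]$-free, then uses Theorem \ref{thm:fix-rprime} and exactness of $\fix$ to see $\fix(\rtilde_1 M/\rprime_1 M)=0$, and finally invokes the criterion $\fix(N)=0 \Leftrightarrow N[u^{-1}]=0$ from \cite{lz2} to kill a $u$-torsion-free module.

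However, your route has a genuine gap precisely at the point you flag as ``bookkeeping''. You need to know that $\indec(\rtilde_1 M)\cong\Phi M$ for $M$ reduced, equivalently that a space of $\field[u]$-generators of $\rtilde_1 M$ (which exists and injects into $M$ by Proposition \ref{prop:fundamental-rtilde}(6) and Lemma \ref{lem:equiv-cond}) lands exactly in $\Phi M\subset M$. The lemmas you cite, Lemma \ref{lem:rho-P} and Lemma \ref{lem:proj-rtilde-indec}, prove this only for $P$ projective, and the mechanism there is that the image of $\rtilde_1 P$ in $P$ lies in $\ker(P\to\Sigma\redT M)$, which equals $\Phi P$ because $\alpha_P:\Omega P\to\redT P$ is a monomorphism. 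That injectivity is a feature of nilclosed modules (Proposition \ref{prop:division}), not of reduced ones: Example \ref{exam:alpha-notinj} exhibits the reduced module $M=\Phi F(1)$ for which $\alpha_M$ is the zero map on $\Omega M=\Sigma\field\neq 0$. So for general reduced $M$ the argument ``image of $\rtilde_1 M$ in $M$ is contained in $\Phi M$'' does not go through, and the identification of $\indec(\rtilde_1 M)$ is left unproved. The argument in Proposition \ref{prop:red-free-R1} does not help here either: it controls $\rprime_1 M$ and its image $X$ (constructed from a projective presentation, where the Lemma \ref{lem:rho-P} mechanism is available), but says nothing directly about $\indec(\rtilde_1 M)$; it gives $X\subseteq\rtilde_1 M$ and a split exact sequence $0\to X\to\rtilde_1 M\to\rtilde_1 M/X\to 0$ of free modules, whence $\indec(\rtilde_1 M)\cong\Phi M\oplus\indec(\rtilde_1 M/X)$, and showing the extra summand vanishes is exactly equivalent to the statement being proved. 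The same gap undermines the ``slicker route'': nothing established so far identifies the generators of $\rtilde_1 M$ with those of $X$. The paper's use of $\fix$ is what supplies the missing input, and without it (or a substitute) the indecomposables argument is circular.
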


\begin{proof}
Proposition \ref{prop:red-free-R1} implies that $\rprime_1 M \rightarrow
\rtilde_1 M $
is a monomorphism with quotient $\rtilde_1 M/\rprime_1 M $ which is free as an
$\field
[u]$-module. Moreover, Theorem \ref{thm:fix-rprime} implies that $\fix \rprime_1
M
\rightarrow \fix \rtilde_1 M $ is an isomorphism, thus $\fix (\rtilde_1
M/\rprime_1 M
) =0$. 

Recall from \cite[Proposition 0.8]{lz2} that, for $N$ an object of
$\field[u] \dash \unst$, $\fix (N)=0$ if and only if
$N[u^{-1}] =0$. Thus the condition 
$\fix (\rtilde_1 M/\rprime_1 M
) =0$ implies that $\rtilde_1 M /\rprime_1 M =0$, as required.
\end{proof}

\begin{nota}
 Let $\unstred$ denote the full subcategory of reduced unstable modules
in $\unst$. 
\end{nota}

\begin{cor}
\label{cor:reduced-exact}
The functor $\rtilde_1 |_\unstred : \unstred \rightarrow \field [u]\dash
\unst$ is exact.
\end{cor}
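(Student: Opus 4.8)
The plan is to derive the exactness of $\rtilde_1|_{\unstred}$ from Theorem~\ref{thm:rprime-reduced-rtilde} together with the known properties of $\rprime_1 \cong R_1$ (Theorem~\ref{thm:rprime-R}), or — preferably, to stay self-contained — directly from the structural results of Proposition~\ref{prop:red-free-R1}. Since $\rtilde_1$ is already left exact by Proposition~\ref{prop:fundamental-rtilde}(4), the only thing to check is right exactness: if $0 \to M' \to M \to M'' \to 0$ is a short exact sequence of reduced unstable modules, then $\rtilde_1 M \to \rtilde_1 M''$ is surjective. Note that the subcategory $\unstred$ is closed under subobjects and extensions but \emph{not} quotients in general; however, a quotient of a reduced module by a reduced submodule need not be reduced, so one must be slightly careful — the statement is about those short exact sequences all of whose terms happen to be reduced.

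First I would reduce to a diagram chase. Apply $\rtilde_1$ to the sequence; by left exactness we get $0 \to \rtilde_1 M' \to \rtilde_1 M \to \rtilde_1 M''$, and by Proposition~\ref{prop:fundamental-rtilde}(3) the map $\rtilde_1 M' \to \rtilde_1 M$ is the inclusion $\rtilde_1 M \cap (\field[u]\otimes M') $. The cokernel $Q$ of $\rtilde_1 M \to \rtilde_1 M''$ sits inside $\field[u]\otimes M''$ modulo the image, and I want $Q = 0$. The two tools available are: (i) the functor $\fix$ is exact (used in Section~\ref{sect:fix}), so $\fix$ applied to $0 \to \rtilde_1 M' \to \rtilde_1 M \to \rtilde_1 M'' \to Q \to 0$ gives, via Proposition~\ref{prop:fix-rtilde} and Theorem~\ref{thm:rprime-reduced-rtilde}, the sequence $0 \to M' \to M \to M'' \to \fix Q \to 0$, which forces $\fix Q = 0$; and (ii) the criterion \cite[Proposition 0.8]{lz2} that $\fix Q = 0$ iff $Q[u^{-1}] = 0$. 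So it suffices to show $Q$ is $u$-torsion free, for then $Q \hookrightarrow Q[u^{-1}] = 0$.

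The $u$-torsion-freeness of $Q$ is the crux. I would argue as follows: by Proposition~\ref{prop:red-free-R1}(3) applied to $M''$, the quotient $(\field[u]\otimes M'')/\rprime_1 M''$ is $\field[u]$-free, and by Theorem~\ref{thm:rprime-reduced-rtilde} this is $(\field[u]\otimes M'')/\rtilde_1 M''$; similarly for $M$. Localizing at $u$ (exact, Proposition~\ref{prop:fundamental-rtilde}(6)), $\rtilde_1 M[u^{-1}]$ is the equalizer of $\field[u^{\pm1}]\otimes M \rightrightarrows \field[u^{\pm1}]\otimes TM$, and similarly for $M''$; since $T$ is exact and localization is exact, the sequence $0 \to \rtilde_1 M'[u^{-1}] \to \rtilde_1 M[u^{-1}] \to \rtilde_1 M''[u^{-1}] \to 0$ is short exact (an equalizer of a split-injective pair commutes with exact functors in this setting — or more concretely, use that each $\field[u^{\pm1}]\otimes(-)$ term is exact in the module variable and chase). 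Hence $Q[u^{-1}] = 0$. Combined with the cartesian-square property Proposition~\ref{prop:fundamental-rtilde}(6), which gives $\rtilde_1 M'' \hookrightarrow \rtilde_1 M''[u^{-1}]$ sitting inside $\field[u^{\pm1}]\otimes M''$, one sees $Q \hookrightarrow Q[u^{-1}] = 0$, so $Q = 0$.

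The main obstacle I anticipate is verifying that $\rtilde_1(-)[u^{-1}]$ is exact on short exact sequences of reduced modules — i.e. that localizing the equalizer defining $\rtilde_1$ turns it into an exact functor. This is where one must use that the pair $\field[u^{\pm1}]\otimes M \rightrightarrows \field[u^{\pm1}]\otimes TM$ is reflexive (Lemma~\ref{lem:reflexive}, which localizes) and that the retraction splits the equalizer off as a direct summand after localization, so that the equalizer is computed term-by-term as a kernel of a split-surjection-complemented map; alternatively, identify $\rtilde_1 M[u^{-1}]$ explicitly as $\field[u^{\pm1}]\otimes \Phi M$-shaped — in fact, for $M$ reduced, Proposition~\ref{prop:red-free-R1}(2) plus localization identifies it with a free $\field[u^{\pm1}]$-module functorially in $M$, whence exactness is immediate. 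Once that identification is in hand, the rest is a routine five-lemma and $u$-torsion argument as above.
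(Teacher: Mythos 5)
Your first two steps are sound: left exactness reduces the problem to surjectivity of $\rtilde_1 M \to \rtilde_1 M''$, and applying the exact functor $\fix$ to $0 \to \rtilde_1 M' \to \rtilde_1 M \to \rtilde_1 M'' \to Q \to 0$ together with Proposition~\ref{prop:fix-rtilde} forces $\fix Q = 0$, hence $Q[u^{-1}] = 0$ by \cite[Proposition~0.8]{lz2}. The gap is the final step, where you assert $Q \hookrightarrow Q[u^{-1}]$. That inclusion requires $Q$ to be $u$-torsion free, and nothing you cite establishes this. The cartesian square of Proposition~\ref{prop:fundamental-rtilde}(6) gives $\rtilde_1 M'' \hookrightarrow \rtilde_1 M''[u^{-1}]$, but $Q$ is a \emph{quotient} of $\rtilde_1 M''$ by the image $I$ of $\rtilde_1 M$, and quotients of $u$-torsion-free modules are $u$-torsion free only when the submodule is $u$-saturated, i.e.\ $I = \rtilde_1 M'' \cap I[u^{-1}]$; this is precisely what needs proving and is not a formal consequence of anything stated. (Note that Proposition~\ref{prop:red-free-R1}(3) concerns the quotient $(\field[u]\otimes M'')/\rtilde_1 M''$, not $\rtilde_1 M''/I$, so it does not apply.) Your alternative justification via ``$\rtilde_1(-)[u^{-1}]$ is exact'' is also not a fix: that exactness is equivalent to $Q[u^{-1}]=0$, which you already have from $\fix$, and still leaves the torsion-freeness of $Q$ untouched. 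The reflexivity of the equalizer does not split the equalizer off as a direct summand, even after localization.

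The paper's proof is different and much shorter: since $\rprime_1$ is by construction the left Kan extension along $\unstproj \hookrightarrow \unst$, it is right exact, so a surjection $P \twoheadrightarrow M''$ from a projective induces a surjection $\rprime_1 P \twoheadrightarrow \rprime_1 M''$; by Theorem~\ref{thm:rprime-reduced-rtilde} the vertical comparison maps $\rprime_1 \to \rtilde_1$ are isomorphisms on $P$ (projective, hence nilclosed) and on $M''$ (reduced), so $\rtilde_1 P \twoheadrightarrow \rtilde_1 M''$, and this factors through $\rtilde_1 M$ for any chosen surjection $P \twoheadrightarrow M$ lifting $M \twoheadrightarrow M''$. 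If you want to keep your $\fix$-based line, a clean way to close the gap is to replace the torsion-freeness claim by an application of the right-exact functor $\indec$: applying $\indec$ to $\rtilde_1 M \to \rtilde_1 M'' \to Q \to 0$ and using the natural identification $\indec(\rtilde_1 N) \cong \Phi N$ for $N$ reduced (Proposition~\ref{prop:rho-prime}(2) together with Theorem~\ref{thm:rprime-reduced-rtilde}) yields $\Phi M \to \Phi M'' \to \indec Q \to 0$, whence $\indec Q = 0$; the graded Nakayama lemma for connected $\field[u]$-modules then gives $Q = 0$, with no torsion-freeness needed.
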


\begin{proof}
 The functor $\rtilde_1$ is left exact, hence it suffices to prove that it
preserves surjections between reduced unstable modules; a straightforward
reduction shows that it is sufficient to establish that it preserves
surjections from a projective to a reduced unstable module. This is an
immediate consequence of Theorem \ref{thm:rprime-reduced-rtilde}.
\end{proof}

%%%%%%%%%%%%%%%%%%%%%%%%%%%%%%%%%%%%%%%%%%%%%%%%%%%%%%%%%%%%%%%%%%%%%%%%%%%%%%
\subsection{Further consequences}

\begin{defn}
\label{def:Q1}
Let $C_1 : \unst \rightarrow \field [u] \dash \unst$ be the functor defined by
$C_1 M := \mathrm{image} (\overline{\tau}_M) \subset \overline{\field [u] }
\otimes \redT M$.
\end{defn}

\begin{cor}
 \label{cor:reduced-R1-Q1}
Let $M$ be a reduced unstable module. 
There is a natural short exact sequence
in $\field [u] \dash \unst$:
\begin{eqnarray}
 \label{diag:red-ses}
 0 
\rightarrow 
\rprime_1 M 
\rightarrow 
\field [u] \otimes M
\rightarrow 
C_1 M
\rightarrow 
0,
\end{eqnarray}
such that  the following properties are satisfied:
\begin{enumerate}
 \item 
there is a canonical monomorphism $C_1 M \hookrightarrow \overline{\field [u]}
\otimes \redT M
$ in $\field [u] \dash \unst$;
\item
the underlying unstable module of $C_1 M$ is reduced;
\item
$C_1 M$ is free as an $\field [u]$-module;
\item
applying the functor $\indec$ to the short exact
sequence (\ref{diag:red-ses}) induces the short exact sequence 
 \[
0 \rightarrow \Phi M \rightarrow M \rightarrow \Sigma \Omega M \rightarrow 0.  
 \]
\item
$\fix (C_1 M) \cong \redT M$ and the functor $\fix$ applied to the short exact
sequence (\ref{diag:red-ses}) induces the short exact sequence 
\[
 0 \rightarrow M \rightarrow TM \rightarrow \redT M \rightarrow 0.
\]
\end{enumerate}
Moreover, the functor $C_1|_\unstred : \unstred \rightarrow \field [u]\dash
\unst$ is exact.
\end{cor}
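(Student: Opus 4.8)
The plan is to extract everything from the tautological short exact sequence attached to the morphism $\overline{\tau}_M$, combined with the identification $\rprime_1 M \cong \rtilde_1 M$ for reduced $M$ (Theorem \ref{thm:rprime-reduced-rtilde}) and the behaviour of $\rtilde_1$ under $\indec$ and $\fix$ established earlier.

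First I would build the sequence. By Lemma \ref{lem:rtilde-redT} one has $\rtilde_1 M = \ker \overline{\tau}_M$, while $C_1 M = \mathrm{image}(\overline{\tau}_M)$ by Definition \ref{def:Q1}; the first isomorphism theorem in the abelian category $\field[u]\dash\unst$ therefore provides, for \emph{every} unstable module $M$, a natural short exact sequence $0 \to \rtilde_1 M \to \field[u] \otimes M \to C_1 M \to 0$. When $M$ is reduced, Theorem \ref{thm:rprime-reduced-rtilde} rewrites the left-hand term, yielding (\ref{diag:red-ses}). Properties (1)--(3) then follow at once: (1) is the defining inclusion $C_1 M \hookrightarrow \overline{\field[u]}\otimes\redT M$; (2) holds because $\redT M$ is reduced (the $T$-functor, hence $\redT$, preserves reduced modules), $\overline{\field[u]}$ is reduced, a tensor product of reduced unstable modules is reduced, and a submodule of a reduced module is reduced; (3) is precisely the assertion that $(\field[u]\otimes M)/\rprime_1 M$ is $\field[u]$-free, which is Proposition \ref{prop:red-free-R1}(3).

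For (4) I would apply the right exact functor $\indec$ to (\ref{diag:red-ses}), obtaining an exact sequence $\indec(\rprime_1 M) \to \indec(\field[u]\otimes M) \to \indec(C_1 M) \to 0$. Using $\indec(\rprime_1 M) \cong \Phi M$ (Proposition \ref{prop:rho-prime}(2)) and the natural isomorphism $\indec(\field[u]\otimes M) \cong M$, the commutativity of (\ref{diag:rprime-Sq0}) identifies the first arrow with $Sq_0 : \Phi M \to M$; since $M$ is reduced this is a monomorphism with cokernel $\Sigma\Omega M$ (the short exact sequence $0 \to \Phi M \to M \to \Sigma\Omega M \to 0$, cf. \cite{sch}), which is exactly the claimed sequence. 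For (5) I would apply the exact functor $\fix$ to (\ref{diag:red-ses}), obtaining a short exact sequence $0 \to \fix(\rprime_1 M) \to \fix(\field[u]\otimes M) \to \fix(C_1 M) \to 0$; here $\fix(\rprime_1 M) \cong M$ by Theorem \ref{thm:fix-rprime} and $\fix(\field[u]\otimes M) \cong TM$, and, combining Theorem \ref{thm:fix-rprime} with the proof of Proposition \ref{prop:fix-rtilde}, the first arrow is the canonical split inclusion $M \cong T_0 M \hookrightarrow TM$. Hence $\fix(C_1 M) \cong TM/M \cong \redT M$ via the natural splitting $TM \cong M \oplus \redT M$, which gives (5).

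It remains to prove that $C_1|_\unstred$ is exact. The sequence $0 \to \rtilde_1 \to \field[u]\otimes - \to C_1 \to 0$ is a short exact sequence of functors (exact on every object), the functor $\field[u]\otimes -$ is exact, and $\rtilde_1|_\unstred$ is exact by Corollary \ref{cor:reduced-exact}. Given a short exact sequence of reduced unstable modules, the associated $3\times 3$ commutative diagram then has all three rows exact and two of its columns exact, so the nine lemma forces the remaining column --- the image of the sequence under $C_1$ --- to be exact as well. I expect the only delicate point to be the bookkeeping in (4) and (5): one must follow the natural transformation $\rprime_1 \to \rtilde_1$ and the canonical inclusions $\rtilde_1 M \hookrightarrow \field[u]\otimes M$ through diagram (\ref{diag:rprime-Sq0}) and through the split equalizer in the proof of Proposition \ref{prop:fix-rtilde} carefully enough to recognise the two cokernels as $\Sigma\Omega M$ and $\redT M$; everything else is formal.
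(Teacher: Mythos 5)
Your argument is correct and follows essentially the same route as the paper's own proof: the tautological short exact sequence attached to $\overline{\tau}_M$, the identification $\rprime_1 M \cong \rtilde_1 M$ on reduced modules, and then $\indec$ (via diagram (\ref{diag:rprime-Sq0})) and $\fix$ (via Proposition \ref{prop:fix-rtilde}). The only cosmetic differences are that you cite Proposition \ref{prop:red-free-R1}(3) directly for the $\field[u]$-freeness of $C_1 M$ where the paper argues from $u$-torsion-freeness of $\overline{\field[u]}\otimes\redT M$, and you phrase the final exactness statement via the nine lemma where the paper simply invokes Corollary \ref{cor:reduced-exact} --- the substance is the same.
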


\begin{proof}
\ 
\begin{enumerate}
 \item 
Theorem \ref{thm:rprime-reduced-rtilde} implies that $\rprime_1 M$ is naturally
isomorphic to $\rtilde_1 M$, hence $\rprime_1 M$ is the kernel of
$\overline{\tau}_M$ and $C_1 M $ is the  image; the first statement follows.
\item
$C_1 M$ is a sub-object of the unstable module $\overline{\field [u]} \otimes
\redT M$, which is reduced since $M$ is reduced. 
\item
Similarly, using the fact that $\overline{\field [u]} \otimes
\redT M$ is $u$-torsion free.
\item
The functor $\indec$ is right exact and diagram (\ref{diag:rprime-Sq0}) implies
that $\indec (\rtilde_1 M ) \rightarrow \index (\field [u] \otimes M) $ is the
monomorphism $\Phi M \hookrightarrow M$, using the fact that $M$ is reduced.
Hence $\indec (C_1 M) \cong \Sigma \Omega M$. 
\item
The functor $\fix$ is exact and the morphism $\fix (\rtilde_1 M ) \rightarrow
\fix (\field [u] \otimes M) $ identifies with the canonical inclusion $M
\hookrightarrow TM$, by  Proposition \ref{prop:fix-rtilde}. The result follows.
\end{enumerate}
The final statement concerning the exactness of $C_1 |_\unstred$ follows
from the exactness of $\rtilde_1 |_\unstred$ established in Corollary
\ref{cor:reduced-exact} and the exactness of $\redT$.
\end{proof}

\subsection{The functor $\rprime_1$ on nilclosed unstable modules}
\label{sect:nilclosed}

 The considerations of Corollary \ref{cor:reduced-R1-Q1} can be pushed further
if the natural transformation $\alpha_M : \Omega M \rightarrow \redT M $ (see
Definition \ref{def:alpha}) is injective; this is the case  if $M$ is nilclosed,
by
Proposition \ref{prop:division}.

The following definitions apply to arbitrary unstable modules.

\begin{defn}
\label{def:Q2}
Let $C_2 :\unst \rightarrow \field [u]\dash
\unst$ be the functor  $C_2 M:= \mathrm{coker} (\overline{\tau}_M)$.
\end{defn}

The division functor $(- : \usquare) $ is introduced in Section
\ref{sect:division}.

\begin{thm}
\label{thm:R1-Q2-nilclosed}
 Let $M$ be a nilclosed unstable module. Then 
\begin{enumerate}
 \item 
there is a natural exact sequence in
$\field [u]\dash \unst$:
\begin{eqnarray}
 \label{eqn:R2-Q2}
 0
\rightarrow
\rprime_1 M
\rightarrow 
\field [u] \otimes M 
\rightarrow 
\overline{\field[u]} \otimes \redT M 
\rightarrow C_2 M
\rightarrow 
0;
\end{eqnarray}
\item
applying the functor $\indec  : \field [u] \dash \unst \rightarrow \unst$ to
the diagram (\ref{eqn:R2-Q2}) induces an exact sequence 
\[
 0
\rightarrow 
\Phi M 
\rightarrow 
M 
\rightarrow 
\Sigma \redT M
\rightarrow 
\Sigma (M: \usquare)
\rightarrow 
0;
\]
\item
the underlying $\field [u]$-module of $C_2 M$ is free on $\Sigma (M:
\usquare)
$;
\item
applying the functor $\fix  : \field [u] \dash \unst \rightarrow \unst$ to
the diagram (\ref{eqn:R2-Q2}) induces an exact sequence 
\[
 0
\rightarrow 
M
\rightarrow 
TM 
\rightarrow 
T \redT M 
\rightarrow 
\redT^2 M
\rightarrow 0,
\]
in which the morphism $TM \rightarrow T \redT M$ is the reduction of the
morphism $T_\field M \rightarrow T_{\field ^2 } M $ induced by the diagonal map
$\field \rightarrow \field ^2$.
\end{enumerate}
\end{thm}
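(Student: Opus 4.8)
The plan is to build the four-term exact sequence \eqref{eqn:R2-Q2} by splicing together two short exact sequences, exactly as the introduction of the functors $C_1$ and $C_2$ suggests, and then to verify the consequences (2), (3), (4) by applying the two \emph{exact} functors $\indec$ (in fact only right-exact, so one must be a little careful) and $\fix$ and tracking the maps via the identifications already established. First I would note that, since $M$ is nilclosed, Corollary \ref{cor:reduced-R1-Q1} (or rather its nilclosed analogue, since nilclosed modules are reduced) gives the short exact sequence $0 \to \rprime_1 M \to \field[u]\otimes M \to C_1 M \to 0$, with $C_1 M = \mathrm{image}(\overline{\tau}_M) \hookrightarrow \overline{\field[u]}\otimes\redT M$. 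On the other hand, by Definitions \ref{def:Q1} and \ref{def:Q2} there is tautologically a short exact sequence $0 \to C_1 M \to \overline{\field[u]}\otimes\redT M \to C_2 M \to 0$. Splicing these at $C_1 M$ produces precisely the four-term sequence \eqref{eqn:R2-Q2}, proving (1); naturality is inherited from the naturality of $\overline{\tau}_M$.

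Next I would extract (2). The key input is that $\indec$ is right exact and that, by Lemma \ref{lem:proj-rtilde-indec} and diagram \eqref{diag:rprime-Sq0}, $\indec(\rprime_1 M) \cong \Phi M$ with the map to $\indec(\field[u]\otimes M) \cong M$ being the monomorphism $Sq_0$ (here using that $M$ is reduced, so $Sq_0$ is injective). Thus $\indec$ applied to $0 \to \rprime_1 M \to \field[u]\otimes M \to C_1 M \to 0$ yields $0 \to \Phi M \to M \to \indec(C_1 M)\to 0$, and one identifies $\indec(C_1 M) \cong \Sigma\Omega M$ as in Corollary \ref{cor:reduced-R1-Q1}(4). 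For the second short exact sequence I apply $\indec$ to $0 \to C_1 M \to \overline{\field[u]}\otimes\redT M \to C_2 M \to 0$: one computes $\indec(\overline{\field[u]}\otimes\redT M) \cong \Sigma\redT M$ (since $\indec(\overline{\field[u]}) \cong \Sigma\field$), and $\indec(C_1 M)\cong\Sigma\Omega M \hookrightarrow \Sigma\redT M$ is $\Sigma\alpha_M$, which is a monomorphism precisely because $M$ is nilclosed (Proposition \ref{prop:division}). Hence $\indec(C_2 M) \cong \Sigma\redT M/\Sigma\Omega M \cong \Sigma(\redT M/\Omega M) \cong \Sigma(M:\usquare)$, using the identification of the cokernel of $\alpha_M$ with the division functor from Section \ref{sect:division}. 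Splicing the resulting short exact sequences gives the four-term sequence in (2). Statement (3) then follows: $C_2 M$ is a sub-quotient of the $u$-torsion-free module $\overline{\field[u]}\otimes\redT M$ — more precisely a quotient — so one must check $u$-torsion-freeness directly; the cleanest route is to observe that $C_2 M$ has $\field[u]$-generators mapping isomorphically onto $\indec(C_2 M)\cong\Sigma(M:\usquare)$ via the augmentation, and then invoke Lemmas \ref{lem:equiv-cond} and \ref{lem:tfree-quotient} (or simply Lemma \ref{lem:u-torsion-free}) to conclude freeness on $\Sigma(M:\usquare)$.

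Finally, for (4) I apply $\fix$, which is genuinely exact. By Proposition \ref{prop:fix-rtilde} and Theorem \ref{thm:fix-rprime}, $\fix\rprime_1 M \cong M$ and the map $\fix\rprime_1 M \to \fix(\field[u]\otimes M) \cong TM$ is the canonical inclusion $M\hookrightarrow TM$. Applying $\fix$ to the first short exact sequence gives $0 \to M \to TM \to \fix(C_1 M)\to 0$, and one identifies $\fix(C_1 M)\cong\redT M$ as in Corollary \ref{cor:reduced-R1-Q1}(5). Applying $\fix$ to the second short exact sequence requires computing $\fix(\overline{\field[u]}\otimes\redT M)$: from the short exact sequence $0\to\overline{\field[u]}\to\field[u]\to\field\to 0$ and exactness of $\fix$ together with $\fix(\field[u]\otimes\redT M)\cong T\redT M$ and $\fix(\field\otimes\redT M) = \fix(\redT M$ with trivial $u$-action$)$, one gets $\fix(\overline{\field[u]}\otimes\redT M)\cong \ker(T\redT M \to \redT M)\cong\overline{T}\,\redT M = \redT^2 M$; here the relevant map is $T\redT M\to\redT M$ coming from $\field\to 0$, whose kernel is $\redT^2 M$ by the splitting $T\redT M\cong\redT M\oplus\redT^2 M$. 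Hence $\fix$ applied to \eqref{eqn:R2-Q2} yields $0\to M\to TM\to\fix(\overline{\field[u]}\otimes\redT M)\to\fix(C_2 M)\to 0$, and combined with the first short exact sequence one sees $\fix(C_2 M)\cong\redT^2 M/\,\mathrm{image} = \redT^2 M$ — more carefully, the four-term sequence is $0\to M\to TM\to T\redT M\to\redT^2 M\to 0$ once one checks that the composite $TM\to\fix(C_1 M)\cong\redT M\hookrightarrow T\redT M$ agrees with the reduction of the map induced by the diagonal $\field\to\field^2$, which is exactly the content of Lemma \ref{lem:fix-tau}. The main obstacle I anticipate is bookkeeping: correctly identifying each connecting map (especially that $\indec(C_1 M)\to\indec(\overline{\field[u]}\otimes\redT M)$ is $\Sigma\alpha_M$ and that $TM\to T\redT M$ is the diagonal-induced map) rather than the exactness arguments themselves, which are routine given the exactness of $\fix$ and right-exactness of $\indec$.
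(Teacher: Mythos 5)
Your splicing strategy for parts (1)--(3) matches the paper's proof closely: the same two short exact sequences through $C_1 M$, the same use of the right exactness of $\indec$ together with the injectivity of $\Sigma\alpha_M$ (Proposition \ref{prop:division}), and the same appeal to Lemmas \ref{lem:tfree-quotient} and \ref{lem:equiv-cond} to establish freeness of $C_2 M$. Those parts are fine.

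There is, however, a genuine error in your treatment of part (4), in the computation of $\fix\bigl(\overline{\field[u]}\otimes\redT M\bigr)$. You claim that $\fix$ applied to the short exact sequence
$0\to\overline{\field[u]}\otimes\redT M\to\field[u]\otimes\redT M\to\redT M\to 0$
(where the rightmost term carries the trivial $u$-action) yields $\fix\bigl(\overline{\field[u]}\otimes\redT M\bigr)\cong\ker\bigl(T\redT M\to\redT M\bigr)\cong\redT^2 M$, implicitly asserting that $\fix$ of $\redT M$ with trivial $u$-action is $\redT M$. This is false: any object $N$ of $\field[u]\dash\unst$ on which $u$ acts as zero is $u$-torsion, hence $N[u^{-1}]=0$ and so $\fix N=0$ by \cite[Proposition 0.8]{lz2} (equivalently, a $\field[u]$-linear map from a $u$-torsion object to the $u$-torsion-free object $\field[u]\otimes L$ is necessarily zero). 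Consequently $\fix\bigl(\overline{\field[u]}\otimes\redT M\bigr)\cong T\redT M$, not $\redT^2 M$. Notice that your own final four-term sequence $0\to M\to TM\to T\redT M\to\redT^2 M\to 0$ already contradicts your intermediate computation, which would instead have produced the third term $\redT^2 M$; that inconsistency is the symptom of this gap. The paper avoids this by a different route: it observes that $\overline{\tau}_M$ factors as $\field[u]\otimes M\xrightarrow{\tau_M}\field[u]\otimes TM\to\overline{\field[u]}\otimes\redT M$, applies Lemma \ref{lem:fix-tau} to identify $\fix(\overline{\tau}_M)$ directly as a map $TM\to T\redT M$, and then factors that map as $TM\twoheadrightarrow\redT M\hookrightarrow T\redT M$, reading off $\fix(C_2 M)\cong\redT^2 M$ as the cokernel of the inclusion of $\redT M$ into $T\redT M\cong\redT M\oplus\redT^2 M$. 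Your splicing via $\fix(C_1 M)\cong\redT M$ would also work once $\fix\bigl(\overline{\field[u]}\otimes\redT M\bigr)$ is correctly identified as $T\redT M$, but as written the step does not go through.
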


\begin{proof}
 The first statement is an immediate consequence of the definitions, since
$\rprime_1 M \cong \rtilde_1 M$ when $M$ is reduced.

The
morphism
$\overline{\tau}_M$ factorizes (by definition of $C_1 M$) as
\[
 \field [u] \otimes M 
\twoheadrightarrow C_1 M
\stackrel{}{\hookrightarrow}
\overline{\field [u] } \otimes \redT M.
\]
By Corollary \ref{cor:reduced-R1-Q1}, applying the
functor $\indec$ to these
morphisms induces 
\[
 M \twoheadrightarrow \Sigma \Omega M 
\stackrel{\Sigma \alpha_M}{\hookrightarrow} 
\Sigma \redT M 
\]
where the monomorphism follows from the hypothesis that $M$ is nilclosed, by
Proposition \ref{prop:division}. This establishes the second
statement, by the right exactness of the functor $\indec$, using Proposition
\ref{prop:division} to identify the cokernel.

The underlying $\field [u]$-module of $\overline{\field [u]} \otimes \redT M$
is free on $\Sigma \redT M$. The fact that $C_2 M$ is $\field [u]$-free  follows
from Lemmas \ref{lem:tfree-quotient} and  \ref{lem:equiv-cond}, since the
indecomposables of $C_1 M $ map injectively to the indecomposables of
$\overline{\field [u]} \otimes \redT M$.

The functor $\fix$ is exact; moreover the morphism $\overline{\tau}_M$
factorizes naturally in $\field [u]\dash \unst$ as 
\[
 \xymatrix{
 \field [u] \otimes M 
\ar[r]^{\tau_M}
& 
\field [u] \otimes TM 
\ar[r]
&
\overline{\field [u]} \otimes \redT M .
}
\]
This gives the identification of the morphism $\fix(\overline{\tau}_M)$, by
Lemma \ref{lem:fix-tau}. This morphism factorizes as 
\[
 TM \twoheadrightarrow \redT M 
\hookrightarrow 
T \redT M \cong \redT M \oplus \redT ^2 M,
\]
where the second morphism is the product of the identity on $\redT M$ and the
reduced diagonal $\redT M \rightarrow \redT ^2 M$. The cokernel is natural
isomorphic to $\redT^2 M$. 
\end{proof}

\appendix
\section{Results on division functors}
\label{sect:appendix}

The aim of this appendix is to provide a proof of Proposition
\ref{prop:division}.

\begin{nota}
 For $M$ an unstable module, let $(-: M)$ denote the division functor which is
left adjoint to $M \otimes - : \unst \rightarrow \unst$ and let $(-:M)_n$
denote its $n$th left derived functor. 
\end{nota}

%%%%%%%%%%%%%%%%%%%%%%%%%%%%%%%%%%%%%%%%%%%%%%%%%%%%%%%%%%%%%%
\subsection{The loop functor on nilclosed unstable modules}
An unstable module $M$ is reduced if and only
if $Sq_0$ induces a
short exact
sequence 
 $
 0
\rightarrow 
\Phi M 
\stackrel{Sq_0}{\rightarrow}
M 
\rightarrow 
\Sigma \Omega M
\rightarrow 
0, 
 $
where $\Omega$ is the division functor $( - : \Sigma \field)$. This is
equivalent to the condition that $\Omega_1 M=0$.

\begin{lem}
\label{lem:loop-tensor}
 Let $M,N$ be reduced unstable modules. There exists a natural short exact
sequence 
\[
 0
\rightarrow 
\Omega (M \otimes N) 
\rightarrow 
(\Omega M \otimes N) \oplus (M \otimes \Omega N) 
\rightarrow 
\Sigma (\Omega M \otimes \Omega N)
\rightarrow 
0.
\]

\end{lem}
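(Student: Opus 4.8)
The strategy is to read off the short exact sequence from the behaviour of the functor $\Omega = (-:\Sigma\field)$ on a tensor product, using the known description of $\Omega$ and its first derived functor $\Omega_1$ via the short exact sequence $0 \to \Phi M \stackrel{Sq_0}{\to} M \to \Sigma\Omega M \to 0$ attached to a reduced module, together with the vanishing $\Omega_1 M = 0 = \Omega_1 N$. First I would recall (or invoke, since it is standard for division functors) that the total derived functors $(-:\Sigma\field)_*$ satisfy a Künneth-type formula: there is a natural short exact sequence built from $\Omega_*(M\otimes N)$ and the derived tensor products $\bigoplus_{i+j=*}\Omega_i M \otimes \Omega_j N$ together with a $\Sigma$-shifted $\mathrm{Tor}$-type correction. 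Concretely, since $\Sigma\field$ is a suspension, $(X : \Sigma\field) = \Omega X$ is computed by looping, and the defect in $\Omega(M\otimes N)$ versus $(\Omega M\otimes N)\oplus(M\otimes\Omega N)$ is measured by $\Sigma(\Omega M\otimes\Omega N)$ — this is precisely the statement that the coproduct on $\Sigma\field$ (as a coalgebra, $\Sigma\field$ being primitively generated in the relevant sense) forces a "Leibniz rule with a $\Sigma$-twisted symmetric correction term."

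Concretely, the key steps, in order, are as follows. (1) Use the defining short exact sequences $0\to\Phi M\to M\to\Sigma\Omega M\to 0$ and the analogue for $N$ and for $M\otimes N$ (all three modules are reduced: $M$, $N$ by hypothesis, $M\otimes N$ because the tensor product of reduced unstable modules is reduced). (2) Tensor the sequence for $M$ with $N$ and observe, using $\Phi M\otimes N$ versus $\Phi(M\otimes N)$ and the fact that $\Phi$ does not commute with $\otimes$ in the naive way — rather $\Phi(M\otimes N)\cong\Phi M\otimes\Phi N$ — that one gets a filtration of $\Phi(M\otimes N)$-type objects whose associated graded involves $\Phi M\otimes\Phi N$, $\Phi M\otimes\Sigma\Omega N$, and $\Sigma\Omega M\otimes N$. (3) Compare this with the sequence $0\to\Phi(M\otimes N)\to M\otimes N\to\Sigma\Omega(M\otimes N)\to 0$ directly: dividing the inclusion $\Phi M\otimes\Phi N=\Phi(M\otimes N)\hookrightarrow M\otimes N$ into the two partial inclusions through $\Phi M\otimes N$ and $M\otimes\Phi N$, the cokernel $\Sigma\Omega(M\otimes N)$ acquires a two-step filtration with quotients $\Sigma(\Omega M\otimes N)$ and $\Sigma(M\otimes\Omega N)$, overlapping in $\Sigma\Phi$-free part precisely along $\Sigma^2(\Omega M\otimes\Omega N)$; desuspending once yields the asserted four-term sequence $0\to\Omega(M\otimes N)\to(\Omega M\otimes N)\oplus(M\otimes\Omega N)\to\Sigma(\Omega M\otimes\Omega N)\to 0$. (4) Check naturality, which is automatic since every map used is natural in $M$ and $N$.

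The main obstacle I anticipate is step (3): making the "overlapping cokernels" bookkeeping precise and verifying that the connecting map $(\Omega M\otimes N)\oplus(M\otimes\Omega N)\to\Sigma(\Omega M\otimes\Omega N)$ is surjective with the correct kernel, rather than merely producing a spectral-sequence-type filtration with a priori ambiguous extensions. The cleanest way around this is probably to avoid the ad hoc diagram chase altogether and instead invoke the general base-change/Künneth formula for the derived division functors $(-:\Sigma\field)_*$ applied to the pair $(M,N)$: the input is $\Omega_1 M=\Omega_1 N=0$ (reducedness), and since $\Sigma\field$ is a one-dimensional (hence "regular") module, the derived tensor product degenerates to the single correction term $(\Omega M\otimes\Omega N)$, suspended once by a connecting homomorphism, giving exactly the stated sequence; the surjectivity and exactness then come for free from the long exact sequence of derived functors together with $\Omega_1(M\otimes N)$ being computed by that same Künneth formula. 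I would present the proof in this second form, citing the appendix's machinery on division functors (in particular Proposition \ref{prop:division} and the framework around it) for the Künneth degeneration.
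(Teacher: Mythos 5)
Your overall instinct --- that the sequence comes from a K\"unneth-type computation with the two-term resolutions $M \to \Sigma\Omega M$ --- is the right one, and it is in fact the paper's approach. But as written, neither of your two suggested routes actually closes. The paper's proof is: form the tensor product of the chain complexes $C^M = (M \to \Sigma\Omega M)$ and $C^N = (N \to \Sigma\Omega N)$; each is a length-one complex whose homology is $\Phi M$ (resp.\ $\Phi N$) concentrated in degree $0$, precisely because $M$ and $N$ are reduced. Over $\field_2$ the ordinary K\"unneth theorem (no Tor term) gives that the total complex
\[
M\otimes N \longrightarrow (M\otimes\Sigma\Omega N)\oplus(\Sigma\Omega M\otimes N) \longrightarrow \Sigma\Omega M\otimes\Sigma\Omega N
\]
has homology $\Phi M\otimes\Phi N \cong \Phi(M\otimes N)$ concentrated in degree $0$. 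Truncating and using that $M\otimes N$ is reduced (so the first cokernel is $\Sigma\Omega(M\otimes N)$), then desuspending once, yields exactly the stated short exact sequence.

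Your first plan (steps (2)--(3)) would, if made precise, give only an \emph{asymmetric} two-step filtration of $\Omega(M\otimes N)$ (with graded pieces $\Phi M\otimes\Omega N$ and $\Omega M\otimes N$, say), not the symmetric short exact sequence with $(\Omega M\otimes N)\oplus(M\otimes\Omega N)$ as the middle term; the ``overlapping cokernels'' language does not pin down the map to $\Sigma(\Omega M\otimes\Omega N)$ or its kernel, which is exactly the obstacle you identify. Your second plan invokes a ``general base-change/K\"unneth formula for derived division functors'' as a citable fact, but no such statement appears in the appendix or the references (Proposition \ref{prop:division} concerns $(-:\usquare)$ and says nothing of the kind), and the shape of the correction term here --- $\Sigma(\Omega M\otimes\Omega N)$, not a $\mathrm{Tor}$-type term --- is not what a generic K\"unneth for a derived functor would produce. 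What you want is simply the classical K\"unneth theorem for tensor products of chain complexes applied to $C^M\otimes C^N$; that is both the precise version of your intuition and the paper's one-line proof.
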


\begin{proof}
 The short exact sequence is given by the Künneth theorem applied to the tensor
product of the complexes $(M \rightarrow \Sigma \Omega M)$ and $(N \rightarrow
\Sigma \Omega N)$.
\end{proof}

\begin{lem}
\label{lem:omega-tensor-reduced}
 Let $M,N$ be unstable modules such that $M, N , \Omega M, \Omega N$ are
reduced. Then $\Omega (M \otimes N)$ is reduced.
\end{lem}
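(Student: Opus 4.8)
The statement to prove is Lemma \ref{lem:omega-tensor-reduced}: if $M$, $N$, $\Omega M$, $\Omega N$ are all reduced unstable modules, then $\Omega(M\otimes N)$ is reduced. The plan is to characterize reducedness via the vanishing of the first derived division functor $\Omega_1 = (-:\Sigma\field)_1$, exactly as recalled just before Lemma \ref{lem:loop-tensor}: an unstable module $P$ is reduced if and only if $\Omega_1 P = 0$. Thus I must show $\Omega_1(\Omega(M\otimes N)) = 0$, i.e.\ $\Omega_1 \Omega (M\otimes N)=0$.

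First I would exploit the short exact sequence of Lemma \ref{lem:loop-tensor},
\[
0 \to \Omega(M\otimes N) \to (\Omega M\otimes N)\oplus(M\otimes\Omega N) \to \Sigma(\Omega M\otimes\Omega N)\to 0,
\]
and apply the long exact sequence for the derived functors $\Omega_* = (-:\Sigma\field)_*$. Since $\Omega$ commutes with suspension appropriately and $(-:\Sigma\field)$ applied to a suspension is computable, the term $\Omega_*\Sigma(\Omega M\otimes\Omega N)$ is controlled: $\Omega(\Sigma X)\cong X$ and $\Omega_1(\Sigma X)=0$ for any unstable module $X$ (suspensions have no $\Phi$-part obstruction). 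So the relevant fragment of the long exact sequence reads
\[
\Omega_1\big((\Omega M\otimes N)\oplus(M\otimes\Omega N)\big) \to \Omega_1\Sigma(\Omega M\otimes\Omega N) \to \Omega(\Omega(M\otimes N)) \to \cdots
\]
wait — I need to be careful about the direction: the connecting map runs from the $\Omega_1$ of the \emph{quotient} into the $\Omega_0$ of the \emph{sub}, so the sequence around degree-zero/one is
\[
\cdots \to \Omega_1(\text{middle}) \to \Omega_1(\Sigma(\Omega M\otimes\Omega N)) \to \Omega_0\Omega(M\otimes N)\to \cdots,
\]
and separately
\[
0=\Omega_1\Omega(M\otimes N) \text{ follows once } \Omega_2(\Sigma(\Omega M\otimes\Omega N))=0 \text{ and } \Omega_1(\text{middle})=0.
\]
Both inputs hold: $\Omega_i\Sigma(-)=0$ for $i\ge 1$, and for the middle term I would show $\Omega_1(\Omega M\otimes N)=0=\Omega_1(M\otimes\Omega N)$ using a Künneth/derived-tensor computation together with the hypotheses that $M,N,\Omega M,\Omega N$ are all reduced (so each factor in these tensor products is reduced, hence has vanishing $\Omega_1$, and one checks the tensor of two reduced modules with vanishing $\Omega_1$ again has vanishing $\Omega_1$ — this is where the full list of four hypotheses is used).

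The key technical step, and the one I expect to be the main obstacle, is establishing that $\Omega_1(A\otimes B)=0$ when $A$ and $B$ are reduced: this requires understanding the derived functors of $(-:\Sigma\field)$ on a tensor product, presumably via the fact that $\Sigma\field\otimes-$ is exact so its left adjoint's derived functors fit a base-change/Künneth spectral sequence, degenerating here because $\Sigma\field$ is "small." Concretely I would use that $(A\otimes B : \Sigma\field)_* $ is computed by the complex $\Omega A\otimes B \oplus A\otimes\Omega B \to \Sigma(\Omega A\otimes\Omega B)$ of Lemma \ref{lem:loop-tensor} together with its higher analogues, or more directly derive the vanishing from $\Omega_1 A=\Omega_1 B=0$ by the same Künneth argument used to prove Lemma \ref{lem:loop-tensor} but now at the level of derived functors. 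Once $\Omega_1(\text{middle})=0$ is in hand, the long exact sequence argument above is routine and yields $\Omega_1\Omega(M\otimes N)=0$, hence $\Omega(M\otimes N)$ is reduced, as required.
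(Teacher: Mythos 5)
Your approach works, but it is a long detour compared to the paper's one-line argument. The paper also starts from the K\"unneth short exact sequence of Lemma~\ref{lem:loop-tensor}, but then simply observes that $\Omega(M\otimes N)$ is thereby a submodule of $(\Omega M\otimes N)\oplus(M\otimes\Omega N)$, which is reduced (tensor products and direct sums of reduced unstable modules are reduced), and a submodule of a reduced unstable module is reduced. No long exact sequence of derived functors is needed, and the step you flag as the ``main obstacle'' --- that $\Omega_1(A\otimes B)=0$ when $A,B$ are reduced --- is not a derived-tensor or K\"unneth spectral sequence computation: it is just the elementary statement that a tensor product of reduced modules is reduced, which follows because $Sq_0$ on $\Phi(A\otimes B)\cong\Phi A\otimes\Phi B$ is $Sq_0\otimes Sq_0$, hence injective when both factors are.

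There is also an error in your argument, fortunately an inessential one. You assert that $\Omega_i\Sigma(-)=0$ for all $i\ge 1$, in particular that $\Omega_1\Sigma X=0$ for every unstable module $X$. This is false: $Sq_0\colon\Phi\Sigma X\to\Sigma X$ is the zero map, since top Steenrod squares vanish on a suspension, so the exact sequence $0\to\Sigma\Omega_1\Sigma X\to\Phi\Sigma X\to\Sigma X\to\Sigma\Omega\Sigma X\to 0$ yields $\Omega_1\Sigma X\cong\Sigma\Phi X$; concretely $\Omega_1\Sigma\field\cong\Sigma\field\ne 0$. Your long exact sequence in fact only requires $\Omega_2\bigl(\Sigma(\Omega M\otimes\Omega N)\bigr)=0$, and this holds because $(-:\Sigma\field)_n$ vanishes for $n\ge 2$ on every unstable module, not for any special reason having to do with suspensions. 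With that correction, and with the middle-term vanishing recognized as elementary, your argument does go through --- but the paper's direct use of ``a submodule of a reduced module is reduced'' avoids the derived machinery entirely.
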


\begin{proof}
 The unstable module $\Omega (M \otimes N)$ embeds in $(\Omega M \otimes N)
\oplus (M \otimes \Omega N)$, which is reduced, by the hypothesis.
\end{proof}

\begin{lem}
\label{lem:omega-reduced-HV}
 The unstable module $ \Omega H^* (V)$ is reduced, for $V$ a finitely-generated
elementary abelian $2$-group. 
\end{lem}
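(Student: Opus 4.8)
The plan is to argue by induction on $\dim V$, using the Künneth isomorphism $H^*(V) \cong H^*(\field)^{\otimes \dim V}$ of unstable modules (a polynomial algebra on degree-one generators) together with Lemma \ref{lem:omega-tensor-reduced}. The inductive step is then essentially formal, and all the real content is concentrated in the one-generator base case.

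For the inductive step, write $H^*(V) \cong H^*(\field) \otimes H^*(W)$ with $\dim W = \dim V - 1$. Both tensor factors are reduced, since $Sq_0 : \Phi H^*(W) \to H^*(W)$ is the Frobenius $x \mapsto x^2$ on a polynomial algebra, hence injective; moreover $\Omega H^*(W)$ is reduced by the inductive hypothesis and $\Omega H^*(\field)$ is reduced by the base case below. Lemma \ref{lem:omega-tensor-reduced}, applied with $M = H^*(\field)$ and $N = H^*(W)$, then yields that $\Omega H^*(V)$ is reduced.

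It remains to treat $\dim V \leq 1$. The case $\dim V = 0$ is trivial, since $\Omega H^*(0) = \Omega \field = 0$. For $\dim V = 1$, write $H^*(\field) = \field [u]$ with $|u|=1$. As $\field [u]$ is reduced, the short exact sequence $0 \rightarrow \Phi \field [u] \stackrel{Sq_0}{\rightarrow} \field [u] \rightarrow \Sigma \Omega \field [u] \rightarrow 0$ identifies $\Sigma \Omega \field [u]$ with $\mathrm{coker}(Sq_0)$; since $Sq_0$ carries $u^n$ to $u^{2n}$, this cokernel is spanned by the odd powers $u^{2k+1}$, $k \geq 0$. Desuspending, $\Omega \field [u]$ is one-dimensional in each even degree $2k$ and zero in odd degrees. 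To conclude that $\Omega \field [u]$ is reduced, one checks that its own $Sq_0$ is injective: using $Sq^i u^m = \binom{m}{i} u^{m+i}$ and keeping track of the suspension shift relating the $\cala$-action on $\Omega \field[u]$ to that on $\mathrm{coker}(Sq_0) \subset \field[u]$, the top operation applied to the generator of degree $2k$ corresponds to the class of $Sq^{2k} u^{2k+1} = \binom{2k+1}{2k} u^{4k+1} = u^{4k+1}$, which is again an odd power, hence nonzero. Thus $Sq_0 : \Phi \Omega \field [u] \rightarrow \Omega \field [u]$ sends the generator of degree $2k$ to a nonzero class of degree $4k$, and distinct values of $k$ land in distinct degrees, so $Sq_0$ is injective and $\Omega \field [u]$ is reduced.

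The only delicate point is this base-case computation: one must correctly handle the degree doubling in $Sq_0$ and the suspension relating $\Omega \field[u]$ to the cokernel, and verify that the binomial coefficient $\binom{2k+1}{2k}$ is odd. Everything else reduces to the machinery already available, namely the Künneth splitting of $H^*(V)$ and Lemma \ref{lem:omega-tensor-reduced}.
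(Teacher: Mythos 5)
Your proof is correct and follows essentially the paper's route: induction on $\dim V$ via Lemma \ref{lem:omega-tensor-reduced}, reducing to the rank-one case. The only difference is the base case, which the paper dispatches by invoking the standard isomorphism $\Omega \field[u] \cong \Phi \field[u]$ (making reducedness immediate since $\Phi$ preserves reduced modules), whereas you re-derive the same conclusion by an explicit $Sq_0$ computation on the cokernel of $Sq_0 : \Phi\field[u] \to \field[u]$; both are valid, and your calculation of $Sq^{2k}u^{2k+1} = u^{4k+1}$ correctly tracks the suspension shift.
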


\begin{proof}
 The unstable modules $H^* (\zed/2) \cong \field [u]$ and $\Omega H^* (\zed/2)
\cong \Phi \field [u]$ are reduced. The result follows by 
induction, using Lemma \ref{lem:omega-tensor-reduced}.
\end{proof}

\begin{prop}
\label{prop:omega-nilclosed}
If  $M$ is a nilclosed unstable module, then $\Omega M$
is reduced. 
\end{prop}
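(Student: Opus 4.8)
The plan is to reduce the statement to the already-established Lemma \ref{lem:omega-reduced-HV} by using an injective copresentation of $M$ together with left-exactness properties of $\Omega$. First I would recall that a nilclosed unstable module $M$ is in particular reduced, and that it admits an embedding into a reduced injective, namely a (possibly infinite) product of copies of $H^*(V)$'s for various finitely-generated elementary abelian $2$-groups $V$ (by the classification of injective unstable modules \cite{ls}, \cite[Theorem 3.14.1]{sch}, since the reduced injectives are exactly the retracts of products of the $H^*(V)$). Better, since $M$ is nilclosed, one can choose an injective copresentation $0 \to M \to I^0 \to I^1$ with $I^0, I^1$ reduced injective unstable modules; nilclosure guarantees that the cokernel $I^0/M$ embeds in $I^1$, so it too is reduced.

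The key point is that $\Omega = (-:\Sigma\field)$ is left exact on reduced unstable modules. Concretely, for a reduced module $N$ one has the short exact sequence $0 \to \Phi N \xrightarrow{Sq_0} N \to \Sigma\Omega N \to 0$, equivalently $\Omega_1 N = 0$; so applying $(-:\Sigma\field)$ to a short exact sequence of reduced modules whose terms all have vanishing first derived division functor yields again a short exact (in particular left exact) sequence. Applying this to $0 \to M \to I^0 \to I^0/M \to 0$ — all three terms reduced, and $I^0$, $I^0/M$ having their $\Omega$ reduced because they are reduced injective, resp. a submodule of a reduced injective, to which Lemma \ref{lem:omega-reduced-HV} and Lemma \ref{lem:omega-tensor-reduced} apply after writing $I^0$ as a retract of a product of $H^*(V)$'s — gives a left exact sequence $0 \to \Omega M \to \Omega I^0 \to \Omega(I^0/M)$. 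Thus $\Omega M$ is a sub-object of $\Omega I^0$, which is reduced; hence $\Omega M$ is reduced.

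The main obstacle is handling the passage from the $H^*(V)$ to an arbitrary reduced injective: one must know both that $\Omega$ commutes with products (it is a left adjoint? — no, it is left adjoint to $\Sigma\field\otimes-$, hence commutes with \emph{limits}, so in particular with products) and that being reduced is closed under products and retracts, which is immediate. So the only genuinely delicate step is verifying that $\Omega I^0$ is reduced when $I^0$ is an arbitrary reduced injective: since $I^0$ is a retract of $\prod_\alpha H^*(V_\alpha)$ and $\Omega$ preserves products and retracts, $\Omega I^0$ is a retract of $\prod_\alpha \Omega H^*(V_\alpha)$, each factor reduced by Lemma \ref{lem:omega-reduced-HV}, and a retract of a product of reduced modules is reduced. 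With that in hand the copresentation argument above closes the proof.
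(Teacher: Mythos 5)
The parenthetical where you argue that $\Omega$ commutes with products has the adjunction direction backwards, and this is a real gap. You write that $\Omega$ ``is left adjoint to $\Sigma\field\otimes-$, hence commutes with \emph{limits}, so in particular with products''; but a left adjoint preserves \emph{colimits}, not limits. There is no formal reason for $\Omega$ to commute with an infinite product, and this is precisely the sensitive point of your route: you need to control $\Omega I^0$ for an arbitrary reduced injective $I^0$, and the reduction to the $H^*(V)$'s rests entirely on this false claim.

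The step is repairable by systematically replacing ``product'' by ``direct sum'' and ``limit'' by ``colimit''. The classification of injective unstable modules realizes $I^0$ as a \emph{direct sum} of indecomposable reduced injectives, each a retract of some $H^*(V)$, so $I^0$ is a retract of a coproduct $\bigoplus_\alpha H^*(V_\alpha)$; since $\Omega$, being a left adjoint, does preserve coproducts and retracts, and a coproduct of reduced modules is reduced, one gets that $\Omega I^0$ is reduced, and your copresentation argument then closes. For comparison, the paper also exploits colimit-preservation of $\Omega$ but differently: it first reduces to the finitely-generated case (a nilclosed module is the filtered colimit of its finitely-generated nilclosed submodules), and then embeds such an $M$ into a reduced injective with reduced cokernel, appealing again to Lemma \ref{lem:omega-reduced-HV}; this keeps all injectives of finite type and avoids any appeal to infinite sums or products.
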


\begin{proof}
 The functor $\Omega$ is a left adjoint, hence commutes with colimits.
Moreover, the colimit of a diagram of reduced unstable modules is reduced. 
A nilclosed unstable module is the filtered colimit of its finitely-generated
nilclosed sub-modules, hence it is sufficient to prove the result in the case
that $M$ is finitely-generated.

Suppose that $M$ is a finitely-generated nilclosed unstable module. There
exists a finitely-generated elementary abelian $2$-group $V$ and  a short exact
sequence 
\[
 0 \rightarrow M \rightarrow H^* (V) \rightarrow Q \rightarrow 0
\]
of unstable modules, with $Q$ reduced, since $M$ is nilclosed. The unstable
module $\Omega_1 Q$ is trivial, since $Q$ is reduced, hence the functor $\Omega$
induces a monomorphism $\Omega M \rightarrow \Omega H^* (V)$. By Lemma
\ref{lem:omega-reduced-HV}, $\Omega H^* (V)$ is reduced, hence the result
follows.
\end{proof}

%%%%%%%%%%%%%%%%%%%%%%%%%%%%%%%%%%%%%%%%%%%%%%%%%%%%%%%%%%%%%%%%%%%%%%%%%%%
\subsection{The division functor $(-: \usquare)$ and the natural
transformation $\alpha$}
\label{sect:division}

\begin{defn}
\label{def:alpha}
 Let $\alpha : \Omega = (- : \Sigma \field) \rightarrow \redT = (- :
\overline{\field [u]}) $ be the natural transformation 
 induced on division functors by  the unique non-trivial morphism
$p : \overline{\field
[u]} \rightarrow \Sigma \field$. 
\end{defn}

\begin{nota}
Let $\usquare$ denote the kernel of $
\overline{\field[u]}
\stackrel{p}{\rightarrow} 
\Sigma \field,
$
which identifies as an object of $\field[u]\dash \unst$ with the ideal
of $\field[u]$ generated by $u^2$. 
\end{nota}

\begin{prop}
\label{prop:division}
 The division functor $(-: \usquare)$ is right exact and the left
derived functors $(-: \usquare)_n$ are trivial for $n >2$.
Moreover, there is a natural exact sequence 
\[
 0 
\rightarrow 
(M: \usquare)_1
\rightarrow 
\Omega M 
\stackrel{\alpha_M}{\rightarrow}
\redT M
\rightarrow
(M: \usquare)
\rightarrow 
0
\]
and a natural isomorphism $(-: \usquare)_2 \cong \Omega_1 M$.
In particular,  if $M$ is reduced, then $(M: \usquare)_2=0$.

If $M$ is nilclosed, then $(M: \usquare)_n=0$ for $n >0$,
hence $\alpha_M$ is a monomorphism.
\end{prop}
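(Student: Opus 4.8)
The plan is to deduce the whole statement from the defining short exact sequence
\[
0 \to \usquare \to \overline{\field[u]} \xrightarrow{\ p\ } \Sigma\field \to 0
\]
of unstable modules. Right-exactness of $(-:\usquare)$ is automatic, since it is left adjoint to the exact functor $\usquare\otimes-$, and $\unst$ has enough projectives, so the left derived functors $(-:\usquare)_n$ are defined. The basic tool is that any short exact sequence $0 \to N' \to N \to N'' \to 0$ of unstable modules induces, for every unstable module $M$, a natural long exact sequence
\[
\cdots \to (M:N'')_n \to (M:N)_n \to (M:N')_n \to (M:N'')_{n-1} \to \cdots \to (M:N')_0 \to 0 .
\]
To see this, fix a projective resolution $P_\bullet \to M$; for $P$ projective the functor $N \mapsto (P:N)$ sends short exact sequences of unstable modules to short exact sequences, since by adjunction $\hom_\unst((P:N),X) \cong \hom_\unst(P, N\otimes X)$ and $N \mapsto \hom_\unst(P, N\otimes X)$ is exact ($\hom_\unst(P,-)$ being exact and $-\otimes X$ being exact). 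Hence one obtains a short exact sequence of complexes $0 \to (P_\bullet : N'') \to (P_\bullet : N) \to (P_\bullet : N') \to 0$, whose long exact sequence in homology is the one asserted.

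Next I would apply this to the displayed sequence. Recall that $(-:\overline{\field[u]}) = \redT$ is exact, being a natural retract of the exact functor $T = (-:\field[u])$ (Lannes' $T$-functor for $\zed/2$) via the splitting $TM \cong M \oplus \redT M$, so $\redT_n = 0$ for $n \geq 1$; and that $(-:\Sigma\field) = \Omega$ satisfies $\Omega_n = 0$ for $n \geq 2$ (see \cite{sch}). The long exact sequence then reads
\[
\cdots \to \Omega_n M \to \redT_n M \to (M:\usquare)_n \to \Omega_{n-1} M \to \redT_{n-1} M \to \cdots ,
\]
and the map $\Omega M \to \redT M$ at the bottom is the one induced by $p$, hence equals $\alpha_M$ by Definition \ref{def:alpha}. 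For $n \geq 3$ both neighbours $\redT_n M$ and $\Omega_{n-1} M$ vanish, so $(M:\usquare)_n = 0$. For $n = 2$ the sequence collapses to $0 \to (M:\usquare)_2 \to \Omega_1 M \to 0$, giving the natural isomorphism $(M:\usquare)_2 \cong \Omega_1 M$; since $M$ is reduced if and only if $\Omega_1 M = 0$, this settles the vanishing in the reduced case. The tail, using $\redT_1 M = 0$, is exactly the four-term exact sequence $0 \to (M:\usquare)_1 \to \Omega M \xrightarrow{\alpha_M} \redT M \to (M:\usquare) \to 0$.

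There remains the nilclosed case, whose content is that $(M:\usquare)_1 = \ker \alpha_M$ vanishes. As left derived functors commute with filtered colimits and a nilclosed unstable module is the filtered colimit of its finitely generated nilclosed submodules, I may assume $M$ finitely generated and nilclosed. Then, as in the proof of Proposition \ref{prop:omega-nilclosed}, there is a short exact sequence $0 \to M \to H^*(V) \to Q \to 0$ with $V$ finitely generated and $Q$ reduced; the long exact sequence of the derived functors $(-:\usquare)_\bullet$ in the first variable supplies $(Q:\usquare)_2 \to (M:\usquare)_1 \to (H^*(V):\usquare)_1$, and $(Q:\usquare)_2 \cong \Omega_1 Q = 0$ since $Q$ is reduced, whence $(M:\usquare)_1 \hookrightarrow (H^*(V):\usquare)_1$.

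The statement is thus reduced to showing that $\alpha_{H^*(V)}$ is a monomorphism for $V$ finitely generated, and I expect this to be the main obstacle. I would handle it by induction on $\dim_\field V$: for $V = \zed/2$ one computes directly, using $\Omega H^*(\zed/2) \cong \Phi \field[u]$ and the identification of $\redT H^*(\zed/2)$ with $\field[u]$, that $\alpha_{\field[u]}$ is injective; for the inductive step one writes $H^*(V') \cong H^*(V) \otimes H^*(\zed/2)$ and combines the Künneth-type description of $\Omega(-\otimes-)$ from Lemma \ref{lem:loop-tensor} with the multiplicativity of $T$ (hence of $\redT$) to express $\alpha_{H^*(V')}$ through $\alpha_{H^*(V)}$ and $\alpha_{H^*(\zed/2)}$ and deduce its injectivity.
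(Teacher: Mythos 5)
Your treatment of the formal part (right exactness, the long exact sequence obtained from $0 \to \usquare \to \overline{\field[u]} \to \Sigma\field \to 0$, the vanishing for $n>2$, the four-term sequence, and $(M:\usquare)_2 \cong \Omega_1 M$) is correct and simply makes explicit what the paper dismisses as ``formal considerations''; the identification of the map $\Omega M \to \redT M$ with $\alpha_M$ via Definition~\ref{def:alpha} is also right.

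Where you diverge from the paper is the nilclosed case, and there your argument is incomplete at the step you yourself flag. Your reduction to $(M:\usquare)_1 \hookrightarrow (H^*(V):\usquare)_1$ via the sequence $0 \to M \to H^*(V) \to Q \to 0$ with $Q$ reduced is fine, as is the passage to finitely generated $M$. But the claim that $\alpha_{H^*(V)}$ is a monomorphism by ``Künneth induction'' is left entirely at the level of a plan: you do not verify that the natural maps $\Omega(A\otimes B) \to (\Omega A\otimes B)\oplus(A\otimes\Omega B)$ and $\redT(A\otimes B) \to (\redT A\otimes B)\oplus(A\otimes\redT B)$ actually commute with $\alpha$ (this holds because the partial-Künneth transformations $(M\otimes N:K) \to (M:K)\otimes N$ are natural in the parameter $K$, so precomposing with $p: \overline{\field[u]} \to \Sigma\field$ yields a commuting square, but this needs to be said), nor do you carry out the base case $\alpha_{\field[u]}: \Phi\field[u] \to \field[u]$. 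With these verified, your route closes, since Lemma~\ref{lem:loop-tensor} gives the injectivity of the source Künneth map and $\alpha_A\otimes B$, $A\otimes\alpha_B$ are injective by the inductive hypothesis. The paper instead avoids the Künneth bookkeeping altogether: using Proposition~\ref{prop:omega-nilclosed} it observes that $(M:\usquare)_1 \subset \Omega M$ is reduced, hence its vanishing can be tested in degree zero after applying $T_V$; since $T_V$ commutes with $(-:\usquare)_1$ and preserves nilclosedness, it suffices to check $\alpha_M$ is injective in degree zero, which is handled by the natural monomorphism of nilclosed modules $F(1)\otimes M^1 \hookrightarrow M$ and the fact that $F(1)\otimes M^1$ is \emph{projective}, so all higher derived division functors vanish on it. That reduction to a projective rather than to $H^*(V)$ is what makes the paper's proof close cleanly; yours can be completed but currently has a genuine hole at the Künneth compatibility.
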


\begin{proof}
 The first part of the statement follows from formal considerations,
using the exactness of $\redT$ and the vanishing of the left derived
functors $(- : \Sigma \field)_n $ for $n>2$. 

It remains to consider the case where $M$ is nilclosed. Proposition
\ref{prop:omega-nilclosed} implies that $\Omega M$ is reduced, hence the
subobject $(M:\usquare) _1$ is reduced. Thus, it is sufficient to
show that $T_V \big( (M:\usquare) _1 \big)$ is trivial in degree
zero, for each finitely-generated elementary abelian $2$-group $V$ and for every
nilclosed unstable module $M$. The functor $T_V$ commutes  with $(
-:\usquare) _1$ and preserves nilclosed unstable modules, hence it
is sufficient to show that $(M:\usquare) _1$ is trivial in degree
zero for every nilclosed unstable module $M$. Equivalently, it is sufficient to
show that $\alpha_M : \Omega M \rightarrow \redT M$ is a monomorphism in degree
zero. 

There is a natural isomorphism $(\Omega M) ^0 \cong M^1$.  Since $M$ is
nilclosed, the inclusion of the elements of
degree one induces a natural monomorphism $F(1)  \otimes M^1
\hookrightarrow M$ of nilclosed unstable modules,  where $M^1$ is
considered as an unstable module concentrated in degree zero. The
induced morphism $\Omega (F(1) \otimes M^1 ) \rightarrow \Omega M$ is a
monomorphism (since the cokernel of $F(1)  \otimes M^1
\hookrightarrow M$ is reduced), which is an isomorphism in degree
zero. 

Hence, by naturality, it suffices to prove that $\alpha_{F(1) \otimes M^1}$ is a
monomorphism. This follows formally since $F(1) \otimes M^1$ is projective.
(Alternatively, a direct calculation gives the result.)
\end{proof}

\begin{exam}
\label{exam:alpha-notinj}
The condition that $M$ be nilclosed in the final statement cannot be weakened to
reduced. Consider $M= \Phi F(1) $; the
unstable module $\Omega M$ is isomorphic to $\Sigma \field$ and the
morphism $\alpha_{\Phi F(1)} : \Omega \Phi F(1) \cong \Sigma \field \rightarrow
\redT \Phi F(1) \cong \field$ is
trivial.
\end{exam}

\begin{exam}
\label{exam:nilclosed-exact}
The functor $(-: \usquare)$ is not exact
when restricted to $\unstred$. There is a short exact sequence in $\unstred$:
\[
 0 
\rightarrow 
\Lambda^2 (F(1))
\rightarrow 
F(2) 
\rightarrow 
\Phi F(1)
\rightarrow
0
\]
in which the two nonzero left hand terms are nilclosed. By Example
\ref{exam:alpha-notinj}, $(\Phi F(1): \usquare)_1= \Sigma
\field$, hence  Proposition \ref{prop:division} shows that the division functor
$(-: \usquare)$ does not send this to a short exact sequence.
\end{exam}

%\bibliographystyle{amsalpha}
%\bibliography{singer.bib}
\providecommand{\bysame}{\leavevmode\hbox to3em{\hrulefill}\thinspace}
\providecommand{\MR}{\relax\ifhmode\unskip\space\fi MR }
% \MRhref is called by the amsart/book/proc definition of \MR.
\providecommand{\MRhref}[2]{%
  \href{http://www.ams.org/mathscinet-getitem?mr=#1}{#2}
}
\providecommand{\href}[2]{#2}

\end{document}